\newtheorem{thm}{Theorem}[section]
\newtheorem{lem}[thm]{Lemma}
\newtheorem{cor}[thm]{Corollary}
\newtheorem{pro}[thm]{Proposition}
\theoremstyle{definition}   
\newtheorem{defi}[thm]{Definition}
\newtheorem{ex}[thm]{Example}
\newtheorem{rmk}[thm]{Remark}
\newcommand{\nc}{\newcommand}
\newcommand{\delete}[1]{}
\nc{\mlabel}[1]{\label{#1}}  
\nc{\mcite}[1]{\cite{#1}}  
\nc{\mref}[1]{\ref{#1}}  
\nc{\meqref}[1]{\eqref{#1}}  
\nc{\mbibitem}[1]{\bibitem{#1}} 
\nc{\mlabel}[1]{\label{#1}{\hfill \hspace{1cm}{\tt{{\ }\hfill(#1)}}}}
\nc{\mcite}[1]{\cite{#1}{{\tt{{\ }(#1)}}}}  
\nc{\mref}[1]{\ref{#1}{{\tt{{\ }(#1)}}}}  
\nc{\meqref}[1]{\eqref{#1}{{\tt{{\ }(#1)}}}}  
\nc{\mbibitem}[1]{\bibitem[\tt #1]{#1}} 
\newcommand {\emptycomment}[1]{}
\newcommand{\bk}{{\mathbf{k}}}
\nc{\vep}{\varepsilon}
\nc{\oprn}{\theta}
\nc{\Oprn}{\Theta}
\nc{\tforall}{\ \ \text{for all }}
\nc{\calo}{\mathcal{O}}
\nc{\oop}{$\mathcal{O}$-operator\xspace}
\nc{\oops}{$\mathcal{O}$-operators\xspace}
\nc{\mrho}{{\bm{\varrho}}}
\nc{\emk}{\mathbf{K}}
\nc{\invlim}{\displaystyle{\lim_{\longleftarrow}}\,}
\nc{\ot}{\otimes}
\newcommand{\co}{\mathsf{cosh}}
\newcommand{\lon }{\,\rightarrow\,}
\newcommand{\be }{\begin{equation}}
\newcommand{\ee }{\end{equation}}
\newcommand{\g}{\mathfrak g}
\newcommand{\h}{\mathfrak h}
\newcommand{\huaB}{\mathcal{B}}
\newcommand{\huaF}{\mathcal{F}}
\newcommand{\huaU}{\mathcal{U}}
\newcommand{\huaY}{\mathcal{Y}}
\newcommand{\huaQ}{\mathcal{Q}}
\newcommand{\huaP}{\mathcal{P}}
\nc{\calp}{\mathcal{P}}
\newcommand{\huaH}{\mathcal{H}}
\newcommand{\huaO}{{\mathcal{PT}}}
\newcommand{\huaT}{\mathcal{T}}
\newcommand{\Id}{{\rm{Id}}}
\newcommand{\br}[1]{   [ \cdot,    \cdot  ]   }
\newcommand{\dM}{\mathrm{d}}
\newcommand{\Hom}{\mathrm{Hom}}
\newcommand{\Der}{\mathrm{Der}}
\newcommand{\PG}{\mathsf{PG}}
\newcommand{\BCK}{\mathrm{BCK}}
\newcommand{\LB}{\mathrm{LB}}
\newcommand{\FF}{\mathsf{F}}
\newcommand{\GG}{\mathsf{G}}
\newcommand{\alg}{\mathsf{alg}}
\newcommand{\BCH}{\mathsf{BCH}}
\newcommand{\BS}{\mathsf{NBS}}
\nc{\NBS}{\mathsf{NBS}}
\newcommand{\BG}{\mathsf{BG}}
\newcommand{\SLB}{\mathsf{SLB}}
\newcommand{\Ad}{\mathrm{Ad}}
\newcommand{\Aut}{\mathrm{Aut}}
\newcommand{\gl}{\mathfrak {gl}}
\newcommand{\End}{\mathrm{End}}
\newcommand{\fVec}{\mathsf{fVect}}
\newcommand{\cVec}{\mathsf{cVect}}
\nc{\CV}{\mathbf{C}}
\begin{document}

\title[Post-groups, (Lie-)Butcher groups and the Yang-Baxter equation]{Post-groups, (Lie-)Butcher groups and the Yang-Baxter equation}

\author{Chengming Bai}
\address{Chern Institute of Mathematics and LPMC, Nankai University,
Tianjin 300071, China}
\email{baicm@nankai.edu.cn}

\author{Li Guo}
\address{Department of Mathematics and Computer Science,
         Rutgers University,
         Newark, NJ 07102}
\email{liguo@rutgers.edu}

\author{Yunhe Sheng}
\address{Department of Mathematics, Jilin University, Changchun 130012, Jilin, China}
\email{shengyh@jlu.edu.cn}

\author{Rong Tang}
\address{Department of Mathematics, Jilin University, Changchun 130012, Jilin, China}
\email{tangrong@jlu.edu.cn}


\begin{abstract}
The notions of a post-group and a pre-group are
introduced as a unification and enrichment of several group
structures appearing in diverse areas from numerical integration
to the Yang-Baxter equation.
First the Butcher group from numerical integration on Euclidean spaces and the $\huaP$-group of an operad $\huaP$ naturally admit a pre-group structure. Next a relative Rota-Baxter operator on a group naturally splits the group structure to a post-group structure. Conversely, a post-group gives rise to a relative Rota-Baxter operator on the sub-adjacent group. Further a post-group gives a braided group and a solution of the Yang-Baxter equation. Indeed the category of post-groups is isomorphic to the category of braided groups and the category of skew-left braces. Moreover a post-Lie group differentiates to a post-Lie algebra structure on the vector space of left invariant vector fields, showing that post-Lie groups are the integral objects of post-Lie algebras. Finally, post-Hopf algebras and post-Lie Magnus expansions are utilized to study the formal integration of post-Lie algebras. As a byproduct, a post-group structure is explicitly determined on the Lie-Butcher group from numerical integration on manifolds.

\end{abstract}

\subjclass[2010]{
22E60, 
16T25, 
17B38, 
65L99, 
16T05, 
18M60
}

\keywords{post-group, pre-group, (Lie-)Butcher group,  operad, relative Rota-Baxter operator, Yang-Baxter equation, skew-left brace, post-Lie algebra, formal integration}

\maketitle

\vspace{-0.6cm}

\tableofcontents

\allowdisplaybreaks

\section{Introduction}\mlabel{sec:intr}

Several group structures have arisen from diverse areas, from
(Lie-)Butcher groups in numerical integration and $\calp$-groups
of operads, to braided groups and braces for the Yang-Baxter
equation. We show that these groups have a common enrichment to what we call a post-group or
a pre-group. Differentiation of post-Lie groups gives post-Lie algebras, providing a linear structure for these group structures.
\vspace{-.2cm}

\subsection{(Lie-)Butcher groups from numerical integration}
The purpose of  numerical integration algorithms on the Euclidean space $\mathbb R^n$ is to find approximate solutions of the systems of differential equations
\vspace{-.3cm}
\begin{eqnarray} \notag
\frac{\dM y_i(x)}{\dM x}=f_i(y_1(x),\ldots,y_n(x)),\,\,\,\,y_i(x_0)=y_i{_0},\,\,\,\,i=1,2,\ldots,n.
\end{eqnarray}
In his algebraic study of the Runge-Kutta method in numerical
integration algorithms, Butcher \mcite{Bu} invented  an infinite-dimensional real Lie group, which is
called the Butcher group \mcite{HW} and plays an important role in
the  study of algebraic order conditions \mcite{CHV} of the
Runge-Kutta method. As a striking coincidence, the Butcher group
is also the character group of the Connes-Kreimer Hopf algebra
\mcite{CK} of the rooted trees in their study of renormalization
of quantum field theory. The explicit relationship between the
Butcher group and the renormalization in quantum field theory was
interpreted in \mcite{Br}. A similar group structure is the
$\calp$-group of an operad $\calp$~\mcite{CL2}.

To study the approximate solutions of differential equations on a homogeneous space,
Munthe-Kaas introduced the Lie-Butcher series and the Lie-Butcher group in \mcite{Mu,MF}.
Moreover, the Lie-Butcher group is the character group of the Hopf algebra \mcite{MW} of the planar rooted trees.
\vspace{-.2cm}

\subsection{The Yang-Baxter equation, skew-left braces and relative Rota-Baxter operators}
The Yang-Baxter equation grew out of Yang's study of exact
solutions of many-body problems \mcite{Ya} and Baxter's work of
the eight-vertex model \mcite{Bax}.  The equation is closely related to
many fields in mathematics and physics, in particular,
quasi-triangular Hopf algebras and quantum groups~\mcite{CPr}.
Due to the complexity in solving the Yang-Baxter equation, Drinfeld suggested to study  set-theoretical solutions of the Yang-Baxter equation \mcite{Dr}. Then Etingof-Schedler-Soloviev \mcite{ESS} and  Lu-Yan-Zhu \mcite{LYZ} studied the structure groups of such solutions.
Recently, the finite quotients of these structure groups were studied in \cite{LV}. See the comprehensive review \mcite{Ta} on the classification of set-theoretical solutions of the Yang-Baxter equation.

To construct solutions of the Yang-Baxter equation, braces were introduced by Rump~\mcite{Ru} as a generalization of radical rings. Further studies were carried out in~\cite{CJO2,CJO,G,Sm18}.
Recently, braces were generalized to skew-left braces by Guarnieri-Vendramin \mcite{GV} to construct
non-degenerate and not necessarily involutive  solutions of the Yang-Baxter equation. The nilpotency and factorization of skew-left braces were investigated in \mcite{CSV,JKVV-1}. Moreover, the radical and weight of skew-left braces were applied to study the structure groups of solutions of the Yang-Baxter equation \mcite{JKVV}.

The concepts of (relative) Rota-Baxter operators on groups were introduced in \mcite{GLS} in their studies of integration of (relative) Rota-Baxter operators on Lie algebras, which are the operator form of the classical Yang-Baxter equation~\mcite{STS}.
More recently, Bardakov-Gubarev studied the relationship between Rota-Baxter operators on groups and  skew-left braces, and constructed  solutions of the Yang-Baxter equation by means of Rota-Baxter operators on groups \mcite{BG}.
\vspace{-.2cm}

\subsection{Post-Lie algebras}
Post-Lie algebras  were introduced by Vallette \mcite{Val} from his study of Koszul duality of operads.
Munthe-Kaas and his coauthors found that post-Lie algebras also naturally appear in differential geometry and numerical integration on  manifolds \mcite{ML}.
Recently, post-Lie algebras are studied from different aspects including constructions of nonabelian generalized Lax pairs,  classifications of post-Lie algebras on certain Lie algebras, Poincar\'e-Birkhoff-Witt theorems, factorization theorems and regularity structures in singular stochastic partial differential equations \mcite{BGN,Bur,BK,Dotsenko,EMM}.

A pre-Lie algebra is a post-Lie algebra in which the Lie algebra is abelian and is important on its own right~\mcite{Ba,Bur-1,Ma}. The relation between pre-Lie algebras (pre-Lie rings) and braces is a fundamental problem and attracts sustained interest.  Rump first gave some connections between $\mathbb R $-braces and pre-Lie algebras in \cite{Ru14}. A simple algebraic formula  for the correspondence between finite right nilpotent $\mathbb F_p$-braces and finite nilpotent pre-Lie algebras  over the field $\mathbb F_p$ is obtained by Smoktunowicz \mcite{Sm22a}, which  agrees with the correspondence using Lazard's correspondence between finite  $\mathbb F_p$-braces and pre-Lie algebras proposed by Rump in \cite{Ru14}. Iyudu obtained a graded structure from a filtered brace in \mcite{Iy}, which turns out to be a pre-Lie algebra.
Integration of pre-Lie algebras was studied by
Dotsenko-Shadrin-Vallette \mcite{DSV} in the deformation theory and twisting procedure of algebraic structures.
Moreover, Mencattini-Quesney-Silva \mcite{MQS} used the framework
of $D$-algebras to study the integration problems of post-Lie
algebras. However, no group structures were given for pre-Lie
algebras or post-Lie algebras directly.
\vspace{-.2cm}

\subsection{Tying together by post-(Lie) groups}
In this paper, we show that all the aforementioned important group structures have a common enrichment to the notions of a post-group and, as a special case, a pre-group. The notions have a second multiplication in addition to the default group operation, thus promising finer understanding of the structures. As the name suggests, post-Lie algebras are the linear structures via differentiation for post-Lie groups, thereby providing the other groups with a linear structure which is otherwise unavailable.

First of all, the  Butcher group and the $\huaP$-group of an operad $\huaP$ have  natural pre-group structures.
Further a relative Rota-Baxter operator  on  groups gives rise to a matched pairs of groups as well as a post-group. Conversely, a post-group also gives rise to a relative Rota-Baxter operator on the sub-adjacent group. Consequently, a post-group as well as a relative Rota-Baxter operator, gives a solution of the Yang-Baxter equation. To be more specific, we obtain an explicit solution of the  Yang-Baxter equation on the Butcher group.
Post-groups also build a bridge between  Rota-Baxter operators and  skew-left braces. Moreover, we prove that the category of post-groups is isomorphic to the category of skew-left braces.

Post-Lie groups are the global objects corresponding to post-Lie algebras, in the sense that one obtains a post-Lie algebra from a post-Lie group via differentiation. To further justify the terminology of post-groups, we introduce the notion of a connected complete post-Lie algebra, and show that any connected complete post-Lie algebra can be formally integrated to a post-group by using connected complete post-Hopf algebras and post-Lie Magnus expansions.  Whether a post-Lie algebra can always be integrated to a post-Lie group, i.e. whether Lie's Third Theorem holds for post-Lie algebras, remains an open problem that is worth studying in the future.

In short, post- and pre-groups provide a richer structure for the (Lie-)Butcher groups and $\calp$-groups, and give a purely algebraic structure for relative Rota-Baxter operators, braided groups and skew-left braces.
These connections can be illustrated in the following diagram.
\vspace{-.2cm}

\begin{equation}
    \label{eq:bigdiag}
    \begin{split}
        \xymatrix@1{
            {\begin{subarray}{c} \text{skew-left}\\ \text{braces}
            \end{subarray}} \ \  \ar@2{<->}[r]^{\rm Thm~\mref{functor}}& \ \ {\begin{subarray}{c} \text{\ \ } \\  \text{post-} \\ \text{groups} \end{subarray}}\ \   \ar@<.3ex>[r]^{\rm Prop~ \mref{post-group-to-RB}}
\ar@<4pt> `u[r] `[rrr]|{\text{Prop~\mref{pgybe}}} [rrr]
            \ar_{\text{Thm~\mref{thm:diffpL}}}[d]&
            \ \ {\begin{subarray}{c} \text{Relative} \\\text{Rota-Baxter}\\ \text{operators}
            \end{subarray}}\ \  \ar@<.4ex>[l]^{\rm {Thm}~\mref{thm:RBPost}}
            \ar_{\rm {Prop}~\mref{rrbmp}}[r]
            \ar_{\text{Thm~\mref{thm:diffRB}}}[d]& \ \ {\begin{subarray}{c} \text{\ \ } \\ \text{matched} \\ \text{pairs}\end{subarray}} \ \ \ar@{.>}^{\text{ \  Defn~\mref{de:bg}}}[r] &\ \  {\begin{subarray}{c} \text{braided} \\ \text{groups}\  \end{subarray}}
            \ar@<.5ex>[r]^{\rm Thm~\mref{bgybe}}
\ar@<4pt> `u[l] `[lll]|{\text{Prop~\mref{ybepg}}} [lll]
& \ \ \  {\begin{subarray}{c} \text{non-deg} \\ \text{braided sets} \end{subarray}} \ar@<.4ex>[l]^{\rm {Thm}~\mref{th:bg}}
            \\
            & {\begin{subarray}{c} \text{post-Lie} \\ \text{algebras} \end{subarray}}\ar@<.3ex>[r]^{\rm } & {\begin{subarray}{c} \text{relative} \\ \text{Rota-Baxter} \\ \text{operators} \\
                    \text{on Lie algebras} \end{subarray}} \ar@<.4ex>[l]^{}&
                & &
        }
    \end{split}
\end{equation}
Here the dotted arrow means extra conditions are needed for the implication.
There is not yet a suitable direct linear structure for any of the skew-left braces, braided groups or non-degenerated braided sets in the first row of the diagram. Thus another advantage of the post-group is that through post-Lie algebra, it provides the group objects in the first row with a linear structure.

\subsection{Outline of the paper}
As a brief outline, in Section \mref{post-group},  we introduce the notion of  post-groups, which contain pre-groups as special cases. The Butcher group and the $\huaP$-group of an operad $\huaP$ naturally enjoy pre-group structures. In Section~\mref{sec:related}, we establish the connections of post-groups with relative Rota-Baxter operators on groups, set-theoretical solutions of the Yang-Baxter equation and skew-left braces.
In Section \mref{post-group-diff}, the notion of post-Lie groups is obtained by adding smooth structures on post-groups, and is shown to give a post-Lie algebra via differentiation. In Section \mref{post-group-inte}, we give the notion of a complete post-Lie algebra, which is a generalization of a nilpotent post-Lie algebra. Moreover, we use complete post-Hopf algebras and the post-Lie Magnus expansion to establish the formal integration theory of connected complete post-Lie algebras. As an application, we give explicit descriptions of the Lie-Butcher groups.

\smallskip
\noindent
{\bf Notations. } In this paper we fix a field $\bk$ of characteristic zero as the base field over which to take vector spaces, linear maps and tensor products, unless otherwise specified.

\vspace{-.1cm}

\section{Post-groups, pre-groups, Butcher groups and operads}\mlabel{post-group}

In this section, we first introduce the notion of a post-group, which gives rise to a new group, called the sub-adjacent group. A pre-group is defined as an abelian post-group. Then we show that there are underlying pre-groups of Butcher groups. Finally we show that the $\huaP$-group associated to an operad $\huaP$ also admits a pre-group structure.

\vspace{-.1cm}

\subsection{Post-groups and pre-groups}
We begin with our basic notion.
\begin{defi}
A {\bf post-group} is a group $(G,\cdot)$ equipped with a multiplication $\rhd$ on $G$ such that
\begin{enumerate}
\item for each $a\in G$, the left multiplication $$L^\rhd_a:G\to G, \quad L^\rhd_a b= a\rhd b \tforall b\in G,$$
is an automorphism of the group $(G,\cdot)$, that is,
\begin{equation}
    \mlabel{Post-2}a\rhd (b\cdot c)=(a\rhd b)\cdot(a\rhd c)\tforall a, b, c\in G;
\end{equation}
\item the following ``weighted" associativity holds,
    \begin{equation}
        \mlabel{Post-4}\big(a\cdot(a\rhd b)\big)\rhd c=a\rhd (b\rhd c)\tforall a, b, c\in G.
    \end{equation}
\end{enumerate}
 \end{defi}

Then we first have the following basic properties.
\begin{lem} Let $e$ be the unit for $\cdot$ in a post-group $(G,\cdot,\rhd)$. Then for all $a\in G$,  we have
  \begin{eqnarray}\mlabel{Post-1}a\rhd e&=&e,\\
 \mlabel{Post-3}e\rhd a&=&a.
\end{eqnarray}
\end{lem}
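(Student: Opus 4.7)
The proof proposal is as follows.

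For the first identity $a\rhd e=e$, the plan is to invoke condition (i) of the definition directly: since $L^\rhd_a:(G,\cdot)\to(G,\cdot)$ is a group automorphism by hypothesis, it must in particular preserve the identity element, so $L^\rhd_a(e)=e$, i.e.\ $a\rhd e=e$. This is immediate and requires no computation.

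For the second identity $e\rhd a=a$, the plan is to exploit the weighted associativity \eqref{Post-4} by specializing the variables in order to collapse one side of the equation. Specifically, I would set $b=e$ in \eqref{Post-4} to obtain
\[
\bigl(a\cdot(a\rhd e)\bigr)\rhd c \;=\; a\rhd(e\rhd c)\tforall a,c\in G.
\]
Using the identity $a\rhd e=e$ just established, the left-hand side simplifies as $a\cdot(a\rhd e)=a\cdot e=a$, so the equation becomes $a\rhd c=a\rhd(e\rhd c)$, i.e.\ $L^\rhd_a(c)=L^\rhd_a(e\rhd c)$. Now I would apply the injectivity of the automorphism $L^\rhd_a$ (guaranteed again by condition (i)) to cancel $L^\rhd_a$ from both sides, yielding $c=e\rhd c$ for every $c\in G$. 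Relabeling $c$ as $a$ gives the desired $e\rhd a=a$.

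The argument is short and purely formal; there is no real obstacle. The only point that deserves care is the order of the two parts: the simplification in the second step genuinely uses the already-proved identity $a\rhd e=e$, so the two identities must be established in the order \eqref{Post-1} then \eqref{Post-3}. An alternative route for the second identity, which I would mention if brevity were preferred, is to specialize $a=b=e$ in \eqref{Post-4} instead, obtaining $e\rhd c=e\rhd(e\rhd c)$, and then cancel $L^\rhd_e$; this avoids introducing the auxiliary variable $a$ but is logically equivalent.
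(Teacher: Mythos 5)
Your proposal is correct and follows essentially the same route as the paper: the first identity is immediate from $L^\rhd_a$ being a group automorphism, and the second comes from specializing the weighted associativity \eqref{Post-4}, using \eqref{Post-1}, and cancelling an injective left multiplication. Your "alternative route" (taking $a=b=e$) is in fact exactly the specialization the paper uses; your main version with general $a$ and $b=e$ is an equally valid, trivially different instance of the same argument.
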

\begin{proof}
\meqref{Post-1} follows since, for each $a\in G$, $L^\rhd_a$ is an automorphism of the group $(G,\cdot)$.

\meqref{Post-4} gives
$$
e\rhd (e\rhd a)=\big(e\cdot(e\rhd e)\big)\rhd a=e\rhd a\tforall a\in G.
$$
Since $L^\rhd_e$ is also bijective, it gives $e\rhd a=a.$
\end{proof}

Examples of post-groups will be provided later, by (Lie-)Butcher groups, operads, relative Rota-Baxter operators as well as skew-left braces.

\begin{defi}
A {\bf homomorphism} of post-groups  from $(G,\cdot_G,\rhd_G)$ to $(H,\cdot_H,\rhd_H)$ is a map $\Psi:G\to H$ that preserves the operations $\cdot$ and $\rhd$:
\begin{equation}
\mlabel{Post-homo-1}\Psi(a\cdot_G b)=\Psi(a)\cdot_H \Psi(b), \quad
\Psi(a\rhd_G b)=\Psi(a)\rhd_H \Psi(b)\tforall a, b\in G.
\end{equation}
\end{defi}

Post-groups and their homomorphisms form a category, denoted by $\PG$. Other than the group $(G,\cdot)$, a
post-group naturally gives rise to another group on the underlying set.

\begin{thm}\mlabel{pro:subad}
Let $(G,\cdot,\rhd)$ be a post-group. Define $\circ:G\times G\to G$     by
\begin{eqnarray}
\mlabel{eq:subad-com}a\circ b&=&a\cdot(a\rhd b)\tforall a,b\in G.
\end{eqnarray}
\begin{enumerate}
    \item \mlabel{it:subad1}
Then $(G,\circ)$ is a group with $e$ being the unit, and the inverse map $\dagger:G\to G$ given by
\begin{eqnarray}  \notag
 \mlabel{eq:subad-inv}a^\dagger&:=&(L^\rhd_a)^{-1}(a^{-1}) \tforall a\in G.
\end{eqnarray}
The   group $G_\rhd:=(G,\circ)$ is called the {\bf sub-adjacent  group} of the post-group $(G,\cdot,\rhd)$.
\item The left multiplication $L^\rhd:G\to \Aut(G)$ is an action of the group $(G,\circ)$  on the group $(G,\cdot)$.
\mlabel{it:subad2}
\item
    Let $\Psi: (G,\cdot_G,\rhd_G) \to (H,\cdot_H,\rhd_H)$ be a  homomorphism of post-groups. Then  $\Psi$ is a homomorphism of the sub-adjacent groups from $(G,\circ_G)$ to $(H,\circ_H)$.
\mlabel{it:subad3}
\end{enumerate}
\end{thm}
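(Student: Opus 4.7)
The plan is to verify the three assertions in order, drawing only on distributivity \eqref{Post-2}, weighted associativity \eqref{Post-4}, and the derived identities $a\rhd e = e$ and $e\rhd a = a$ from \eqref{Post-1} and \eqref{Post-3}.

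For part (i), the heart of the matter is associativity of $\circ$. Expanding $(a\circ b)\circ c = (a\cdot(a\rhd b))\cdot((a\cdot(a\rhd b))\rhd c)$ and rewriting the last factor by \eqref{Post-4} as $a\rhd(b\rhd c)$ reduces the left side to $a\cdot(a\rhd b)\cdot(a\rhd(b\rhd c))$. Pulling $a\rhd(-)$ back out by \eqref{Post-2} recollects this as $a\cdot\bigl(a\rhd(b\cdot(b\rhd c))\bigr)= a\circ(b\circ c)$. The two-sided unit property $e\circ a = a = a\circ e$ is immediate from $a\rhd e=e$ and $e\rhd a=a$. For the inverse, the definition $a^\dagger=(L^\rhd_a)^{-1}(a^{-1})$ gives $a\rhd a^\dagger=a^{-1}$, hence $a\circ a^\dagger = a\cdot a^{-1}=e$; since an associative monoid with two-sided identity in which every element has a right inverse is automatically a group, this suffices.

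For part (ii), the homomorphism property $L^\rhd_{a\circ b}=L^\rhd_a\circ L^\rhd_b$ is exactly the content of weighted associativity after unfolding: $(a\circ b)\rhd c = (a\cdot(a\rhd b))\rhd c = a\rhd(b\rhd c)$. Combined with $L^\rhd_e=\Id_G$ and the standing hypothesis that each $L^\rhd_a\in\Aut(G,\cdot)$, this exhibits $L^\rhd$ as an action of $(G,\circ)$ on $(G,\cdot)$ by group automorphisms. Part (iii) is a direct unfolding: $\Psi(a\circ_G b)=\Psi\bigl(a\cdot_G(a\rhd_G b)\bigr)=\Psi(a)\cdot_H(\Psi(a)\rhd_H\Psi(b))=\Psi(a)\circ_H\Psi(b)$, where the middle equality uses the two clauses of \eqref{Post-homo-1} in turn.

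The only step with any subtlety is the inverse in (i): rather than verify $a^\dagger\circ a=e$ directly, I plan to appeal to the standard monoid lemma and be done. If a direct argument is preferred, one can instead apply the automorphism $L^\rhd_a$ to both sides of the desired identity $a^\dagger\rhd a=(a^\dagger)^{-1}$ and use \eqref{Post-4} together with the fact that $L^\rhd_a$ preserves inverses to reduce both sides to $a$, concluding by injectivity of $L^\rhd_a$. Every remaining verification is a routine substitution, and no tool beyond the two post-group axioms is needed.
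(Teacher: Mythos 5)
Your proposal is correct and follows essentially the same route as the paper: the same expansion of $(a\circ b)\circ c$ via \eqref{Post-4} and recollection via \eqref{Post-2} for associativity, the same verification that $a\rhd a^\dagger=a^{-1}$ gives a right inverse followed by the standard semigroup lemma, the same reading of \eqref{Post-4} as $L^\rhd_{a\circ b}=L^\rhd_a\circ L^\rhd_b$ for part (ii), and the same one-line unfolding for part (iii). The only cosmetic difference is that you also check the left unit $e\circ a=a$, which the paper omits since the right-unit/right-inverse lemma already suffices.
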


\begin{proof}
\meqref{it:subad1}
For all $a,b,c\in G$, we have
\begin{eqnarray*}
(a\circ b)\circ c=(a\cdot(a\rhd b))\circ c=(a\cdot(a\rhd b))\cdot ((a\cdot(a\rhd b))\rhd c)\stackrel{\meqref{Post-4}}{=}(a\cdot(a\rhd b))\cdot (a\rhd (b\rhd c)).
\end{eqnarray*}
On the other hand, we have
\begin{eqnarray*}
a\circ (b\circ c)=a\circ (b\cdot(b\rhd c))=a\cdot (a\rhd(b\cdot(b\rhd c)))
\stackrel{\meqref{Post-2}}{=}a\cdot ((a\rhd b)\cdot (a\rhd(b\rhd c))).
\end{eqnarray*}
Thus the associativity of $\circ$ follows from the associativity of $\cdot$.

For $a\in G$, we have $ a\circ e=a\cdot (a\rhd
e)\stackrel{\meqref{Post-1}}{=}a. $ Thus, we deduce that $e$ is a
right unit of the multiplication $\circ$. Since
\begin{eqnarray}\mlabel{reversible}
a\rhd a^\dagger=L_a^\rhd (L_a^\rhd)^{-1}(a^{-1})=a^{-1}\tforall a\in G,
\end{eqnarray}
we obtain
$
a\circ a^\dagger=a\cdot (a\rhd a^\dagger)=e,
$
showing that $a^\dagger$ is a  right inverse of $a$. Thus $(G,\circ)$ is a group.

\noindent
\meqref{it:subad2}
Then \meqref{Post-4} indicates that $L^\rhd:G_\rhd\to \Aut(G)$ is a homomorphism of groups.

\noindent\meqref{it:subad3}
    The conclusion follows from
\begin{eqnarray}\label{BG-homo-1}
\hspace{1cm}        \Psi(a\circ_G b)\stackrel{\meqref{eq:subad-com}}{=}\Psi\big(a\cdot_G(a\rhd_G b)\big)\stackrel{\meqref{Post-homo-1}}{=}\Psi(a)\cdot_H\big(\Psi(a)\rhd_H \Psi(b)\big)=\Psi(a)\circ_H \Psi(b). \hspace{1.5cm} \qedhere
    \end{eqnarray}
\end{proof}

A post-Lie algebra reduces to a pre-Lie algebra if the Lie algebra is abelian. From this perspective, we introduce the notion of a pre-group as an abelian post group.

\begin{defi}
A post-group    $(G,\cdot,\rhd)$ is called a {\bf pre-group} if the group  $(G,\cdot)$ is an abelian group.
Then the group $(G,\circ)$ is again called the {\bf sub-adjacent group} of the pre-group.
\end{defi}

\subsection{Pre-groups and Butcher groups}

We now show that the Butcher groups are the sub-adjacent groups of certain pre-groups.

Let $\huaT$ be the set of isomorphism classes of rooted trees:
\[
    \huaT= \Big\{\begin{array}{c}
        \scalebox{0.6}{\ab}, \scalebox{0.6}{\aabb},
        \scalebox{0.6}{\aababb}, \scalebox{0.6}{\aaabbb},\scalebox{0.6}{\aabababb},
        \scalebox{0.6}{\aaabbabb}=
        \scalebox{0.6}{\aabaabbb}, \scalebox{0.6}{\aaababbb}, \scalebox{0.6}{\aaaabbbb},\scalebox{0.6}{\aababababb},\scalebox{0.6}{\aaabbababb}=\scalebox{0.6}{\aabaabbabb}=\scalebox{0.6}{\aababaabbb},\scalebox{0.6}{\aaababbabb}=\scalebox{0.6}{\aabaababbb},\scalebox{0.6}{\aaabbaabbb},\scalebox{0.6}{\aaabababbb},\scalebox{0.6}{\aaaabbbabb}=\scalebox{0.6}{\aabaaabbbb},\scalebox{0.6}{\aaaababbbb},\scalebox{0.6}{\aaaabbabbb}=\scalebox{0.6}{\aaabaabbbb},\scalebox{0.6}{\aaaaabbbbb},\ldots
            \end{array}
            \Big\}.
\]
A rooted forest is a concatenation of trees. So the rooted forests together with the empty tree form the free monoid generated by $\huaT$. A {\bf cut} $c$ of a tree $\omega$ is a subset of the edges of the tree. We denote  by $\omega \setminus c$ the forest that remains when the edges of $c$ are removed from the tree $\omega$. The cut $c$ is called {\bf admissible} if any oriented
path in the tree meets at most one cut edge. For such an admissible cut $c$, the tree of $\omega \setminus c$
that contains the root of $\omega$ is denoted by $R^c(\omega)$ and the collection of the other
rooted trees  of $\omega \setminus c$ is denoted by $P^c(\omega)$. Moreover, we denote by $C(\omega)$ the set of all cuts of $\omega$ and $AC(\omega)$  the set of all admissible cuts of $\omega$ respectively.

Let $\huaH_{\BCK}$ be the free unitary $\bk$-commutative algebra generated by $\huaT$. The well-known {\bf Butcher-Connes-Kreimer Hopf algebra} \mcite{Bu,CK} is the algebra $\huaH_{\BCK}$  equipped with

\begin{itemize}
  \item the coproduct $\Delta_{\BCK}:\huaH_{\BCK}\lon \huaH_{\BCK}\otimes \huaH_{\BCK}$ defined by
\vspace{-.1cm}
\begin{eqnarray}\mlabel{co-product}
\Delta_{\BCK}(\omega)=\omega\otimes 1+\sum_{c\in AC(\omega)}P^c(\omega)\otimes R^c(\omega) \tforall \omega\in \huaT,
\vspace{-.1cm}
\end{eqnarray}
and extended to a unital algebra homomorphism,
\item the counit  $\varepsilon:\huaH_{\BCK}\lon \bk$ defined by
$
\varepsilon(\omega)=0, $ for all $\omega\in \huaT,
$
and extended to a unital algebra homomorphism.
\item  the antipode $S:\huaH_{\BCK}\lon \huaH_{\BCK}$, which  in this case is an algebra homomorphism,  defined by
\vspace{-.2cm}
\begin{eqnarray}\mlabel{antipode}
S(\omega)=\sum_{c\in C(\omega)}(-1)^{|c|+1}\omega \setminus c \tforall \omega\in \huaT,
\vspace{-.1cm}
\end{eqnarray}
where $|c|$ is the cardinality of the cut $c$.
\end{itemize}

Let $A$  be a unitary $\bk$-commutative algebra. We set $\huaT^+=\huaT\cup \{\emptyset\}$ and denote by
\vspace{-.1cm}
$$\huaB_{A}\coloneqq \{a:\huaT^+\lon A\,|\,a(\emptyset)=1\}.$$
We define an abelian group structure on $\huaB_{A}$ as follows:
\begin{enumerate}\item
({\em group multiplication}) for $a,b\in \huaB_{A}$, \begin{eqnarray*}
(a\cdot b)(\emptyset)=1,~(a\cdot b)(\omega)=a(\omega)+b(\omega) \tforall \omega\in \huaT,
\end{eqnarray*}
\item ({\em identity element}) $ e(\emptyset)=1,~e(\omega)=0,$ for
all $\omega\in \huaT, $
\item ({\em inverse element}) for $a\in \huaB_{A}$, $
a^{-1}(\emptyset)=1,~a^{-1}(\omega)=-a(\omega),$ for all
$\omega\in \huaT. $
\end{enumerate}
Moreover, for all $a,b\in \huaB_{A}$,  define a multiplication $\rhd:\huaB_{A}\times \huaB_{A}\lon \huaB_{A}$ by
\begin{eqnarray}
\mlabel{butcher-pre-1}(a\rhd b)(\emptyset)&=&1,\\
\mlabel{butcher-pre-2}(a\rhd b)(\omega)&=&\sum_{c\in AC(\omega)}a(P^c(\omega))b(R^c(\omega)), \text{with}\,\, a(P^c(\omega))=\prod_{\tau_i\in P^c(\omega)}a(\tau_i)
 \tforall \omega\in \huaT.
\end{eqnarray}
\vspace{-.5cm}
\begin{thm}\mlabel{thm:butchergp}
  With the above notations, the triple $(\huaB_{A},\cdot,\rhd)$  is a  pre-group, whose sub-adjacent group is exactly the  character group $\Hom_{\alg}(\huaH_{\BCK}, A)$.
\end{thm}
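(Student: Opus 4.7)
The plan is to prove the theorem by identifying $\huaB_{A}$ with the character group and transporting structure through this bijection. That $(\huaB_{A},\cdot)$ is an abelian group is immediate from the pointwise-on-trees definition, so the real work is to verify the two post-group axioms for $\rhd$ and to exhibit the sub-adjacent group concretely.

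I would first set up the map $\Phi\colon \huaB_{A}\to \Hom_{\alg}(\huaH_{\BCK},A)$ sending $a$ to the unique unital algebra homomorphism that agrees with $a$ on $\huaT^{+}$; this is a bijection because $\huaH_{\BCK}$ is the free commutative unital algebra on $\huaT$. Next, I expand the convolution product $*$ on $\Hom_{\alg}(\huaH_{\BCK},A)$ using the explicit coproduct \meqref{co-product}, exploiting that $\Phi(a)$ is multiplicative, so $\Phi(a)(P^{c}(\omega))=\prod_{\tau_{i}\in P^{c}(\omega)}a(\tau_{i})=a(P^{c}(\omega))$. The outcome on every tree $\omega\in\huaT$ is
\[ (\Phi(a)*\Phi(b))(\omega)=a(\omega)+\sum_{c\in AC(\omega)}a(P^{c}(\omega))b(R^{c}(\omega))=a(\omega)+(a\rhd b)(\omega), \]
which is exactly $(a\cdot(a\rhd b))(\omega)$ in view of \meqref{butcher-pre-2} and the definition of $\cdot$. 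Since characters are determined by their values on $\huaT$, this shows that the candidate operation $a\circ b:=a\cdot(a\rhd b)$ corresponds under $\Phi$ to $*$. Because $*$ makes $\Hom_{\alg}(\huaH_{\BCK},A)$ into a group (unit $\varepsilon$ corresponding to $e$, inverses supplied by the antipode in \meqref{antipode}, associativity from coassociativity of $\Delta_{\BCK}$), I conclude that $(\huaB_{A},\circ)$ is a group isomorphic via $\Phi$ to $\Hom_{\alg}(\huaH_{\BCK},A)$, settling the final assertion of the theorem.

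Finally I verify the post-group axioms. The Leibniz rule $a\rhd(b\cdot c)=(a\rhd b)\cdot(a\rhd c)$ is immediate from the visible linearity in $b$ of \meqref{butcher-pre-2}, since the factor $a(P^{c}(\omega))$ does not depend on $b$. Bijectivity of $L_{a}^{\rhd}$ follows from the identity $L_{a}^{\rhd}b=(-a)+(a\circ b)$ in the abelian group $(\huaB_{A},\cdot)$ (with $-a$ denoting the $\cdot$-inverse of $a$), which is a composition of two bijections: left $\circ$-translation by $a$ and $\cdot$-translation by $-a$. The weighted associativity \meqref{Post-4} then drops out formally: using Leibniz and the definition of $\circ$,
\begin{align*}
a\circ(b\circ c)&=a\cdot(a\rhd b)\cdot(a\rhd(b\rhd c))=(a\circ b)\cdot(a\rhd(b\rhd c)),\\
(a\circ b)\circ c&=(a\circ b)\cdot\bigl((a\circ b)\rhd c\bigr),
\end{align*}
so the already established associativity of $\circ$, combined with cancellation in the abelian group $(\huaB_{A},\cdot)$, forces $(a\circ b)\rhd c=a\rhd(b\rhd c)$, which is \meqref{Post-4}.

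The main obstacle is the matching computation in the second paragraph: one must be careful about the convention for $AC(\omega)$ (in particular whether the empty cut, contributing the $1\otimes\omega$ summand, is included in the sum) so that the boundary term from $\omega\otimes 1$ in $\Delta_{\BCK}$ combines correctly with the remaining admissible cuts to reproduce $(a\cdot(a\rhd b))(\omega)$. Once this Hopf-algebraic identification is in hand, every other step — the group structure on $(\huaB_{A},\circ)$, the bijectivity of $L_{a}^{\rhd}$, and the weighted associativity — follows as a short formal consequence.
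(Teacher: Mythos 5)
Your proof is correct, but it reverses the logical flow of the paper's argument in an interesting way. The paper verifies the Leibniz rule \meqref{Post-2} by the same direct computation you sketch, but it establishes the weighted associativity \meqref{Post-4} head-on: it introduces $\delta(X)=\Delta_{\BCK}(X)-X\otimes 1$, uses coassociativity to get $(\Id\otimes\delta)\circ\delta=(\Delta_{\BCK}\otimes\Id)\circ\delta$, and then matches the resulting double sums over admissible cuts against the expansions of $a\rhd(b\rhd c)$ and $\big(a\cdot(a\rhd b)\big)\rhd c$; only afterwards does it observe that $\circ$ is the convolution product on characters. You instead identify $a\circ b=a\cdot(a\rhd b)$ with the convolution product \emph{first}, import associativity, units and antipode-inverses wholesale from the Hopf-algebra side, and then recover \meqref{Post-4} formally from the chain $a\circ(b\circ c)=(a\circ b)\cdot\big(a\rhd(b\rhd c)\big)$ versus $(a\circ b)\circ c=(a\circ b)\cdot\big((a\circ b)\rhd c\big)$ together with cancellation in the abelian group $(\huaB_A,\cdot)$ --- essentially running the proof of Theorem~\mref{pro:subad} backwards, which is legitimate because you have independently established that $(\huaB_A,\circ)$ is a group. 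Both arguments rest on the same coassociativity of $\Delta_{\BCK}$, but yours replaces the iterated-cut bookkeeping by a one-line formal deduction, and it has the additional merit of explicitly proving that $L^\rhd_a$ is bijective (as the composite of a $\circ$-translation and a $\cdot$-translation), a point the paper's proof of this particular theorem passes over in silence even though it is part of the definition of a post-group. Your cautionary remark about whether the empty cut is counted in $AC(\omega)$ is exactly the right convention to pin down for the term $1\otimes\omega$ to appear and for $e\rhd b=b$ to hold.
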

\begin{proof}
For all $a,b,c\in \huaB_{A}$, we have
\begin{eqnarray*}
(a\rhd(b\cdot c))(\emptyset)&=&1=((a\rhd b)\cdot (a\rhd c))(\emptyset),\\
(a\rhd(b\cdot c))(\omega)&=&\sum_{c\in AC(\omega)}a(P^c(\omega))(b\cdot c)(R^c(\omega))\\
                         &=&\sum_{c\in AC(\omega)}a(P^c(\omega))\big(b(R^c(\omega))+c(R^c(\omega))\big)\\
                         &=&\sum_{c\in AC(\omega)}\big(a(P^c(\omega))b(R^c(\omega))+a(P^c(\omega))c(R^c(\omega))\big)\\
                         &=&((a\rhd b)\cdot (a\rhd c))(\omega).
\end{eqnarray*}
Thus, we obtain $a\rhd(b\cdot c)=(a\rhd b)\cdot (a\rhd c)$.

For all $a,b,c\in \huaB_{A}$, we have
\vspace{-.1cm}
\begin{eqnarray*}
\big(a\rhd(b\rhd c)\big)(\emptyset)=1=\big((a\cdot (a\rhd b))\rhd c\big)(\emptyset).
\vspace{-.1cm}
\end{eqnarray*}
For $X\in\huaH_{\BCK}$, define
$\delta(X)=\Delta_{\BCK}(X)-X\otimes 1$. Since $\Delta_{\BCK}$ is
coassociative, we deduce
\begin{eqnarray}
(\Id\otimes \delta)\circ \delta=(\Delta_{\BCK}\otimes \Id)\circ \delta.
\end{eqnarray}
We also have
\vspace{-.1cm}
\begin{eqnarray*}
((\Id\otimes \delta)\circ \delta)(\omega)&=&\sum_{c\in AC(\omega)}\sum_{\tau\in AC(R^c(\omega))}P^c(\omega)\otimes P^{\tau}(R^c(\omega))\otimes R^{\tau}(R^c(\omega)),\\
((\Delta_{\BCK}\otimes \Id)\circ \delta)(\omega)&=&\sum_{c\in AC(\omega)}\Delta_{\BCK}(P^c(\omega))\otimes R^c(\omega)=\sum_{\omega}\omega'\otimes \omega''\otimes \omega'''.
\vspace{-.1cm}
\end{eqnarray*}
Moreover, for $\omega\in \huaT$, we have
\vspace{-.1cm}
\begin{eqnarray*}
\big(a\rhd(b\rhd c)\big)(\omega)&=&\sum_{c\in AC(\omega)}\sum_{\tau\in AC(R^c(\omega))}a(P^c(\omega))b(P^{\tau}(R^c(\omega))) c(R^{\tau}(R^c(\omega))),\\
\big((a\cdot (a\rhd b))\rhd c\big)(\omega)&=&\sum_{\omega}a(\omega') b(\omega'') c(\omega''').
\vspace{-.1cm}
\end{eqnarray*}
Thus, we obtain $a\rhd(b\rhd c)=(a\cdot (a\rhd b))\rhd c$. Therefore, $(\huaB_{A},\cdot,\rhd)$  is a  pre-group.

The sub-adjacent group is the character group on   $\Hom_{\alg}(\huaH_{\BCK},A)$ since
\vspace{-.1cm}
\begin{eqnarray*}
(a\circ b) (1) &\coloneqq& (a\cdot (a\rhd b))(\emptyset)=1,\\
(a\circ b)(\omega)&\coloneqq& (a\cdot (a\rhd b))(\omega)=a(\omega)+\sum_{c\in AC(\omega)}a(P^c(\omega))b(R^c(\omega)),
 \tforall \omega\in \huaT. \hspace{1.5cm} \qedhere
\end{eqnarray*}
\end{proof}
\vspace{-.1cm}

Define the $A$-{\bf Butcher group} (see \mcite{HW} for the cases of $A=\mathbb{R}$ and $\mathbb{C}$) to be the set $$\huaB_{A}\coloneqq \{a:\huaT^+\lon A\,|\,a(\emptyset)=1\}$$
together with the group multiplication $\circ:\huaB_{A}\times \huaB_{A}\lon \huaB_{A}$ given by
\vspace{-.1cm}
\begin{eqnarray*}
\mlabel{butcher-1}(a\circ b)(\emptyset)&=&1,\\
\mlabel{butcher-2}(a\circ b)(\omega)&=&a(\omega)+\sum_{c\in AC(\omega)}a(P^c(\omega))b(R^c(\omega)), \text{with}\,\, a(P^c(\omega))=\prod_{\tau_i\in P^c(\omega)}a(\tau_i)
 \tforall \omega\in \huaT.
\end{eqnarray*}
Thus we immediately have the following results.
\vspace{-.1cm}
\begin{cor}\mlabel{Butcher-G}
\begin{enumerate}
    \item
The {\bf complex Butcher group} $\huaB_{\mathbb C}$ $($resp. {\bf real Butcher group} $\huaB_{\mathbb R}$$)$ is the sub-adjacent group of the pre-group $(\huaB_{\mathbb C},\cdot,\rhd)$ $($resp. $(\huaB_{\mathbb R},\cdot,\rhd)$$)$.

\item
Taking $A$ to be the Laurent series $\mathbb C[[Z,Z^{-1}]$, the character group $\Hom_{\alg}(\huaH_{\BCK},\mathbb C[[Z,Z^{-1}])$ is the sub-adjacent group of the pre-group $(\huaB_{\mathbb C[[Z,Z^{-1}]},\cdot,\rhd)$.
\item
Taking $A$ to be the finite field $\mathbb F_p$, here $p$ is an odd prime number, the character group $\Hom_{\alg}(\huaH_{\BCK},\mathbb F_p)$ is the sub-adjacent group of the pre-group $(\huaB_{\mathbb F_p},\cdot,\rhd)$.
\end{enumerate}
\end{cor}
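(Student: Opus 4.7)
The plan is to deduce all three items as immediate specializations of Theorem~\mref{thm:butchergp}, which already establishes that for any unital commutative algebra $A$, the triple $(\huaB_A,\cdot,\rhd)$ is a pre-group whose sub-adjacent group coincides with the character group $\Hom_{\alg}(\huaH_{\BCK},A)$. The key observation is that the very last display in the proof of that theorem,
\[
(a\circ b)(\omega)=a(\omega)+\sum_{c\in AC(\omega)}a(P^c(\omega))b(R^c(\omega)) \tforall \omega\in\huaT,
\]
matches verbatim the defining multiplication of the $A$-Butcher group introduced just before the corollary. Hence the two group structures on $\huaB_A$, namely the sub-adjacent operation $\circ$ built from Theorem~\mref{thm:butchergp} and the operation in the definition of the $A$-Butcher group, are literally the same. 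This identification is the entire content of the corollary.

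For item (i), I would simply specialize $A=\mathbb C$ and $A=\mathbb R$ and invoke the fact, recorded in \mcite{HW}, that the complex (resp.\ real) Butcher group is by definition $\huaB_{\mathbb C}$ (resp.\ $\huaB_{\mathbb R}$) equipped with the operation $\circ$ above. Thus it is precisely the sub-adjacent group of the pre-group $(\huaB_{\mathbb C},\cdot,\rhd)$ (resp.\ $(\huaB_{\mathbb R},\cdot,\rhd)$), and there is nothing further to check.

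For items (ii) and (iii), I would verify that $\mathbb C[[Z,Z^{-1}]]$ and $\mathbb F_p$ are unital commutative rings, so that the construction of $\huaB_A$ and the entire pre-group verification in Theorem~\mref{thm:butchergp} apply unchanged: only the commutativity and unit of $A$, together with the purely combinatorial coassociativity of $\Delta_{\BCK}$, are used in the argument. In case (iii), the structure constants of $\huaH_{\BCK}$ appearing in \meqref{co-product} and \meqref{antipode} are integers, so one may base-change $\huaH_{\BCK}$ to an $\mathbb F_p$-Hopf algebra without loss and apply the theorem in that setting; the hypothesis $p$ odd is harmless and plays no role here. I do not foresee any genuine obstacle, since the whole content has been absorbed in Theorem~\mref{thm:butchergp}; the corollary amounts to spelling out the three specializations of $A$ and naming the resulting sub-adjacent groups.
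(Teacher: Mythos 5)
Your proposal is correct and follows exactly the route the paper intends: the paper offers no separate proof of Corollary~\mref{Butcher-G}, stating only that the results follow immediately from Theorem~\mref{thm:butchergp} once the $A$-Butcher group multiplication is recognized as the sub-adjacent operation $\circ$. Your additional remark on base-changing $\huaH_{\BCK}$ to $\mathbb F_p$ in item (iii) is a sensible clarification (since the paper's standing convention takes $\bk$ of characteristic zero), but it does not change the argument.
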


\begin{ex}\label{truncation-finite-pre-groups}
Let $\huaT^n$ be the set of isomorphism classes of rooted trees with at most $n$ vertices. We denote  the set of all
maps $\{a:\huaT^n\cup \{\emptyset\}\lon \mathbb F_p\,|\,a(\emptyset)=1\}$ by $\huaB_{\mathbb F_p}^n$, which is closed under  the pre-group structure on $\huaB_{\mathbb F_p}$. Thus, $(\huaB_{\mathbb F_p}^n,\cdot,\rhd)$ is a sub pre-group of $(\huaB_{\mathbb F_p},\cdot,\rhd)$. So for each $n$, $(\huaB_{\mathbb F_p}^n,\cdot,\rhd)$ is a finite pre-group.
\end{ex}

\subsection{Pre-groups and  operads}
We now show that the $\huaP$-group associated to an operad $\huaP$  admits a pre-group. We recall the notions and an example for later use.

\begin{defi}\mlabel{operad}
An operad is a collection $\huaP=\{\huaP(n)\}_{n=1}^{+\infty}$ of right $\bk[\mathbb S_{n}]$-modules together with a family of linear  morphisms
\begin{eqnarray}
\gamma:\huaP(n)\otimes\huaP(m_1)\otimes \cdots\otimes \huaP(m_n)\lon \huaP(m_1+\cdots+m_n) \tforall n,m_1,\ldots,m_n\ge 1,
\end{eqnarray}
satisfying the following three axioms.
\begin{enumerate}\item
({\em Associativity})
For all $f\in\huaP(n),g_1\in\huaP(m_1),\ldots,g_n\in\huaP(m_n),h_{11}\in\huaP(k_1),\ldots,h_{nm_n}\in\huaP(k_{m_1+\cdots+m_n})$,
\begin{equation}\mlabel{operad-1}
\begin{split}
&\gamma(\gamma(f;g_1,\ldots,g_n);h_{11},\ldots,h_{1m_1},h_{21},\ldots,h_{2m_2},\ldots,h_{n1},\ldots,h_{nm_n})\\
=&\gamma(f;\gamma(g_1;h_{11},\ldots,h_{1m_1}),\gamma(g_2;h_{21},\ldots,h_{2m_2}),\ldots,\gamma(g_n;h_{n1},\ldots,h_{nm_n})).
\end{split}
\end{equation}
\item ({\em Unitality})
There is a unity element $\Id\in\huaP(1)$ such that for all $f\in\huaP(n),~n\ge 1,$
\begin{eqnarray}
\mlabel{operad-2}\gamma(\Id;f)=\gamma(f;\underbrace{\Id,\ldots,\Id}_n)=f.
\end{eqnarray}

\item ({\em Equivariance})
For all $f\in\huaP(n),g_1\in\huaP(m_1),\ldots,g_n\in\huaP(m_n)$ and $\sigma\in \mathbb S_{n},~\sigma_1\in \mathbb S_{m_1},\ldots,\sigma_n\in \mathbb S_{m_n}$,
\begin{eqnarray}
\mlabel{operad-3}\gamma(f\sigma;g_1,\ldots,g_n)&=&\gamma(f;g_{\sigma^{-1}(1)},\ldots,g_{\sigma^{-1}(n)})\sigma_{m_{1},\ldots,m_{n}},\\
\mlabel{operad-4}\gamma(f;g_1\sigma_1,\ldots,g_n\sigma_n)&=&\gamma(f;g_{1},\ldots,g_{n})(\sigma_1\times\ldots\times \sigma_n),
\end{eqnarray}
here $\sigma_{m_{1},\ldots,m_{n}}\in\mathbb S_{m_1+\cdots+m_n}$ is the permutation which permutes the  blocks $\pi_i=\{k_1+\cdots+k_{i-1}+1,\ldots,k_1+\cdots+k_{i-1}+k_i\},~i=1,\ldots,n$ according to $\sigma$ and $\sigma_1\times\cdots\times \sigma_n\in \mathbb S_{m_1+\cdots+m_n}$ is  the permutation which acts like $\sigma_i$ on the block $\pi_i,~i=1,\ldots,n$.
\end{enumerate}
\end{defi}

\begin{ex}
The operad of commutative algebras is given by $Com=\{Com(n)\}_{n=1}^{+\infty}$ with
$
Com(n)=\bk,
$
which is equipped with the trivial $\bk[\mathbb S_{n}]$-module structure. The linear map
$$\gamma:Com(n)\otimes Com(m_1)\otimes \cdots\otimes Com(m_n)\lon Com(m_1+\cdots+m_n) \tforall n,m_1,\ldots,m_n\ge 1,$$
is defined by
$\gamma(k;k_1,\ldots,k_n)=kk_1\cdots k_n.$
Moreover, $1_\bk\in \bk$ is the unity element.
\end{ex}

\begin{defi}
 Let  $\huaP$ and $\huaQ$ be operads. A homomorphism from $\huaP$ to $\huaQ$ is a sequence $\{\phi_n:\huaP(n)\lon\huaQ(n)\}_{n=1}^{+\infty}$ of $\bk[\mathbb S_{n}]$-module homomorphisms satisfying the following relations:
 \begin{eqnarray}
\mlabel{homo-1}\phi_1(\Id_\huaP)&=&\Id_\huaQ,\\
\mlabel{homo-2}\phi_{m_1+\cdots+m_n}\big(\gamma(f;g_1,\ldots,g_n)\big)&=&\gamma\big(\phi_n(f);\phi_{m_1}(g_1),\ldots,\phi_{m_n}(g_n)\big),
\end{eqnarray}
for all
$f\in\huaP(n),g_1\in\huaP(m_1),\ldots,g_n\in\huaP(m_n)$.
\end{defi}

Let $\huaP$ be an operad. We denote by
\vspace{-.2cm}
\begin{eqnarray}
G(\huaP)\coloneqq \{\Id_\huaP\}\times\prod_{n=2}^{+\infty}\huaP(n)_{\mathbb S_{n}},
\vspace{-.1cm}
\end{eqnarray}
here $\huaP(n)_{\mathbb S_{n}}$ is the space of coinvariants which is given by $\huaP(n)_{\mathbb S_{n}}\coloneqq \huaP(n)/\{v\sigma-v| \sigma\in \mathbb S_{n},v\in\huaP(n)\}$.
Denote an element of $G(\huaP)$ by $\bar{a}=(\Id_\huaP,\overline{a_2},\ldots,\overline{a_n},\ldots)$.
For all $\bar{a},\bar{b}\in G(\huaP)$,  define a multiplication $\circ :G(\huaP)\times G(\huaP)\lon G(\huaP)$ by
\vspace{-.2cm}
\begin{eqnarray}
(\bar{a}\circ \bar{b})_n=\sum_{k=1}^{n}\sum_{t_1+\cdots+t_k=n}\overline{\gamma(b_k;a_{t_1},\ldots,a_{t_k})}.
\vspace{-.1cm}
\end{eqnarray}
\vspace{-.3cm}

\begin{thm} \mcite{CL2} \mlabel{symmetric-operad-group}
Let $\huaP$ be an operad. Then $(G(\huaP),\circ)$ is a group, called the {\bf $\huaP$-group} of $\huaP$.
\end{thm}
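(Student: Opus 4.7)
The plan is to verify in order: (i) well-definedness of $\circ$ on the coinvariant quotients, (ii) existence of a two-sided identity, (iii) associativity, and (iv) existence of two-sided inverses. The entire argument is a translation of the three operad axioms (unitality, associativity, equivariance) into group-theoretic statements about $G(\huaP)$; throughout, the key role of the coinvariants $\huaP(n)_{\mathbb S_n}$ is to absorb the symmetric group actions that inevitably appear under operad composition.

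For (i), one must check the defining sum $\sum_{k}\sum_{t_1+\cdots+t_k=n}\overline{\gamma(b_k;a_{t_1},\ldots,a_{t_k})}$ is insensitive to replacing $b_k$ by $b_k\sigma$ or $a_{t_i}$ by $a_{t_i}\sigma_i$. The first follows from equivariance \meqref{operad-3}: the right action permutes input slots, but the outer sum runs over all ordered tuples $(t_1,\ldots,t_k)$ of a fixed sum, so the slot permutation is absorbed by re-indexing, modulo a right $\mathbb S_n$-action that is killed in $\huaP(n)_{\mathbb S_n}$. The second follows directly from \meqref{operad-4}, which pushes the $\sigma_i$-actions outside, where they disappear under coinvariants. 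For (ii), take $\bar{e}=(\Id_\huaP,0,0,\ldots)$. In $\bar{a}\circ\bar{e}$ only the $k=1$ term survives because $e_k=0$ for $k\geq 2$, and in $\bar{e}\circ\bar{a}$ only the term $k=n$, $t_1=\cdots=t_n=1$ survives because $e_{t_i}=0$ for $t_i\geq 2$; both reduce to $\overline{a_n}$ by unitality \meqref{operad-2}.

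The bulk of the work is (iii). Expanding $((\bar{a}\circ\bar{b})\circ\bar{c})_n$ yields a triple sum whose generic summand has the form $\overline{\gamma(c_\ell;\gamma(b_{k_1};a_{\ast},\ldots),\ldots,\gamma(b_{k_\ell};a_{\ast},\ldots))}$, while $(\bar{a}\circ(\bar{b}\circ\bar{c}))_n$ yields a sum whose generic summand is $\overline{\gamma(\gamma(c_\ell;b_{k_1},\ldots,b_{k_\ell});a_{\ast},\ldots,a_{\ast})}$. These two expressions match term by term by the operad associativity \meqref{operad-1}; the only real bookkeeping is to identify the index sets under the natural bijection between two-step compositions of $n$ (first into $\ell$ blocks of sizes $k_1,\ldots,k_\ell$, then each block subdivided) and the corresponding single compositions of total size $n$. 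I expect this bookkeeping to be the main obstacle, but the verification is mechanical once one settles on a consistent notation for the nested compositions.

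Finally, for (iv), construct $\bar{a}^{-1}$ recursively by setting $(a^{-1})_1=\Id_\huaP$ and, for $n\geq 2$, isolating the term $k=n$, $t_1=\cdots=t_n=1$ in the equation $(\bar{a}\circ\bar{a}^{-1})_n=0$. By unitality that isolated term equals $\overline{(a^{-1})_n}$, so the equation expresses $\overline{(a^{-1})_n}$ as the negative of a finite sum involving only $a_j$ and $(a^{-1})_m$ with $m<n$; this proves existence and uniqueness of a right inverse. A symmetric argument (starting from $(\bar{a}^{-1}\circ\bar{a})_n=0$) produces a left inverse, and in a monoid with two-sided unit and associative multiplication the two must coincide, completing the proof that $(G(\huaP),\circ)$ is a group.
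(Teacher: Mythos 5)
Your proof is correct. Note first that the paper itself supplies no proof of this theorem: it is quoted from Chapoton--Livernet \mcite{CL2}, so there is no in-text argument to compare against line by line. That said, every ingredient you use appears in the paper's proof of the adjacent Theorem \mref{O-to-pre-group}: your step (i) is verbatim the well-definedness check the authors perform for $\rhd$ using \meqref{operad-3} and \meqref{operad-4} (reindexing the sum over compositions to absorb the slot permutation, with the residual right $\mathbb S_n$-action killed in coinvariants), and your step (iii) is the same nested-sum reindexing via \meqref{operad-1} that the authors use to verify the weighted associativity of $\rhd$. Your step (ii) is the correct reading of the convention that the right factor supplies the outer operation and the left factor the inputs, so that $k=1$ survives on one side and $k=n$ with all $t_i=1$ on the other, both collapsing by \meqref{operad-2}. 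The one piece genuinely absent from the paper is your step (iv): the authors never construct inverses in $(G(\huaP),\circ)$ but instead cite the group structure to deduce that $L^\rhd_{\bar a}$ is bijective. Your recursive construction is sound --- for $n\geq 2$ the composition $t_1+\cdots+t_k=n$ with $k=n$ forces all $t_i=1$, so that term isolates $\overline{(a^{-1})_n}$ by unitality while every other term involves only $(a^{-1})_k$ with $k<n$, giving a well-founded recursion for a right inverse, and the standard monoid argument upgrades it to a two-sided inverse. In short: correct, self-contained, and slightly more complete than what the paper records.
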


We define an abelian group structure on $G(\huaP)$ as follows:
\begin{itemize}\item[\rm(i)]
{\em (group multiplication)} for all $\bar{a},\bar{b}\in G(\huaP)$,
\begin{eqnarray*}
(\bar{a}\cdot \bar{b})_1=\Id_\huaP,~(\bar{a}\cdot \bar{b})_n=\overline{a_n+b_n}\tforall n=2,3,\ldots,
\end{eqnarray*}
\item[\rm(ii)] {\em (identity element)}
$
e=(\Id_\huaP,\bar{0},\ldots,\bar{0},\ldots),
$
\item[\rm(iii)] {\em (inverse element)} for all  $a\in G(\huaP)$,
$
(\bar{a}^{-1})_1=\Id_\huaP,~(\bar{a}^{-1})_n=-\overline{a_n}\tforall n=2,3,\ldots.
$
\end{itemize}
Moreover, for all $\bar{a},\bar{b}\in G(\huaP)$, we define the binary product $\rhd:G(\huaP)\times G(\huaP)\lon G(\huaP)$ by
\vspace{-.1cm}
$$(\bar{a}\rhd \bar{b})_1=\Id_\huaP, \ \ \ (\bar{a}\rhd \bar{b})_n=\sum_{k=2}^{n}\sum_{t_1+\cdots+t_k=n}\overline{\gamma(b_k;a_{t_1},\ldots,a_{t_k})}.
\vspace{-.2cm}
$$
For all $b_k\in \huaP(k), \sigma\in \mathbb S_{k},~k\ge 2$, we have
\begin{eqnarray*}
&&\sum_{t_1+\cdots+t_k=n}\gamma(b_k-b_k\sigma;a_{t_1},\ldots,a_{t_k})\\
&=&\sum_{t_1+\cdots+t_k=n}\gamma(b_k;a_{t_1},\ldots,a_{t_k})-\sum_{t_1+\cdots+t_k=n}\gamma(b_k\sigma;a_{t_1},\ldots,a_{t_k})\\
&\stackrel{\meqref{operad-3}}{=}&\sum_{t_1+\cdots+t_k=n}\gamma(b_k;a_{t_{\sigma^{-1}(1)}},\ldots,a_{t_{\sigma^{-1}(k)}})-\sum_{t_1+\cdots+t_k=n}\gamma(b_k;a_{t_{\sigma^{-1}(1)}},\ldots,a_{t_{\sigma^{-1}(k)}})\sigma_{t_1,\ldots,t_k}\\
&=&\sum_{t_1+\cdots+t_k=n}\Big(\gamma(b_k;a_{t_{\sigma^{-1}(1)}},\ldots,a_{t_{\sigma^{-1}(k)}})-\gamma(b_k;a_{t_{\sigma^{-1}(1)}},\ldots,a_{t_{\sigma^{-1}(k)}})\sigma_{t_1,\ldots,t_k}\Big).
\end{eqnarray*}
Also for $b_k\in
\huaP(k),~a_{t_i}\in\huaP(t_i),~i=1,2,\ldots,k,~\sigma_i\in
\mathbb S_{t_i}$, we have
\begin{eqnarray*}
&&\gamma(b_k;a_{t_1},\ldots,a_{t_k})-\gamma(b_k;a_{t_1}\sigma_1,\ldots,a_{t_k}\sigma_k)\\
&\stackrel{\meqref{operad-4}}{=}&\gamma(b_k;a_{t_1},\ldots,a_{t_k})-\gamma(b_k;a_{t_1},\ldots,a_{t_k})(\sigma_1\times\cdots\times \sigma_k).
\end{eqnarray*}
Thus the multiplication $\rhd$ on $G(\huaP)$ is well-defined.

Moreover, we note that
\vspace{-.1cm}
\begin{eqnarray*}
(\bar{a}\cdot (\bar{a}\rhd \bar{b}))_1&=&\Id_\huaP=(\bar{a}\circ \bar{b})_1,\\
(\bar{a}\cdot (\bar{a}\rhd \bar{b}))_n&=&\bar{a}_n+(\bar{a}\rhd \bar{b})_n=\sum_{k=1}^{n}\sum_{t_1+\cdots+t_k=n}\overline{\gamma(b_k;a_{t_1},\ldots,a_{t_k})}=(\bar{a}\circ \bar{b})_n \tforall n\geq 2.
\vspace{-.1cm}
\end{eqnarray*}

Finally, we can realize a $\huaP$-group as the sub-adjacent group of a pre-group.
\vspace{-.1cm}

\begin{thm}\mlabel{O-to-pre-group}
  With the above notations, $(G(\huaP),\cdot,\rhd)$  is a  pre-group, whose sub-adjacent group is exactly the  $\huaP$-group $(G(\huaP),\circ)$.
\end{thm}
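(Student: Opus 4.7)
The plan is to verify the two post-group axioms for $(G(\huaP),\cdot,\rhd)$, note that $(G(\huaP),\cdot)$ is abelian by construction (componentwise addition on coinvariants), and then identify the sub-adjacent group with the $\huaP$-group. The last identification is already recorded in the display immediately preceding the theorem, where it is shown that $(\bar a\cdot(\bar a\rhd \bar b))_n=(\bar a\circ \bar b)_n$ for all $n\geq 1$. So the substantive work is checking the two axioms.

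For the distributivity axiom \meqref{Post-2}, I would simply expand
\[
(\bar a\rhd(\bar b\cdot \bar c))_n=\sum_{k=2}^{n}\sum_{t_1+\cdots+t_k=n}\overline{\gamma((b\cdot c)_k;a_{t_1},\ldots,a_{t_k})},
\]
and use $(\bar b\cdot \bar c)_k=\bar b_k+\bar c_k$ for $k\geq 2$ together with the linearity of $\gamma$ in its first slot to split the sum into $(\bar a\rhd \bar b)_n+(\bar a\rhd \bar c)_n$, which is $((\bar a\rhd \bar b)\cdot(\bar a\rhd \bar c))_n$. Bijectivity of $L^\rhd_{\bar a}$ follows from the observation that the top-arity term $k=n$ contributes $\gamma(b_n;\Id_\huaP,\ldots,\Id_\huaP)=b_n$ by unitality \meqref{operad-2}, so $\bar b\mapsto \bar a\rhd \bar b$ is upper-triangular with identity diagonal in the arity filtration and hence invertible by induction on $n$.

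The principal content is the weighted associativity \meqref{Post-4}, which—using $\bar a\cdot(\bar a\rhd \bar b)=\bar a\circ \bar b$—reduces to $((\bar a\circ \bar b)\rhd \bar c)_n=(\bar a\rhd(\bar b\rhd \bar c))_n$. Expanding the left-hand side, one substitutes the formula for each $(\bar a\circ \bar b)_{t_i}=\sum_{l_i=1}^{t_i}\sum_{s_{i,1}+\cdots+s_{i,l_i}=t_i}\overline{\gamma(b_{l_i};a_{s_{i,1}},\ldots,a_{s_{i,l_i}})}$ into the $k\geq 2$ slots of $\gamma(c_k;\cdots)$; expanding the right-hand side, one substitutes $(\bar b\rhd \bar c)_k=\sum_{l=2}^k\sum_{u_1+\cdots+u_l=k}\overline{\gamma(c_l;b_{u_1},\ldots,b_{u_l})}$ into $\gamma(-;a_{t_1},\ldots,a_{t_k})$. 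The operadic associativity \meqref{operad-1} converts each nested $\gamma$-expression on one side into the corresponding nested $\gamma$-expression on the other. After reindexing so that LHS $k$ corresponds to RHS $l$ (the arity of $c$), LHS $l_i$ to RHS $u_i$, and LHS $s_{i,j}$ to the appropriate RHS $t_{u_1+\cdots+u_{i-1}+j}$, both triple sums range over the same set of compositions-of-compositions of $n$, yielding termwise equality.

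The main obstacle is keeping track of the passage to coinvariants: operadic associativity \meqref{operad-1} is stated for representatives in $\huaP(n)$, whereas our identities live in $\huaP(n)_{\mathbb S_n}$. This is handled by the same well-definedness argument already deployed in the text just before the theorem to show $\rhd$ is well-defined, which rests on equivariance \meqref{operad-3}–\meqref{operad-4}; it applies termwise to each summand of the double sums. With both axioms verified and $(G(\huaP),\cdot)$ abelian, $(G(\huaP),\cdot,\rhd)$ is a pre-group, and the sub-adjacent group identification with $(G(\huaP),\circ)$ is immediate from the computation noted at the outset.
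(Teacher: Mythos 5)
Your proposal is correct, and for the two substantive axioms it follows the same route as the paper: distributivity \meqref{Post-2} via linearity of $\gamma$ in its first argument, and weighted associativity \meqref{Post-4} via the operadic associativity \meqref{operad-1} together with a reindexing of compositions-of-compositions, with the passage to coinvariants handled exactly as in the well-definedness argument preceding the theorem, and the sub-adjacent identification read off from the display showing $(\bar a\cdot(\bar a\rhd\bar b))_n=(\bar a\circ\bar b)_n$. The one place you genuinely diverge is the bijectivity of $L^\rhd_{\bar a}$: the paper simply invokes Theorem~\mref{symmetric-operad-group} (the Chapoton--Livernet result that $(G(\huaP),\circ)$ is a group), so that $\bar a\circ\bar b=\bar a\cdot(\bar a\rhd\bar b)$ exhibits $L^\rhd_{\bar a}$ as a composite of two bijections, whereas you give a self-contained filtration argument: the $k=n$ term of $(\bar a\rhd\bar b)_n$ forces all $t_i=1$ and hence contributes $\overline{\gamma(b_n;\Id_\huaP,\ldots,\Id_\huaP)}=\overline{b_n}$ by unitality \meqref{operad-2}, while the remaining terms involve only $b_2,\ldots,b_{n-1}$, so the map is triangular with identity diagonal in the arity filtration and invertible by induction on $n$. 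Your version is marginally longer but buys independence from the external result: combined with Theorem~\mref{pro:subad} it would re-derive, rather than presuppose, that $(G(\huaP),\circ)$ is a group. Both arguments are sound.
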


\begin{proof}
For all $\bar{a},\bar{b},\bar{c}\in G(\huaP)$, we have
\vspace{-.1cm}
\begin{eqnarray*}
(\bar{a}\rhd(\bar{b}\cdot \bar{c}))_1&=&\Id_\huaP=((\bar{a}\rhd \bar{b})\cdot (\bar{a}\rhd \bar{c}))_1,\\
(\bar{a}\rhd(\bar{b}\cdot \bar{c}))_n
                   &=&\sum_{k=2}^{n}\sum_{t_1+\cdots+t_k=n}\overline{\gamma\big(b_k+ c_k;a_{t_1},\ldots,a_{t_k}\big)}\\
                   &=&\sum_{k=2}^{n}\sum_{t_1+\cdots+t_k=n}\overline{\gamma\big(b_k;a_{t_1},\ldots,a_{t_k}\big)}+\sum_{k=2}^{n}\sum_{t_1+\cdots+t_k=n}\overline{\gamma\big(c_k;a_{t_1},\ldots,a_{t_k}\big)}\\
                   &=&((\bar{a}\rhd \bar{b})\cdot (\bar{a}\rhd \bar{c}))_n.
\vspace{-.1cm}
\end{eqnarray*}
Thus, we obtain $\bar{a}\rhd(\bar{b}\cdot \bar{c})=(\bar{a}\rhd \bar{b})\cdot (\bar{a}\rhd \bar{c})$. For $\bar{a},\bar{b},\bar{c}\in G(\huaP)$, we have
\vspace{-.2cm}
\begin{eqnarray*}
\big(\bar{a}\rhd(\bar{b}\rhd \bar{c})\big)_1=\Id_\huaP=\big((\bar{a}\cdot (\bar{a}\rhd \bar{b}))\rhd \bar{c}\big)_1.
\vspace{-1cm}
\end{eqnarray*}
For $n\geq 2,$ we have
\vspace{-.2cm}
{\small \begin{eqnarray*}
\big(\bar{a}\rhd(\bar{b}\rhd \bar{c})\big)_n&=&\sum_{k=2}^{n}\sum_{t_1+\cdots+t_k=n}\gamma\big((\bar{b}\rhd \bar{c})_k;\bar{a}_{t_1},\ldots,\bar{a}_{t_k}\big)\\
&=&\sum_{k=2}^{n}\sum_{t_1+\cdots+t_k=n}\sum_{s=2}^{k}\sum_{l_1+\cdots+l_s=k}\overline{\gamma\big(\gamma(c_s;b_{l_1},\ldots,b_{l_s});a_{t_1},\ldots,a_{t_k}\big)}\\
&\stackrel{\meqref{operad-1}}{=}&\sum_{k=2}^{n}\sum_{t_1+\cdots+t_k=n}\sum_{s=2}^{k}\sum_{l_1+\cdots+l_s=k}\overline{\gamma\big(c_s;\gamma(b_{l_1};a_{t_1},\ldots,a_{t_{l_1}}),\ldots,\gamma(b_{l_s};a_{t_{l_1+\cdots+l_{s-1}+1}},\ldots,a_{t_k})\big)}\\
&{=}&\sum_{k=2}^{n}\sum_{s=2}^{k}\sum_{l_1+\cdots+l_s=k}\sum_{t_1+\cdots+t_k=n}\overline{\gamma\big(c_s;\gamma(b_{l_1};a_{t_1},\ldots,a_{t_{l_1}}),\ldots,\gamma(b_{l_s};a_{t_{l_1+\cdots+l_{s-1}+1}},\ldots,a_{t_k})\big)}\\
&=&\sum_{s=2}^{n}\sum_{m_1+\cdots+m_s=n}\gamma\big(\bar{c}_s;(\bar{a}\circ \bar{b})_{m_1},\ldots,(\bar{a}\circ \bar{b})_{m_s}\big)\\
&=&\big((\bar{a}\cdot (\bar{a}\rhd \bar{b}))\rhd \bar{c}\big)_n.
\end{eqnarray*}
}
Thus, we obtain $\bar{a}\rhd(\bar{b}\rhd \bar{c})=(\bar{a}\cdot (\bar{a}\rhd \bar{b}))\rhd \bar{c}$. By Theorem \mref{symmetric-operad-group}, we deduce that the left multiplication $L^\rhd_{\bar{a}}$ is a bijection for all $\bar{a}\in G(\huaP)$. Thus,   $(G(\huaP),\cdot,\rhd)$  is a  pre-group.
\end{proof}

Let $\{\phi_n:\huaP(n)\lon\huaQ(n)\}_{n=1}^{+\infty}$ be a  homomorphism from the operad $\huaP$ to $\huaQ$. Define $\Phi:G(\huaP)\lon G(\huaQ)$ by
\vspace{-.2cm}
$$
\Phi(\Id_\huaP,\bar{a}_2,\ldots,\bar{a}_n,\ldots)=(\phi_1(\Id_\huaP),\overline{\phi_2(a_2)},\ldots,\overline{\phi_n(a_n)},\ldots) \tforall \bar{a}=(\Id_\huaP,\bar{a}_2,\ldots,\bar{a}_n,\ldots)\in G(\huaP).$$
\begin{pro}\mlabel{O-to-pre-group-homo}
 With the above notations,   $\Phi$ is a homomorphism of pre-groups from  $(G(\huaP),\cdot,\rhd)$ to $(G(\huaQ),\cdot,\rhd)$.
\end{pro}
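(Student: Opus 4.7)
The plan is to verify the two defining conditions for a pre-group homomorphism from Definition preceding Theorem~\mref{pro:subad}, namely
$$\Phi(\bar{a}\cdot\bar{b})=\Phi(\bar{a})\cdot\Phi(\bar{b})\quad\text{and}\quad\Phi(\bar{a}\rhd\bar{b})=\Phi(\bar{a})\rhd\Phi(\bar{b}),$$
using the two axioms \meqref{homo-1} and \meqref{homo-2} of an operad homomorphism. Before doing so, I will quickly check that $\Phi$ is well-defined as a map into $G(\huaQ)$: since each $\phi_n$ is a $\bk[\mathbb S_n]$-module homomorphism, it carries the subspace $\{v\sigma-v\mid \sigma\in\mathbb S_n,\,v\in\huaP(n)\}$ into its $\huaQ$-counterpart, hence descends to a linear map $\huaP(n)_{\mathbb S_n}\to\huaQ(n)_{\mathbb S_n}$, and the first-component requirement $\phi_1(\Id_{\huaP})=\Id_{\huaQ}$ is exactly \meqref{homo-1}.

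The first identity is essentially immediate. For $n=1$ both sides equal $\Id_{\huaQ}$, and for $n\ge 2$ the multiplication $\cdot$ is componentwise addition, so the linearity of $\phi_n$ gives $\overline{\phi_n(a_n+b_n)}=\overline{\phi_n(a_n)}+\overline{\phi_n(b_n)}$, which is precisely $(\Phi(\bar{a})\cdot\Phi(\bar{b}))_n$.

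The second identity is the main content. Both sides agree in degree $1$ trivially. For $n\ge 2$, I would expand
$$\Phi(\bar{a}\rhd\bar{b})_n=\overline{\phi_n\Bigl(\sum_{k=2}^{n}\sum_{t_1+\cdots+t_k=n}\gamma(b_k;a_{t_1},\ldots,a_{t_k})\Bigr)},$$
use the linearity of $\phi_n$ to push it inside the sum, and then apply \meqref{homo-2} termwise (noting $t_1+\cdots+t_k=n$) to rewrite each summand as
$$\gamma\bigl(\phi_k(b_k);\phi_{t_1}(a_{t_1}),\ldots,\phi_{t_k}(a_{t_k})\bigr).$$
Passing to coinvariants and reassembling the sum yields exactly the expansion of $(\Phi(\bar{a})\rhd\Phi(\bar{b}))_n$ from the definition of $\rhd$ on $G(\huaQ)$.

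I do not anticipate any genuine obstacle; the entire proposition reduces to two short bookkeeping computations. The only subtlety is the well-definedness on coinvariants, which is guaranteed by the equivariance built into $\phi_n$ being a $\bk[\mathbb S_n]$-module map, and once that is noted, \meqref{homo-1} handles the unit component and \meqref{homo-2} handles the operadic composition that defines~$\rhd$.
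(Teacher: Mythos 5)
Your proposal is correct and follows essentially the same route as the paper's proof: componentwise verification, with linearity of $\phi_n$ handling the additive operation $\cdot$ and \meqref{homo-1}, \meqref{homo-2} handling the unit component and the operadic composition defining $\rhd$. Your additional remark that $\phi_n$ descends to coinvariants (hence $\Phi$ is well defined) is a point the paper leaves implicit, but it is not a different argument.
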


\begin{proof}
For all $\bar{a},\bar{b}\in G(\huaP)$, we have
\begin{eqnarray*}
\phi_1\big((\bar{a}\cdot \bar{b})_1\big)&=&\phi_1\big(\Id_\huaP\big)\stackrel{\meqref{homo-1}}{=}\Id_{\huaQ}=\big(\Phi(\bar{a})\cdot\Phi(\bar{b})\big)_1,\\
\overline{\phi_n\big((\bar{a}\cdot \bar{b})_n\big)}&=&\overline{\phi_n(a_n + b_n)}=\overline{\phi_n(a_n)} + \overline{\phi_n(b_n)}=\big(\Phi(\bar{a})\cdot\Phi(\bar{b})\big)_n.
\end{eqnarray*}
Thus, we deduce $\Phi(\bar{a}\cdot \bar{b})=\Phi(\bar{a})\cdot \Phi(\bar{b})$. Moreover, we have
\begin{eqnarray*}
\phi_1\big((\bar{a}\rhd \bar{b})_1\big)&=&\phi_1\big(\Id_\huaP\big)\stackrel{\meqref{homo-1}}{=}\Id_{\huaQ}=\big(\Phi(\bar{a})\rhd\Phi(\bar{b})\big)_1,\\
\overline{\phi_n\big((\bar{a}\rhd \bar{b})_n\big)}&=&\sum_{k=2}^{n}\sum_{t_1+\cdots+t_k=n}\overline{\phi_n\big(\gamma(b_k;a_{t_1},\ldots,a_{t_k})\big)}\\
                           &\stackrel{\meqref{homo-2}}{=}&\sum_{k=2}^{n}\sum_{t_1+\cdots+t_k=n}\overline{\gamma\big(\phi_k(b_k);\phi_{t_1}(a_{t_1}),\ldots,\phi_{t_k}(a_{t_k})\big)}\\
                           &=&\big(\Phi(\bar{a})\rhd\Phi(\bar{b})\big)_n.
\end{eqnarray*}
Thus, we deduce that $\Phi(\bar{a}\rhd \bar{b})=\Phi(\bar{a})\rhd \Phi(\bar{b})$. Therefore,   $\Phi$ is a homomorphism of pre-groups from $(G(\huaP),\cdot,\rhd)$ to $(G(\huaQ),\cdot,\rhd)$.
\end{proof}

By Theorem \mref{O-to-pre-group} and Proposition \mref{O-to-pre-group-homo}, we have the following result.
\begin{thm}\mlabel{NO-to-group-cate}
The construction of pre-groups from operads is a functor from the category of
operads to the category of pre-groups.
\end{thm}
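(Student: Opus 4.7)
The plan is to assemble the functoriality from the two substantive results already established, Theorem~\mref{O-to-pre-group} and Proposition~\mref{O-to-pre-group-homo}, and then verify the two remaining routine axioms for a functor: preservation of identities and preservation of composition. I would organize the proof as follows.

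First I would declare the object-level and morphism-level assignments. On objects, send an operad $\huaP$ to the pre-group $(G(\huaP),\cdot,\rhd)$ constructed in Theorem~\mref{O-to-pre-group}, which already guarantees that the image lies in the category $\PG$ of pre-groups. On morphisms, send an operad homomorphism $\{\phi_n:\huaP(n)\to\huaQ(n)\}_{n\ge 1}$ to the map
\[
\Phi:(\Id_\huaP,\overline{a_2},\ldots,\overline{a_n},\ldots)\longmapsto (\phi_1(\Id_\huaP),\overline{\phi_2(a_2)},\ldots,\overline{\phi_n(a_n)},\ldots),
\]
which, by Proposition~\mref{O-to-pre-group-homo}, is a homomorphism of pre-groups from $(G(\huaP),\cdot,\rhd)$ to $(G(\huaQ),\cdot,\rhd)$. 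In particular it also respects the sub-adjacent group structure by Theorem~\mref{pro:subad}\meqref{it:subad3}.

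Next I would verify preservation of identities. If $\{\phi_n=\Id_{\huaP(n)}\}_{n\ge 1}$ is the identity homomorphism of $\huaP$, then the induced map $\Phi$ sends each $\bar a\in G(\huaP)$ to itself componentwise, so $\Phi=\Id_{G(\huaP)}$. Then I would verify preservation of composition. Given a second operad homomorphism $\{\psi_n:\huaQ(n)\to\huaR(n)\}_{n\ge 1}$, the composition $\{\psi_n\circ\phi_n\}_{n\ge 1}$ is again an operad homomorphism, since conditions~\meqref{homo-1} and \meqref{homo-2} are stable under composition. Writing $\Phi_\phi$, $\Phi_\psi$, and $\Phi_{\psi\circ\phi}$ for the induced maps on $G(-)$, a direct componentwise comparison
\[
\Phi_{\psi\circ\phi}(\bar a)_n=\overline{(\psi_n\circ\phi_n)(a_n)}=\overline{\psi_n(\phi_n(a_n))}=\Phi_\psi\bigl(\Phi_\phi(\bar a)\bigr)_n
\]
(and the analogous identity in degree $1$) shows $\Phi_{\psi\circ\phi}=\Phi_\psi\circ\Phi_\phi$.

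The only real issue to watch is well-definedness on coinvariants: one has to check that the formula defining $\Phi$ descends to $\huaP(n)_{\mathbb S_n}$, which follows because each $\phi_n$ is $\bk[\mathbb S_n]$-linear and therefore carries the relations $v\sigma-v$ to corresponding relations in $\huaQ(n)$. Since the substantive pre-group identities and the homomorphism identities have already been handled in Theorem~\mref{O-to-pre-group} and Proposition~\mref{O-to-pre-group-homo}, no genuine obstacle remains; the functoriality reduces to this bookkeeping.
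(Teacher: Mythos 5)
Your proposal is correct and follows essentially the same route as the paper, which simply derives the theorem from Theorem~\mref{O-to-pre-group} (objects) and Proposition~\mref{O-to-pre-group-homo} (morphisms) without writing out the remaining checks. The identity/composition verifications and the well-definedness on coinvariants that you spell out are exactly the routine bookkeeping the paper leaves implicit.
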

\vspace{-.2cm}

\begin{ex}
Let $\huaP$ be the operad $Com$. Then the $Com$-group $G(Com)$ is given by
\begin{eqnarray*}
(k_0=1_\bk,k_1,\ldots,k_n,\ldots)\circ (l_0=1_\bk,l_1,\ldots,l_n,\ldots)=(1_\bk,k_1+l_1,\ldots,\sum_{s=1}^{n+1}\sum_{t_1+\cdots+t_s=n+1-s\atop t_1,t_2,\cdots,t_{s}\ge 0}l_{s-1}k_{t_1}\cdots k_{t_s},\ldots,).
\end{eqnarray*}
Moreover, the  pre-group structure on the $Com$-group $G(Com)$ is given by
\vspace{-.1cm}
\begin{eqnarray*}
(1_\bk,k_1,\ldots,k_n,\ldots)\cdot (1_\bk,l_1,\ldots,l_n,\ldots)&=&(1_\bk,k_1+l_1,k_2+l_2,\ldots,k_n+l_n,\ldots),\\
(1_\bk,k_1,\ldots,k_n,\ldots)\rhd (1_\bk,l_1,\ldots,l_n,\ldots)&=&(1_\bk,l_1,l_2+2l_1k_1,\ldots,\sum_{s=2}^{n+1}\sum_{t_1+\cdots+t_s=n+1-s\atop t_1,t_2,\cdots,t_{s}\ge 0}l_{s-1}k_{t_1}\cdots k_{t_s},\ldots,).
\end{eqnarray*}
Equivalently, there is a pre-group structure on the set of $1_\bk+x\bk [[x]]$ which is given by
\begin{eqnarray*}
(1_\bk+\sum_{n=1}^{+\infty}k_nx^n)\cdot (1_\bk+\sum_{n=1}^{+\infty}l_nx^n)&=&1_\bk+\sum_{n=1}^{+\infty}(k_n+l_n)x^n,\\
(1_\bk+\sum_{n=1}^{+\infty}k_nx^n)\rhd (1_\bk+\sum_{n=1}^{+\infty}l_nx^n)&=&1_\bk+\sum_{n=1}^{+\infty}\Big(\sum_{s=2}^{n+1}\sum_{t_1+\cdots+t_s=n+1-s\atop t_1,t_2,\cdots,t_{s}\ge 0}l_{s-1}k_{t_1}\cdots k_{t_s}\Big)x^n.
\end{eqnarray*}
\end{ex}

\section{Post-groups, the Yang-Baxter equation, relative Rota-Baxter operators and skew-left braces}
\mlabel{sec:related}

In this section, we show that post-groups play a central role in several diverse notions ranging from relative Rota-Baxter operators and the Yang-Baxter equation to skew-left braces, as showing in the diagram \meqref{eq:bigdiag}.

\subsection{Post-groups and relative Rota-Baxter operators}\mlabel{post-group-rota-baxter}

We show that a relative Rota-Baxter operator naturally induces a post-group, whose sub-adjacent group is exactly the descendant group given in \mcite{GLS}. In addition, for a post-group, the identity map is naturally a relative Rota-Baxter operator on the sub-adjacent group.

In the sequel, $(G, e_G, \cdot_G)$ and $(H, e_H, \cdot_H)$ are groups, and   $\Phi:G \lon \Aut(H)$ is an action of the group $G$ on the group $H$. On $H\times G$, there is the following group multiplication
$$
(h,a)\cdot_\Phi(k,b)=(h\cdot_H \Phi(a)k,a\cdot_G b)\tforall a,b\in G,~h,k\in H.
$$
The group $(H\times G,\cdot_\Phi)$ is called the {\bf semidirect product} of $G$ and $H$, and denoted by
$H\rtimes_{\Phi}G$.
\begin{defi}\mcite{GLS}
Let $(G,\cdot_G)$ and $(H,\cdot_H)$ be groups with a group action $\Phi:G\to \Aut(H)$. A map $\huaB:H\longrightarrow G$ is called a {\bf relative Rota-Baxter operator} on $(G,\cdot_G)$ with respect to the action $\Phi$ if the following equality holds for all
$h,k\in H$,
\begin{equation}\mlabel{RRBO}
 \huaB(h)\cdot_G \huaB(k)=\huaB\big(h\cdot_H\Phi(\huaB(h))(k)\big).
\end{equation}
In particular, a  relative Rota-Baxter operator  $\huaB:G\longrightarrow G$  on a group $(G,\cdot_G)$ with respect to the adjoint action $\Ad$ is called a {\bf Rota-Baxter operator} on $G$. More precisely, $\huaB:G\longrightarrow G$ is a Rota-Baxter operator if
\vspace{-.1cm}
\begin{eqnarray}\mlabel{RBG}
\huaB(a)\huaB(b)=\huaB(a\huaB(a)b\huaB(a)^{-1}) \tforall a,b\in G.
\end{eqnarray}
\end{defi}

Let  $\huaB:H\longrightarrow G$ be a relative Rota-Baxter operator. Then it was proved in \mcite{GLS} that the multiplication $\circ_H:H\times H\lon H$ given by
\begin{eqnarray}\mlabel{eq:descen}
h\circ_H k=h\cdot_H\Phi(\huaB(h))(k) \tforall h,k\in H,
\end{eqnarray}
defines a new group structure on $H$, called the {\bf descendant group} of $\huaB$.

\begin{pro}\mlabel{post-group-to-RB}
Let $(G,\cdot,\rhd)$ be a post-group. Then   the identity map $\Id:G\lon G$ is a relative Rota-Baxter operator on the sub-adjacent group $(G,\circ)$ given in Theorem \mref{pro:subad} with respect to the action $L^\rhd$ on the group $(G,\cdot)$.
\end{pro}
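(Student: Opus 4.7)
The statement to prove unpacks to two things: first, that $L^\rhd$ really is a group action of the sub-adjacent group $(G,\circ)$ on $(G,\cdot)$; and second, that $\Id:G\to G$ satisfies the relative Rota-Baxter identity \meqref{RRBO} with respect to this action. My plan is to dispose of both by direct appeal to what has already been proved.

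First, I would invoke Theorem~\mref{pro:subad}\meqref{it:subad2}, which asserts that $L^\rhd:(G,\circ)\to\Aut(G,\cdot)$ is a group homomorphism; this is precisely the datum needed to speak of a relative Rota-Baxter operator on $(G,\circ)$ with respect to the action $L^\rhd$ on $(G,\cdot)$. So no further verification is needed on the ``action'' side.

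Next, I would write out condition \meqref{RRBO} for $\huaB=\Id$, the groups $(G,\cdot_G)$ and $(H,\cdot_H)$ both replaced by $(G,\circ)$ and $(G,\cdot)$ respectively, and the action $\Phi$ replaced by $L^\rhd$. The left-hand side becomes $\Id(h)\circ\Id(k)=h\circ k$, while the right-hand side becomes $\Id\bigl(h\cdot L^\rhd(\Id(h))(k)\bigr)=h\cdot(h\rhd k)$. But the defining formula \meqref{eq:subad-com} of the sub-adjacent product is exactly $h\circ k=h\cdot(h\rhd k)$, so the two sides coincide tautologically.

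There is no real obstacle here: the entire content of the proposition is the observation that the definition of $\circ$ and the relative Rota-Baxter identity for $\Id$ are the same equation, once one has the action $L^\rhd$ from Theorem~\mref{pro:subad}\meqref{it:subad2}. I would therefore present the proof as a two-line verification, citing \meqref{eq:subad-com} and Theorem~\mref{pro:subad}\meqref{it:subad2}.
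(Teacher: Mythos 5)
Your proof is correct and is essentially identical to the paper's: the paper likewise observes that \meqref{eq:subad-com} rewrites as $\Id(a)\circ\Id(b)=\Id\big(a\cdot(L^\rhd_{\Id(a)}b)\big)$, which is exactly \meqref{RRBO} for $\huaB=\Id$ with the action $L^\rhd$ supplied by Theorem~\mref{pro:subad}~\meqref{it:subad2}. Your version is if anything slightly more explicit about citing the action part, but there is no substantive difference.
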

\begin{proof}
By \meqref{eq:subad-com}, we get
\vspace{-.1cm}
\begin{eqnarray}
\Id(a)\circ \Id(b)=\Id(a\cdot (L^\rhd_{\Id(a)}  b)) \tforall a,b\in G,
\end{eqnarray}
which implies that $\Id:G\lon G$ is a relative Rota-Baxter operator on $(G,\circ)$ with respect to the action $L^\rhd$ on the group $(G,\cdot)$.
\end{proof}

It is well known that relative Rota-Baxter operators of nonzero weights on Lie algebras induce post-Lie algebras.
On the level of groups, we have the following result.
\begin{thm}\mlabel{thm:RBPost}
Let $\huaB:H\longrightarrow G$ be a  relative Rota-Baxter operator on a group $(G,\cdot_G)$ with respect to an action $\Phi$. Define a multiplication $\rhd:H\times H\lon H$ by
\begin{eqnarray}
h\rhd k\coloneqq \Phi(\huaB(h))(k) \tforall h,k\in H.
\end{eqnarray}
Then $(H,\cdot_H,\rhd)$ is a post-group, whose sub-adjacent group is the descendant group of $\huaB$.
\end{thm}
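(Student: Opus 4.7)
The plan is to verify the two post-group axioms from the definition, then observe that the sub-adjacent multiplication \eqref{eq:subad-com} coincides with the descendant multiplication \eqref{eq:descen}.

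First I would check that for each $h \in H$, the left multiplication $L^\rhd_h = \Phi(\huaB(h))$ is an automorphism of $(H,\cdot_H)$. This is immediate from the hypothesis that $\Phi$ takes values in $\Aut(H)$, so $L^\rhd_h$ is a group automorphism and in particular bijective. This gives \eqref{Post-2} and the requirement that $L^\rhd_h$ is invertible.

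Next I would verify the weighted associativity \eqref{Post-4}, namely
\[
\bigl(h \cdot_H (h \rhd k)\bigr) \rhd l = h \rhd (k \rhd l) \tforall h,k,l \in H.
\]
Expanding the left-hand side via the definition of $\rhd$ gives $\Phi\bigl(\huaB(h \cdot_H \Phi(\huaB(h))(k))\bigr)(l)$. The relative Rota-Baxter identity \eqref{RRBO} rewrites the argument of $\Phi$ as $\huaB(h) \cdot_G \huaB(k)$, and then the fact that $\Phi:G \to \Aut(H)$ is a group homomorphism turns this into $\Phi(\huaB(h)) \circ \Phi(\huaB(k))$ applied to $l$. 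Unwinding the definition once more recovers $h \rhd (k \rhd l)$. This is the only step of substance, and the relative Rota-Baxter relation is precisely what is needed to carry it out, so I do not anticipate a genuine obstacle.

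Having established that $(H,\cdot_H,\rhd)$ is a post-group, the sub-adjacent multiplication from Theorem \ref{pro:subad} is
\[
h \circ k = h \cdot_H (h \rhd k) = h \cdot_H \Phi(\huaB(h))(k),
\]
which is exactly the descendant group operation \eqref{eq:descen}. This gives the final claim without any further work. Altogether the proof is essentially a direct translation between the relative Rota-Baxter identity and the post-group axiom \eqref{Post-4}, together with a bookkeeping check that the sub-adjacent and descendant multiplications agree.
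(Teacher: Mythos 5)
Your proposal is correct and follows essentially the same route as the paper: the substantive step in both is the identity $\Phi\bigl(\huaB(h\cdot_H\Phi(\huaB(h))(k))\bigr)=\Phi(\huaB(h))\circ\Phi(\huaB(k))$, obtained from the relative Rota-Baxter relation \eqref{RRBO} together with $\Phi$ being a group homomorphism into $\Aut(H,\cdot_H)$. The only cosmetic difference is that the paper packages this by citing the prior fact that $(H,\circ_H)$ is a group with $\huaB$ a homomorphism and then observing that $\Phi\circ\huaB$ is an action of $(H,\circ_H)$ on $(H,\cdot_H)$, whereas you verify the two post-group axioms directly; both are complete.
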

\begin{proof}
Let  $\huaB:H\longrightarrow G$ be a  relative Rota-Baxter
operator on a group $(G,\cdot_G)$ with respect to an action
$\Phi$. Then $(H,\circ_H)$ is a group and  $\huaB$ is a
homomorphism from the group $(H,\circ_H)$ to $(G,\cdot_G)$, where
$\circ_H$ is given by \meqref{eq:descen}. Since
$\Phi:(G,\cdot_G)\lon \Aut(H,\cdot_H)$ is an action of
$(G,\cdot_G)$ on $(H,\cdot_H)$, we deduce that
$\Phi\circ\huaB:(H,\circ_H)\to \Aut(H,\cdot_H)$ is an action of
$(H,\circ_H)$ on $(H,\cdot_H)$. Moreover, for $h\in H$, we have $L_h^{\rhd}=\Phi(\huaB(h))$. Therefore, we
deduce that $(H,\cdot_H,\rhd)$ is a post-group. The other
conclusion is obvious.
\end{proof}

\begin{rmk}
When a relative Rota-Baxter operator on a group is invertible, its inverse is a crossed homomorphism, that is, a bijective $1$-cocycle~\mcite{GLS}. So invertible
crossed homomorphisms on groups or bijective $1$-cocycles give rise to post-groups by the above theorem.
\end{rmk}

Let $G, H$ be groups and let $\huaB:H\to G$ be a map. The map
\begin{eqnarray}
\xi_\huaB:H\times G\to H\rtimes_{\Phi}G, \quad  \xi_\huaB(h,a)=(h,\huaB(h)\cdot_Ga) \tforall h\in H,~a\in G,
\end{eqnarray}
is invertible. In fact, the inverse map $\xi^{-1}:H\rtimes_{\Phi}G\to H\times G$  is given by
\begin{eqnarray}
\xi_\huaB^{-1}(h,a)=(h,\huaB(h)^{-1}\cdot_Ga) \tforall h\in H,~a\in G.
\end{eqnarray}
Transfer the group structure on $H\rtimes_{\Phi}G$ to $H\times G$, we obtain a group $(H\times G,\ast)$. So the multiplication $\ast$ is  given by
\begin{equation}
\begin{split}
&(h,a)\ast(k,b)\\
=&\xi_\huaB^{-1}(\xi_\huaB(h,a)\cdot \xi_\huaB(k,b))\\ =&\xi_\huaB^{-1}((h,\huaB(h)\cdot_Ga)\cdot (k,\huaB(k)\cdot_Gb))\\
=&\xi_\huaB^{-1}(h\cdot_H \Phi(\huaB(h)\cdot_Ga)(k),\huaB(h)\cdot_Ga\cdot_G\huaB(k)\cdot_Gb)\\
=&\Big(h\cdot_H \Phi(\huaB(h)\cdot_Ga)(k),\huaB(h\cdot_H \Phi(\huaB(h)\cdot_Ga)(k))^{-1}\cdot_G\huaB(h)\cdot_Ga\cdot_G\huaB(k)\cdot_Gb\Big).
\end{split}
\mlabel{mathched-pair}
\end{equation}
Moreover, the unit is  $e=\xi_\huaB^{-1}(e_H,e_G)=(e_H,\huaB(e_H)^{-1})$ and the inverse of $(h,a)$ is
\begin{eqnarray}\mlabel{inverse-big}
(h,a)^{\dagger}=\Big(\Phi(\huaB(h)\cdot_G a)^{-1}(h^{-1}),\huaB(\Phi(\huaB(h)\cdot_G a)^{-1}(h^{-1}))^{-1}\cdot_G(\huaB(h)\cdot_G a)^{-1}\Big).
\end{eqnarray}

\begin{pro}\mlabel{pro:factor}
With the  above notations, $(H\times G,\ast)$  has a group factorization into  subgroups $H\times\{e_G\}$ and $\{e_H\}\times G$  if and only if $\huaB:H\to G$ is a relative Rota-Baxter operator on the group $(G,\cdot_G)$ with respect to the action $\Phi$.
\end{pro}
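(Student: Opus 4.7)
The proof plan hinges on translating ``group factorization into the two given subsets'' into three concrete requirements, each of which I will test against the explicit formulas \eqref{mathched-pair} for $\ast$ and \eqref{inverse-big} for the inverse: (a) $H\times\{e_G\}$ and $\{e_H\}\times G$ are subgroups of $(H\times G,\ast)$; (b) $(H\times\{e_G\})\cap(\{e_H\}\times G)=\{(e_H,e_G)\}$; (c) every element of $H\times G$ is expressible as the $\ast$-product of one element from each. Among these, closure of $H\times\{e_G\}$ under $\ast$ is the only nontrivial condition, and it will turn out to be equivalent to \eqref{RRBO}, giving both directions simultaneously.

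First, I will specialize \eqref{mathched-pair} to $(h,e_G)\ast(k,e_G)$. The first coordinate becomes $h\cdot_H\Phi(\huaB(h))(k)$, and the second reduces to $\huaB\bigl(h\cdot_H\Phi(\huaB(h))(k)\bigr)^{-1}\cdot_G\huaB(h)\cdot_G\huaB(k)$. So $H\times\{e_G\}$ is closed under $\ast$ if and only if the second coordinate equals $e_G$ for all $h,k\in H$, which is precisely equation \eqref{RRBO}. This single calculation yields the ``only if'' direction and the hardest half of the ``if'' direction. By contrast, for $(e_H,a)\ast(e_H,b)$ the first coordinate is automatically $e_H$, so $\{e_H\}\times G$ is closed under $\ast$ with no assumption on $\huaB$; this can also be seen conceptually from $\xi_\huaB^{-1}(\{e_H\}\rtimes_\Phi G)=\{e_H\}\times G$.

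For the ``if'' direction, assume \eqref{RRBO}. Setting $h=k=e_H$ in \eqref{RRBO} gives $\huaB(e_H)\cdot_G\huaB(e_H)=\huaB(e_H)$, hence $\huaB(e_H)=e_G$; consequently the unit $(e_H,\huaB(e_H)^{-1})=(e_H,e_G)$ of $(H\times G,\ast)$ lies in both candidate subgroups, and (b) holds automatically. For closure under inversion of $H\times\{e_G\}$, I will substitute $k=\Phi(\huaB(h))^{-1}(h^{-1})$ into \eqref{RRBO} to obtain $\huaB\bigl(\Phi(\huaB(h))^{-1}(h^{-1})\bigr)=\huaB(h)^{-1}$; plugging $(h,e_G)$ into \eqref{inverse-big} together with this identity shows $(h,e_G)^{\dagger}\in H\times\{e_G\}$. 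Closure of $\{e_H\}\times G$ under inversion follows in the same way (or directly from the $\xi_\huaB^{-1}$ description).

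It remains to verify (c). I will compute $(h,e_G)\ast(e_H,a)$ via \eqref{mathched-pair}: the first coordinate collapses to $h\cdot_H\Phi(\huaB(h))(e_H)=h$, and the second to $\huaB(h)^{-1}\cdot_G\huaB(h)\cdot_G\huaB(e_H)\cdot_G a = a$, using $\huaB(e_H)=e_G$. Thus every $(h,a)$ factors as $(h,e_G)\ast(e_H,a)$, and the factorization is unique by (b). The only real technical obstacle is the bookkeeping in the ``subgroup under inversion'' step, where one must recognize that the auxiliary identity $\huaB(\Phi(\huaB(h))^{-1}(h^{-1}))=\huaB(h)^{-1}$ is a consequence of \eqref{RRBO}; once this is in place, the remaining manipulations are purely formal applications of the given formulas.
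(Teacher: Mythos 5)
Your proposal is correct and follows essentially the same route as the paper's proof: the key step in both is specializing \eqref{mathched-pair} to $(h,e_G)\ast(k,e_G)$ and observing that closure of $H\times\{e_G\}$ in the second coordinate is literally \eqref{RRBO}, followed by the same verifications of inversion closure via \eqref{inverse-big}, closure of $\{e_H\}\times G$, and the factorization $(h,a)=(h,e_G)\ast(e_H,a)$. Your explicit derivations of $\huaB(e_H)=e_G$ and of the auxiliary identity $\huaB(\Phi(\huaB(h))^{-1}(h^{-1}))=\huaB(h)^{-1}$ are exactly the computations the paper performs (the latter is its chain of equalities for $(h,e_G)^{\dagger}$), so nothing further is needed.
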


\begin{proof}
 For all $h,k\in H$, we have
\begin{eqnarray}\mlabel{descendant-group}
(h,e_G)\ast(k,e_G)&\stackrel{\meqref{mathched-pair}}{=}&\Big(h\cdot_H \Phi(\huaB(h))(k),\huaB(h\cdot_H \Phi(\huaB(h))(k))^{-1}\cdot_G\huaB(h)\cdot_G\huaB(k)\Big).
\end{eqnarray}
Thus, if $H\times\{e_G\}$ is a subgroup of $(H\times G,\ast)$, we get
$$
\huaB(h\cdot_H \Phi(\huaB(h))(k))^{-1}\cdot_G\huaB(h)\cdot_G\huaB(k)=e_G,
$$
which implies  that $\huaB:H\to G$ is a relative Rota-Baxter operator on the group $(G,\cdot_G)$ with respect to the action $\Phi$.

Conversely, let $\huaB:H\to G$ be a relative Rota-Baxter operator.
We have $\huaB(e_H)=e_G$. By \meqref{descendant-group}, we obtain
$(h,e_G)\ast(k,e_G)\in H\times\{e_G\}$. For
$(h,e_G)\in H\times\{e_G\}$, we have
\begin{eqnarray*}
(h,e_G)^{\dagger}&\stackrel{\meqref{inverse-big}}{=}&(\Phi(\huaB(h))^{-1}(h^{-1}),\huaB(\Phi(\huaB(h))^{-1}(h^{-1}))^{-1}\cdot_G(\huaB(h))^{-1})\\
                 &=&\Big(\Phi(\huaB(h))^{-1}(h^{-1}),\Big(\huaB(h)\cdot_G\huaB\big(\Phi(\huaB(h))^{-1}(h^{-1})\big)\Big)^{-1}\Big)\\
                 &\stackrel{\meqref{RRBO}}{=}&\Big(\Phi(\huaB(h))^{-1}(h^{-1}),\Big(\huaB\big(h\cdot_H\Phi(\huaB(h))(\Phi(\huaB(h))^{-1}(h^{-1}))\big)\Big)^{-1}\Big)\\
                 &=&(\Phi(\huaB(h))^{-1}(h^{-1}),\huaB(e_H)^{-1})\\
                 &=&(\Phi(\huaB(h))^{-1}(h^{-1}),e_G).
\end{eqnarray*}
Thus, we deduce that $(h,e_G)^{\dagger}\in H\times\{e_G\}$, and $H\times\{e_G\}$ is a subgroup of $(H\times G,\ast)$.

Since $\huaB(e_H)=e_G$, for all $(e_H,a),(e_H,b)\in \{e_H\}\times G$, we have
\begin{eqnarray*}
(e_H,a)\ast(e_H,b)&\stackrel{\meqref{mathched-pair}}{=}&(e_H,a\cdot_G\huaB(e_H)\cdot_Gb)=(e_H,a\cdot_Gb).
\end{eqnarray*}
For all $(e_H,a)\in \{e_H\}\times G$, we have
\begin{eqnarray*}
(e_H,a)^{\dagger}&\stackrel{\meqref{inverse-big}}{=}&(e_H,\huaB(e_H)^{-1}\cdot_G(\huaB(e_H)\cdot_G a)^{-1})=(e_H, a^{-1}).
\end{eqnarray*}
Thus, we deduce that $\{e_H\}\times G$ is a subgroup of $(H\times G,\ast)$. For all $h\in H$ and $a\in G$, we have
\begin{eqnarray*}
(h,e_G)\ast(e_H,a)&\stackrel{\meqref{mathched-pair}}{=}&(h,\huaB(e_H)\cdot_Ga)=(h,a).
\end{eqnarray*}
It is obvious that the unit of the group $(H\times G,\ast)$ is given by $e=(e_H,e_G)$, and $H\times\{e_G\}\cap \{e_H\}\times G=\{(e_H,e_G)\}$. Therefore,  $(H\times G,\ast)$  has a group factorization into  subgroups $H\times\{e_G\}$ and $\{e_H\}\times G$.
\end{proof}

\subsection{Post-groups and the Yang-Baxter equation}
\mlabel{sec:cohomologyRB}

We now show that a post-group gives rise to a braided group, and thus leads to a solution of the Yang-Baxter equation. Thanks to the relation between relative Rota-Baxter operators and post-groups, relative Rota-Baxter operators also give rise to solutions of the Yang-Baxter equation. We further show that the construction from post-groups to braided  sets is functorial, which admits a left adjoint functor from the category of braided  sets to the category of post-groups.

\begin{defi}
Let $X$ be a set. A set-theoretical solution of the {\bf Yang-Baxter equation} on
$X$ is a bijective map $R:X\times X\lon X\times X$ satisfying:
\begin{eqnarray}
R_{12}R_{23}R_{12}=R_{23}R_{12}R_{23},\,\,\,\,\text{where}\,\,\,\,R_{12}=R\times\Id_X,~R_{23}=\Id_X\times R.
\end{eqnarray}
A set $X$ with a set-theoretical solution of
the Yang-Baxter equation on $X$ is called a {\bf braided set} and
is denoted   by $(X,R)$. Moreover, we denote by
$R(x,y)=(\varphi_{x}(y),\psi_{y}(x))$
 for all $x,y\in X$.   $R$  is called {\bf non-degenerate} if for all $x,y\in X$, the maps $\varphi_{x}$ and $\psi_{y}$ are bijective.
\end{defi}

\begin{defi}
  Let  $(X,R)$ and $(X',R')$ be braided sets. A homomorphism from $(X,R)$ to $(X',R')$ is a map $f:X\to X'$ such that
\begin{eqnarray}
(f\times f)\circ R=R'\circ(f\times f).
\end{eqnarray}
\end{defi}

It is obvious that non-degenerate braided sets and homomorphisms between non-degenerate braided sets form a category $\BS$.

\begin{defi} \mcite{Ta}
A {\bf matched pair of groups} is a triple $(G,H,\sigma)$, where $G$ and $H$ are groups and
$$\sigma:G\times H\lon H\times G,\,\,\,\,(a,h)\mapsto(a\rightharpoonup h,a\leftharpoonup h)$$
is a map satisfying the following conditions:
\begin{eqnarray}
\mlabel{MG-1}e_G\rightharpoonup h&=&h,\\
\mlabel{MG-2}a\rightharpoonup(b\rightharpoonup h)&=&(ab)\rightharpoonup h,\\
\mlabel{MG-3}(ab)\leftharpoonup h&=&\big(a\leftharpoonup(b\rightharpoonup h)\big)(b\leftharpoonup h),\\
\mlabel{MG-4}a\leftharpoonup e_H&=& a,\\
\mlabel{MG-5}(a\leftharpoonup h)\leftharpoonup k&=&a\leftharpoonup(hk),\\
\mlabel{MG-6}a\rightharpoonup(hk)&=&(a\rightharpoonup h)\big((a\leftharpoonup h)\rightharpoonup k\big),
\end{eqnarray}
for all   $a,b\in G$,~$h,k\in H.$
\end{defi}

\begin{rmk}
Let $(G,H,\sigma)$ be a matched pair of groups. Then $\sigma$ is bijective. Moreover, the triple $(H,G,\sigma^{-1})$ is also a  matched pair of groups.
\end{rmk}

Relative Rota-Baxter operators on groups naturally give rise to matched pairs of groups.

\begin{pro}\mlabel{rrbmp}
Let $\huaB:H\to G$ be a relative Rota-Baxter operator on a group $(G,\cdot_G)$ with respect to an action $\Phi$ on a group $(H,\cdot_H)$. Define $\rightharpoonup:G\times H\to H$ and $\leftharpoonup:G\times H\to G$ respectively by
\begin{eqnarray*}
a\rightharpoonup h&=&\Phi(a)(h),\\
a\leftharpoonup h &=&\huaB(\Phi(a)(h))^{-1}\cdot_Ga\cdot_G\huaB(h) \tforall h\in H,~a\in G.
\end{eqnarray*}
Then $((G,\cdot_G),(H,\circ_H),\sigma)$ is a matched pair of groups, where $\sigma(a,h)=(a \rightharpoonup h,a\leftharpoonup h)$.
\end{pro}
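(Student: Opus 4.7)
The task reduces to verifying the six matched-pair axioms \meqref{MG-1}--\meqref{MG-6}, and the strategy is to work through them in the order in which each new tool becomes necessary. A preliminary observation is that $\huaB(e_H) = e_G$: substituting $h = k = e_H$ into \meqref{RRBO} gives $\huaB(e_H)\cdot_G \huaB(e_H) = \huaB(e_H)$, so right-cancellation in $G$ yields the claim. In addition, the relative Rota-Baxter identity \meqref{RRBO} says precisely that $\huaB$ is a group homomorphism from the descendant group $(H,\circ_H)$ to $(G,\cdot_G)$, and this is the single fact that will do most of the work.

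Axioms \meqref{MG-1}, \meqref{MG-2} are immediate because $\rightharpoonup$ is just the action $\Phi$, while \meqref{MG-4} follows from $\huaB(e_H) = e_G$ inserted into the defining formula for $\leftharpoonup$. For \meqref{MG-3}, I would expand the right-hand side $\bigl(a\leftharpoonup(b\rightharpoonup h)\bigr)\cdot_G (b\leftharpoonup h)$ using the definitions: the middle factor $\huaB(\Phi(b)(h))$ and its inverse telescope, leaving exactly $\huaB(\Phi(ab)(h))^{-1}\cdot_G a\cdot_G b\cdot_G \huaB(h)$, which is the left-hand side.

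The two nontrivial verifications are \meqref{MG-5} and \meqref{MG-6}. For \meqref{MG-6}, I would expand $\Phi(a)(h\circ_H k) = \Phi(a)(h)\cdot_H \Phi(a\cdot_G\huaB(h))(k)$ using the descendant law and that $\Phi$ is an action; then the right-hand side $(a\rightharpoonup h)\circ_H \bigl((a\leftharpoonup h)\rightharpoonup k\bigr)$, again unfolded via the descendant law, produces $\Phi(a)(h)\cdot_H \Phi\bigl(\huaB(\Phi(a)(h))\cdot_G(a\leftharpoonup h)\bigr)(k)$, whereupon the factor $\huaB(\Phi(a)(h))$ cancels against its inverse appearing in $a\leftharpoonup h$ and leaves $\Phi(a\cdot_G\huaB(h))(k)$, matching the left-hand side. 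For \meqref{MG-5}, the main obstacle is recognizing that after expansion the identity collapses to
\[
\huaB\bigl(\Phi(a)(h\circ_H k)\bigr) \;=\; \huaB(\Phi(a)(h))\cdot_G \huaB\bigl(\Phi(a\leftharpoonup h)(k)\bigr).
\]
This is exactly the relative Rota-Baxter equation \meqref{RRBO} applied to the pair $\bigl(\Phi(a)(h),\Phi(a\leftharpoonup h)(k)\bigr)\in H\times H$, once one checks that $\Phi(\huaB(\Phi(a)(h)))\Phi(a\leftharpoonup h) = \Phi(a\cdot_G \huaB(h))$; this last identity is immediate from the definition of $\leftharpoonup$ combined with $\Phi$ being a group action. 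Combining this with the homomorphism property of $\huaB$ in the form $\huaB(h)\cdot_G\huaB(k) = \huaB(h\circ_H k)$ finishes the argument.

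The anticipated main obstacle is the bookkeeping in \meqref{MG-5}: one has to recognize, after unfolding the nested $\leftharpoonup$, that the combination of the action property of $\Phi$ and \meqref{RRBO} is exactly what is needed, rather than trying to move every term back into $G$ by brute force. Once the correct grouping is spotted the verification is purely mechanical, so I would present it by first isolating the auxiliary identity above and proving it as a short lemma within the proof.
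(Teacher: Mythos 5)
Your proof is correct, but it takes a genuinely different route from the paper's. The paper proves Proposition \mref{rrbmp} in one line by invoking Proposition \mref{pro:factor}: the group $(H\times G,\ast)$ admits an exact factorization into the subgroups $H\times\{e_G\}$ and $\{e_H\}\times G$, and the matched-pair axioms are then read off from the standard correspondence between exact factorizations and matched pairs. You instead verify the six axioms \meqref{MG-1}--\meqref{MG-6} directly, and your computations are sound: the telescoping of $\huaB(\Phi(b)(h))$ in \meqref{MG-3}, the cancellation $\huaB(\Phi(a)(h))\cdot_G(a\leftharpoonup h)=a\cdot_G\huaB(h)$ in \meqref{MG-6}, and the reduction of \meqref{MG-5} to $\huaB\bigl(\Phi(a)(h\circ_H k)\bigr)=\huaB(\Phi(a)(h))\cdot_G\huaB\bigl(\Phi(a\leftharpoonup h)(k)\bigr)$, which is indeed \meqref{RRBO} applied to the pair $\bigl(\Phi(a)(h),\Phi(a\leftharpoonup h)(k)\bigr)$ after the same cancellation, all check out. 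What your approach buys is a self-contained argument that pinpoints exactly where the relative Rota--Baxter identity is needed (only in \meqref{MG-5}, via $\huaB$ being a homomorphism from the descendant group $(H,\circ_H)$ to $(G,\cdot_G)$), whereas the axioms \meqref{MG-1}--\meqref{MG-4} and \meqref{MG-6} hold for any map $\huaB$ with $\huaB(e_H)=e_G$; the paper's approach is shorter given that Proposition \mref{pro:factor} has already done the heavy lifting, but it conceals this distinction inside the factorization argument.
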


\begin{proof}
It follows from the fact that $(H\times G,\ast)$  is  a group factorization into  subgroups $H\times\{e_G\}$ and $\{e_H\}\times G$ by Proposition \mref{pro:factor}.
\end{proof}

\begin{defi} \mcite{Ta} \mlabel{de:bg}
A {\bf braided group} is a pair $(G,\sigma)$, where $G$ is a group and $\sigma:G\times G\lon G\times G$ is a map such that
\begin{enumerate}
\item[\rm(i)] the triple $(G,G,\sigma)$ is a matched pair of groups,
\item[\rm(ii)] $(a\rightharpoonup b)(a \leftharpoonup b)=ab \tforall a,b\in G$.
\end{enumerate}
For braided groups $(G,\sigma)$ and $(G',\sigma')$, a homomorphism of braided groups from $(G,\sigma)$ to $(G',\sigma')$ is a homomorphism of groups $f:G\to G'$ such that
\begin{eqnarray}\mlabel{BG-homo}
(f\times f)\circ \sigma=\sigma'\circ(f\times f).
\end{eqnarray}
\end{defi}

Braided groups and homomorphisms between braided groups form a category $\BG$.

\begin{thm}{\rm (\mcite{LYZ})}\mlabel{bgybe}
Let $(G,\sigma)$ be a braided group. Then $\sigma$ is a non-degenerate solution of the Yang-Baxter equation on the set $G$.
\end{thm}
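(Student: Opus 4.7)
The plan is to decompose the theorem into two assertions: (a) $\sigma$ is non-degenerate, and (b) $\sigma$ satisfies the braid relation $\sigma_{12}\sigma_{23}\sigma_{12}=\sigma_{23}\sigma_{12}\sigma_{23}$ on $G\times G\times G$. For (a), note that matched pair axioms \meqref{MG-1} and \meqref{MG-2} say precisely that $(a,b)\mapsto a\rightharpoonup b$ is a left group action of $(G,\cdot)$ on the underlying set $G$; hence each $\varphi_a=a\rightharpoonup(-)$ is a bijection with inverse $\varphi_{a^{-1}}$. Dually, \meqref{MG-4} and \meqref{MG-5} read $\psi_k\circ\psi_h=\psi_{hk}$ and $\psi_{e_G}=\Id$, so each $\psi_b=(-)\leftharpoonup b$ is a bijection with inverse $\psi_{b^{-1}}$. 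This part does not even use the braided group identity.

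For (b) I would apply both triples to a general $(a,b,c)\in G^3$ and compare coordinates. A direct unfolding yields
\begin{align*}
\sigma_{12}\sigma_{23}\sigma_{12}(a,b,c)&=\bigl((a\rightharpoonup b)\rightharpoonup((a\leftharpoonup b)\rightharpoonup c),\ (a\rightharpoonup b)\leftharpoonup((a\leftharpoonup b)\rightharpoonup c),\ (a\leftharpoonup b)\leftharpoonup c\bigr), \\
\sigma_{23}\sigma_{12}\sigma_{23}(a,b,c)&=\bigl(a\rightharpoonup(b\rightharpoonup c),\ (a\leftharpoonup(b\rightharpoonup c))\rightharpoonup(b\leftharpoonup c),\ (a\leftharpoonup(b\rightharpoonup c))\leftharpoonup(b\leftharpoonup c)\bigr).
\end{align*}
The agreement of first coordinates follows from \meqref{MG-2} combined with the braided group identity $(a\rightharpoonup b)(a\leftharpoonup b)=ab$, since then $(a\rightharpoonup b)\rightharpoonup((a\leftharpoonup b)\rightharpoonup c)=((a\rightharpoonup b)(a\leftharpoonup b))\rightharpoonup c=(ab)\rightharpoonup c=a\rightharpoonup(b\rightharpoonup c)$. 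Symmetrically, the third coordinates both collapse to $a\leftharpoonup(bc)$ by \meqref{MG-5} and the identity $(b\rightharpoonup c)(b\leftharpoonup c)=bc$.

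The main obstacle is the middle coordinate, whose equality is not implied by any single axiom. My plan is to compute $a\rightharpoonup(bc)$ two different ways. First, \meqref{MG-6} gives $a\rightharpoonup(bc)=(a\rightharpoonup b)((a\leftharpoonup b)\rightharpoonup c)$; writing $x\defbe a\rightharpoonup b$ and $y\defbe(a\leftharpoonup b)\rightharpoonup c$ and applying the braided identity to the pair $(x,y)$ further splits this as $(x\rightharpoonup y)(x\leftharpoonup y)$. Second, substituting the braided identity $bc=(b\rightharpoonup c)(b\leftharpoonup c)$ inside and then applying \meqref{MG-6} gives $a\rightharpoonup(bc)=(a\rightharpoonup(b\rightharpoonup c))((a\leftharpoonup(b\rightharpoonup c))\rightharpoonup(b\leftharpoonup c))$. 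The leading factors coincide by the first-coordinate analysis above, so cancellation in $G$ forces
\[
(a\rightharpoonup b)\leftharpoonup((a\leftharpoonup b)\rightharpoonup c)=(a\leftharpoonup(b\rightharpoonup c))\rightharpoonup(b\leftharpoonup c),
\]
which is exactly the missing middle-coordinate identity. Together with (a), this completes the verification that $\sigma$ is a non-degenerate set-theoretical solution of the Yang-Baxter equation. I expect the entire calculation to be routine once this cancellation trick (using \meqref{MG-6} twice against the braided identity) is in hand; a dual argument using \meqref{MG-3} and cancelling on the right would also work.
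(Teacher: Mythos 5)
Your proof is correct. Note first that the paper does not prove this statement at all: it is quoted from Lu--Yan--Zhu \mcite{LYZ} with only a citation, so your argument is a self-contained reconstruction of the standard proof rather than a variant of anything in the text. All the steps check out: non-degeneracy is exactly the observation that \meqref{MG-1}--\meqref{MG-2} make $\varphi_a$ part of a left action and \meqref{MG-4}--\meqref{MG-5} make $\psi_b$ part of a right action, with no use of the compatibility condition; the first and third coordinates of the two triple compositions agree by \meqref{MG-2}, \meqref{MG-5} and the identity $(a\rightharpoonup b)(a\leftharpoonup b)=ab$; and the middle coordinate, which is the only nontrivial point, follows from your double expansion of $a\rightharpoonup(bc)$ via \meqref{MG-6} together with left cancellation in $G$, since the leading factors were already shown equal. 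This is precisely the mechanism used in \mcite{LYZ} and in Takeuchi's survey \mcite{Ta}, so your proposal matches the cited source's approach; the only thing the paper "buys" by citing instead of proving is brevity.
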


Now we are ready to give the main result in this subsection, namely a post-group naturally gives rise to a
set-theoretical solution of the Yang-Baxter equation. Let
$(G,\cdot_G,\rhd_G)$ be a post-group. Define
\begin{eqnarray}
R_G:G\times G\lon G\times G, \quad R_G(a,b)=(a\rhd_G b,(a\rhd_G b)^\dagger\circ_G a\circ_G b) \tforall a,b\in G,
\end{eqnarray}
where $\circ_G$ is the sub-adjacent group structure given in Theorem \mref{pro:subad}.

\begin{pro}\mlabel{pgybe}
Let $(G,\cdot_G,\rhd_G)$ be a post-group. Then  $\big((G,\circ_G),R_G\big)$ is a braided group, and $R_G$   is a solution of the  Yang-Baxter equation  on the set $G$.
Moreover, let $\Psi$ be a  homomorphism of post-groups from  $(G,\cdot_G,\rhd_G)$ to $(H,\cdot_H,\rhd_H)$. Then $\Psi$ is a homomorphism of braided groups from $\big((G,\circ_G),R_G\big)$ to $\big((H,\circ_H),R_H\big)$.
\end{pro}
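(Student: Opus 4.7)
The plan is to prove the first assertion via the matched-pair machinery already established in Propositions \mref{post-group-to-RB} and \mref{rrbmp}, then invoke Theorem \mref{bgybe} for the Yang-Baxter statement. The homomorphism claim will follow from part (iii) of Theorem \mref{pro:subad} together with a direct unpacking of $R_G$.

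First I would invoke Proposition \mref{post-group-to-RB}: the identity map $\Id:(G,\cdot_G)\to(G,\circ_G)$ is a relative Rota-Baxter operator with respect to the action $L^{\rhd_G}$. A short computation shows that the descendant group coming from this relative Rota-Baxter operator via \meqref{eq:descen} coincides with the sub-adjacent group $(G,\circ_G)$, since
\[
h\cdot_G L^{\rhd_G}_{\Id(h)}(k)=h\cdot_G(h\rhd_G k)=h\circ_G k.
\]
Applying Proposition \mref{rrbmp} then yields a matched pair $((G,\circ_G),(G,\circ_G),\sigma)$ with $a\rightharpoonup h = a\rhd_G h$ and $a\leftharpoonup h = (a\rhd_G h)^{\dagger}\circ_G a\circ_G h$, so that $\sigma(a,h)=R_G(a,h)$. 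To promote this matched pair to a braided group, I then verify condition (ii) of Definition \mref{de:bg}: since $(a\rhd_G b)^{\dagger}$ is the $\circ_G$-inverse of $a\rhd_G b$ by part (i) of Theorem \mref{pro:subad}, associativity of $\circ_G$ immediately gives
\[
(a\rhd_G b)\circ_G\big((a\rhd_G b)^{\dagger}\circ_G a\circ_G b\big)=a\circ_G b.
\]
Hence $((G,\circ_G),R_G)$ is a braided group, and Theorem \mref{bgybe} delivers the (non-degenerate) Yang-Baxter statement.

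For the functoriality part, part (iii) of Theorem \mref{pro:subad} already ensures that $\Psi$ is a group homomorphism $(G,\circ_G)\to(H,\circ_H)$, so in particular $\Psi(x^{\dagger_G})=\Psi(x)^{\dagger_H}$ for all $x\in G$. Combined with $\Psi(a\rhd_G b)=\Psi(a)\rhd_H\Psi(b)$ from the definition of a post-group homomorphism, both components of $(\Psi\times\Psi)(R_G(a,b))$ match the corresponding components of $R_H(\Psi(a),\Psi(b))$, confirming \meqref{BG-homo}.

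The main obstacle is essentially bookkeeping: one has to instantiate Proposition \mref{rrbmp} carefully in the situation where the source and target of the relative Rota-Baxter operator share the same underlying set $G$ but carry the different multiplications $\cdot_G$ and $\circ_G$, taking care that the inverse appearing in $a\leftharpoonup h$ is the $\dagger$-inverse in $(G,\circ_G)$ rather than the $\cdot_G$-inverse. Once these identifications are in place, every step reduces to a direct manipulation and no further post-group axiom is invoked beyond those already packaged in Theorem \mref{pro:subad}.
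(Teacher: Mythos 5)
Your proposal is correct and follows essentially the same route as the paper's own proof: Proposition \mref{post-group-to-RB} to get the relative Rota-Baxter operator $\Id$, Proposition \mref{rrbmp} to obtain the matched pair $\big((G,\circ_G),(G,\circ_G),R_G\big)$, the one-line verification of condition (ii) of Definition \mref{de:bg}, and Theorem \mref{bgybe} for the Yang-Baxter conclusion, with the homomorphism part handled exactly as in the paper via Theorem \mref{pro:subad}~\meqref{it:subad3} and \meqref{Post-homo-1}. The bookkeeping point you flag (that the inverse in $a\leftharpoonup h$ is the $\circ_G$-inverse $\dagger$, since the target group of $\Id$ is $(G,\circ_G)$) is indeed the only subtlety, and you resolve it correctly.
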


\begin{proof}
By Proposition \mref{post-group-to-RB}, the identity map $\Id:G\lon G$ is a relative Rota-Baxter operator on the sub-adjacent group  $(G,\circ_G)$ with respect to the action $L^\rhd$ on the group $(G,\cdot_G)$. Moreover, by Proposition \mref{rrbmp},  $\big((G,\circ_G),(G,\circ_G),R_G\big)$ is a matched pair of groups, where
\begin{eqnarray*}
a\rightharpoonup b&=&a\rhd_G b,\\
a\leftharpoonup b &=&(a\rhd_G b)^{\dagger}\circ_G a\circ_G b \tforall a,b\in G.
\end{eqnarray*}

Furthermore, we have $$(a\rightharpoonup b)\circ_G (a\leftharpoonup b)=(a\rhd_G b)\circ_G(a\rhd_G b)^{\dagger}\circ_G a\circ_G b=   a\circ_G b.$$ Therefore,  $\big((G,\circ_G),R_G\big)$ is a braided group, and  $R_G$ is a solution of the  Yang-Baxter equation  on the set $G$  by Theorem \mref{bgybe}.

Let $\Psi$ be a  homomorphism of post-groups from  $(G,\cdot_G,\rhd_G)$ to $(H,\cdot_H,\rhd_H)$.  By Theorem~\mref{pro:subad} \meqref{it:subad3}, $\Psi$ is a homomorphism of  groups from  $(G,\circ_G)$ to $(H,\circ_H)$.
Moreover, for all $a,b\in G$, we have
\begin{eqnarray*}
(\Psi\times \Psi)(R_G(a,b))&=&\Big(\Psi(a\rhd_G b),\Psi\big((a\rhd_G b)^\dagger\circ_G a\circ_G b\big)\Big)\\
&\stackrel{\meqref{Post-homo-1},\meqref{BG-homo-1}}{=}&\Big(\Psi(a)\rhd_H \Psi(b),\big(\Psi(a)\rhd_H \Psi(b)\big)^\dagger\circ_H \Psi(a)\circ_H \Psi(b)\Big)\\
&=&R_H\big((\Psi\times \Psi)(a,b)\big).
\end{eqnarray*}
Thus,    $\Psi$ is a homomorphism of braided groups from  $\big((G,\circ_G),R_G\big)$ to $\big((H,\circ_H),R_H\big)$.
\end{proof}

Since a relative Rota-Baxter operator naturally induces a post-group, a relative Rota-Baxter operator naturally provides a solution of the Yang-Baxter equation, giving the following conclusion.

\begin{cor}
Let $\huaB:H\longrightarrow G$ be a  relative Rota-Baxter operator on a group $(G,\cdot_G)$ with respect to an action $\Phi$. Then $R_\huaB:H\times H\to H\times H$ defined by
\begin{eqnarray}
R_\huaB(h,k)=\Big(\Phi(\huaB(h))(k),(\Phi(\huaB(h))(k))^\dagger \circ_H h\circ_H k\Big) \tforall h,k\in H,
\end{eqnarray}
is a solution of the  Yang-Baxter equation  on the set $H$, where $\circ_H$ is the descendant group structure given by \meqref{eq:descen}.
\end{cor}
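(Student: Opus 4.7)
The plan is to reduce the statement to the post-group case already treated in Proposition \mref{pgybe} by invoking Theorem \mref{thm:RBPost} as a bridge. There is essentially no new computation to perform; all the work has been done earlier.

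First, I would apply Theorem \mref{thm:RBPost} to the relative Rota-Baxter operator $\huaB:H\lon G$. This endows $H$ with the post-group structure $(H,\cdot_H,\rhd)$, where the new product is $h\rhd k=\Phi(\huaB(h))(k)$, and moreover the theorem identifies the sub-adjacent group of this post-group with the descendant group $(H,\circ_H)$ determined by \meqref{eq:descen}. In particular, the operation $\circ_H$ appearing in the definition of $R_\huaB$ coincides with the sub-adjacent group operation from Theorem \mref{pro:subad} applied to $(H,\cdot_H,\rhd)$.

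Next, I would feed the post-group $(H,\cdot_H,\rhd)$ into Proposition \mref{pgybe}. That proposition produces the map
\[
R_H(h,k)=\bigl(h\rhd k,\,(h\rhd k)^{\dagger}\circ_H h\circ_H k\bigr),
\]
and asserts that $R_H$ is a solution of the Yang-Baxter equation on the set $H$. Substituting $h\rhd k=\Phi(\huaB(h))(k)$ into this formula yields
\[
R_H(h,k)=\Bigl(\Phi(\huaB(h))(k),\,\bigl(\Phi(\huaB(h))(k)\bigr)^{\dagger}\circ_H h\circ_H k\Bigr),
\]
which is exactly the map $R_\huaB$ in the statement. Hence $R_\huaB=R_H$ is a solution of the Yang-Baxter equation, and the corollary follows.

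Since both ingredients (Theorem \mref{thm:RBPost} and Proposition \mref{pgybe}) are already available and align cleanly, the only point to verify carefully is the compatibility of the inverse symbol $\dagger$: the notation $(\Phi(\huaB(h))(k))^{\dagger}$ in $R_\huaB$ must be interpreted as the inverse with respect to $\circ_H$, which matches Theorem \mref{pro:subad}\meqref{it:subad1} since $(H,\circ_H)$ is precisely the sub-adjacent group of $(H,\cdot_H,\rhd)$. This minor bookkeeping is the only obstacle; the rest is a direct citation.
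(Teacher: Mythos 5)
Your proposal is correct and follows exactly the paper's own route: the corollary is stated immediately after Proposition~\mref{pgybe} with the remark that a relative Rota-Baxter operator induces a post-group via Theorem~\mref{thm:RBPost}, whose sub-adjacent group is the descendant group, so Proposition~\mref{pgybe} applies verbatim. Your extra care about interpreting $\dagger$ as the inverse in $(H,\circ_H)$ is the right bookkeeping and matches the paper's intent.
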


\begin{rmk}\mlabel{pre-group-symmetric-group}
Let $(G,\cdot_G,\rhd_G)$ be a pre-group. Then  $\big((G,\circ_G),R_G\big)$ is a braided group with $R_G^2=\Id$.
\end{rmk}

\begin{ex}
Consider the pre-group $(\huaB_{\mathbb R},\cdot,\rhd)$ given in Corollary \mref{Butcher-G}. Then $R:\huaB_{\mathbb R}\times \huaB_{\mathbb R}\to \huaB_{\mathbb R}\times \huaB_{\mathbb R}$ defined by
$$
R(a,b)=\Big(a\rhd b, (a\rhd b)^{^\dagger}\circ a\circ b \Big),\quad a,b\in \huaB_{\mathbb R},
$$
is a solution of the Yang-Baxter equation on the set $\huaB_{\mathbb R}$. More precisely, we have
{\small \begin{eqnarray*}
(a\rhd b)(\omega)&=&\sum_{c\in AC(\omega)}a\big(P^c(\omega)\big)b\big(R^c(\omega)\big),\\
((a\rhd b)^{^\dagger}\circ a\circ b)(\omega)&=&\sum_{\omega}\sum_{c\in C(\omega_{(1)})}\sum_{\tau\in AC(\omega_{(1)}\setminus c)}(-1)^{|c|+t(\omega_{(1)})}a\big(P^\tau(\omega_{(1)}\setminus c)\big)b\big(R^\tau(\omega_{(1)}\setminus c)\big)a(\omega_{(2)})b(\omega_{(3)}),
\end{eqnarray*}
}

\noindent
where ${\Delta_{\BCK}}^{(2)}(\omega)=\sum_{\omega}\omega_{(1)}\otimes\omega_{(2)}\otimes \omega_{(3)}$ and $t(\omega_{(1)})$ is the number of trees in the rooted forest $\omega_{(1)}$.
\end{ex}

Let $\big((G,\circ_G),R_G\big)$ be a braided group. We write
$R_G(a,b)=(a\rightharpoonup b,a\leftharpoonup b)$ for all $a,b\in
G$, and define $\rhd_G:G\times G\lon G$ and $\cdot_G:G\times G\lon
G$ respectively by
\begin{eqnarray*}
a\rhd_G b=a\rightharpoonup b,\quad
a\cdot_G b=a\circ_G (a^\dagger\rightharpoonup b) \tforall a,b\in G.
\end{eqnarray*}
Here $a^\dagger$ is the inverse of $a$ in the group $(G,\circ_G).$
\begin{pro}\mlabel{ybepg}
Let $\big((G,\circ_G),R_G\big)$ be a braided group. Then $(G,\cdot_G,\rhd_G)$ is a post-group.
Moreover, let $f$ be a  homomorphism of
braided groups from $\big((G,\circ_G),R_G\big)$ to
$\big((H,\circ_H),R_H\big)$. Then $f$ is a homomorphism of
post-groups from $(G,\cdot_G,\rhd_G)$ to $(H,\cdot_H,\rhd_H)$.
\end{pro}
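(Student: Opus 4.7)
The plan is to verify in order that $(G,\cdot_G)$ is a group, that $\rhd_G$ satisfies the two post-group axioms \meqref{Post-2} and \meqref{Post-4}, that each $L^{\rhd_G}_a$ is bijective, and finally that homomorphisms of braided groups translate to homomorphisms of post-groups. Throughout I will use the matched pair axioms \meqref{MG-1}--\meqref{MG-6} (with both groups equal to $(G,\circ_G)$) together with the defining braided group identity $(a\rightharpoonup b)\circ_G(a\leftharpoonup b)=a\circ_G b$, which rearranges to $a\leftharpoonup b=(a\rightharpoonup b)^\dagger\circ_G a\circ_G b$.

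The key preliminary observation is that
\[
 a\cdot_G(a\rhd_G b) \;=\; a\circ_G\bigl(a^\dagger\rightharpoonup(a\rightharpoonup b)\bigr)\stackrel{\meqref{MG-2}}{=} a\circ_G\bigl((a^\dagger\circ_G a)\rightharpoonup b\bigr)\stackrel{\meqref{MG-1}}{=} a\circ_G b,
\]
so the $\circ_G$-operation is recovered from $\cdot_G$ and $\rhd_G$ by the sub-adjacent formula \meqref{eq:subad-com}. Using this identity, axiom \meqref{Post-4} reduces directly to $(a\circ_G b)\rightharpoonup c=a\rightharpoonup(b\rightharpoonup c)$, which is exactly \meqref{MG-2}. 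For \meqref{Post-2}, I would expand the right-hand side of $a\rhd_G(b\cdot_G c)$ by \meqref{MG-6}, obtaining $(a\rightharpoonup b)\circ_G\bigl((a\leftharpoonup b)\rightharpoonup(b^\dagger\rightharpoonup c)\bigr)$, and then apply \meqref{MG-2} together with the rearranged braided group identity $(a\leftharpoonup b)\circ_G b^\dagger=(a\rightharpoonup b)^\dagger\circ_G a$ to match it with $(a\rhd_G b)\cdot_G(a\rhd_G c)$.

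To show $(G,\cdot_G)$ is a group, I first note that \meqref{MG-1} and the consequence $x\rightharpoonup e=e$ (read off from \meqref{MG-6} with $h=k=e$) give $e$ as two-sided identity for $\cdot_G$. For the inverse I propose $a^{\star}:=a\rightharpoonup a^\dagger$, verified by the same cancellation used above. Associativity is the one nontrivial check: writing $X=a^\dagger\rightharpoonup b$ so that $a\cdot_G b=a\circ_G X$, a direct expansion gives
\[
 (a\cdot_G b)\cdot_G c=(a\circ_G X)\circ_G\bigl(X^\dagger\rightharpoonup(a^\dagger\rightharpoonup c)\bigr),
\]
while \meqref{MG-6} applied to $a^\dagger\rightharpoonup(b\circ_G(b^\dagger\rightharpoonup c))$, together with the identity $(a^\dagger\leftharpoonup b)\circ_G b^\dagger=X^\dagger\circ_G a^\dagger$, yields the same expression for $a\cdot_G(b\cdot_G c)$. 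This algebraic matching using the braided group relation is the main technical step. Bijectivity of $L^{\rhd_G}_a$ follows because Theorem \mref{bgybe} guarantees non-degeneracy of $R_G$.

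Finally, for the functorial statement, if $f:G\to H$ is a braided group homomorphism then it is in particular a group homomorphism $(G,\circ_G)\to(H,\circ_H)$, and the relation \meqref{BG-homo} forces $f(a\rightharpoonup b)=f(a)\rightharpoonup f(b)$; combined with $f(a^\dagger)=f(a)^\dagger$, this immediately yields $f(a\rhd_G b)=f(a)\rhd_H f(b)$ and $f(a\cdot_G b)=f(a)\cdot_H f(b)$. The main obstacle I anticipate is keeping the computation in the associativity step and in \meqref{Post-2} readable; both reduce, by the same pair of tricks (use \meqref{MG-6} once, then replace $\leftharpoonup$ via the braided identity), to equalities that collapse using \meqref{MG-2}, so no further input beyond the matched pair axioms is needed.
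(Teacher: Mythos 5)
Your plan is correct and follows essentially the same route as the paper's proof: establish $a\cdot_G(a\rhd_G b)=a\circ_G b$, reduce \meqref{Post-4} to \meqref{MG-2}, verify \meqref{Post-2} by expanding with \meqref{MG-6} and the braided identity $(a\rightharpoonup b)\circ_G(a\leftharpoonup b)=a\circ_G b$, check the group axioms for $\cdot_G$ directly, and read off the homomorphism statement from \meqref{BG-homo}. The only cosmetic difference is that you invoke non-degeneracy of $R_G$ for bijectivity of $L^{\rhd_G}_a$, where the paper gets it for free from $\rightharpoonup$ being a group action via \meqref{MG-1}--\meqref{MG-2}; both are valid.
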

\begin{proof}
Since $\big((G,\circ_G),R_G\big)$ is a braided group, by \meqref{MG-1}-\meqref{MG-2} and \meqref{MG-4}-\meqref{MG-5}, we deduce that $\rightharpoonup$ and $\leftharpoonup$ are respectively the left action and right action of the group $(G,\circ_G)$ on the set $G$. Then we have
\begin{eqnarray*}
&&a\cdot_G e_G=a\circ_G (a^\dagger\rightharpoonup e_G)\stackrel{\meqref{MG-4},\meqref{MG-6}}{=}a,\\
&&a\cdot_G (a\rightharpoonup a^\dagger)=a\circ_G \big(a^\dagger\rightharpoonup (a\rightharpoonup a^\dagger)\big)\stackrel{\meqref{MG-1},\meqref{MG-2}}{=}e_G.
\end{eqnarray*}
Thus, $e_G$ is  a right unit of the multiplication $\cdot_G$ and
$a\rightharpoonup a^\dagger$ is  a  right inverse of $a\in G.$ For $a,b,c\in G$, we have
\begin{eqnarray*}
a\rhd_G (b\cdot_G c)&=&a\rightharpoonup \big(b\circ_G (b^\dagger\rightharpoonup c)\big) \stackrel{\meqref{MG-6}}{=}(a\rightharpoonup b)\circ_G \Big((a\leftharpoonup b)\rightharpoonup(b^\dagger\rightharpoonup c)\Big),\\
(a\rhd_G b)\cdot_G (a\rhd_G c)&=&(a\rightharpoonup b)\cdot_G (a\rightharpoonup c)=(a\rightharpoonup b)\circ_G\Big((a\rightharpoonup b)^\dagger\rightharpoonup (a\rightharpoonup c)\Big).
\end{eqnarray*}
Moreover, since $(a\rightharpoonup b)\circ_G(a \leftharpoonup b)=a \circ_G b$, we have
\begin{eqnarray*}
(a\rightharpoonup b)\rightharpoonup\Big((a\leftharpoonup b)\rightharpoonup(b^\dagger\rightharpoonup c)\Big)&\stackrel{\meqref{MG-2}}{=}&(a \circ_G b)\rightharpoonup(b^\dagger\rightharpoonup c)\stackrel{\meqref{MG-2}}{=}a\rightharpoonup c\\
&=&(a\rightharpoonup b)\rightharpoonup\Big((a\rightharpoonup b)^\dagger\rightharpoonup (a\rightharpoonup c)\Big).
\end{eqnarray*}
Thus, we obtain $\Big((a\leftharpoonup b)\rightharpoonup(b^\dagger\rightharpoonup c)\Big)=\Big((a\rightharpoonup b)^\dagger\rightharpoonup (a\rightharpoonup c)\Big)$. Moreover, we have
\begin{eqnarray}\mlabel{rhd-action}
a\rhd_G (b\cdot_G c)=(a\rhd_G b)\cdot_G (a\rhd_G c).
\end{eqnarray}
For all $a,b,c\in G$, we have
\begin{eqnarray*}
a\cdot_G(b\cdot_G c)&=&a\circ_G\Big(a^\dagger\rightharpoonup(b\cdot_G c)\Big)\\
&\stackrel{\meqref{rhd-action}}{=}& a\circ_G\Big((a^\dagger\rightharpoonup b)\cdot_G (a^\dagger\rightharpoonup c)\Big)\\
&=&a\circ_G\Big((a^\dagger\rightharpoonup b)\circ_G \big((a^\dagger\rightharpoonup b)^\dagger\rightharpoonup(a^\dagger\rightharpoonup c)\big)\Big)\\
&=&\big(a\circ_G(a^\dagger\rightharpoonup b)\big) \circ_G\big((a^\dagger\rightharpoonup b)^\dagger\rightharpoonup(a^\dagger\rightharpoonup c)\big)\\
&\stackrel{\meqref{MG-2}}{=}&\big(a\circ_G(a^\dagger\rightharpoonup b)\big) \circ_G\Big(\big(a\circ_G(a^\dagger\rightharpoonup b)\big)^\dagger\rightharpoonup c)\Big)\\
&=&(a\cdot_G b)\cdot_G c.
\end{eqnarray*}
Thus, we deduce that $(G,\cdot_G)$ is a group. Moreover, for all $a,b\in G$, we have
\begin{eqnarray*}
a\cdot_G(a\rhd_G b)&=&a\circ_G\big(a^\dagger\rightharpoonup(a\rightharpoonup b)\big)=a\circ_G b,\\
a\rhd_G (b\rhd_G c)&\stackrel{\meqref{MG-2}}{=}&(a\circ_G b)\rhd_G c=(a\cdot_G(a\rhd_G b))\rhd_G c,
\end{eqnarray*}
which implies  that $(G,\cdot_G,\rhd_G)$ is a post-group.

Let $f$ be  a  homomorphism of braided
groups from $\big((G,\circ_G),R_G\big)$ to $\big((H,\circ_H),R_H\big)$. Then $f:(G,\circ_G)\lon (H,\circ_H)$ is a group homomorphism such that
\begin{eqnarray*}
f(a \rightharpoonup b)=f(a)\rightharpoonup f(b),\quad
f(a \leftharpoonup b)=f(a)\leftharpoonup f(b) \tforall a,b\in G.
\end{eqnarray*}
Furthermore, for all $a,b\in G$, we have
\begin{eqnarray*}
f(a\rhd_G b)&=&f(a\rightharpoonup b)=f(a)\rightharpoonup f(b)=f(a)\rhd_H f(b),\\
f(a\cdot_G b)&=&f\big(a\circ_G (a^\dagger\rightharpoonup b)\big)=f(a)\circ_G f(a^\dagger\rightharpoonup b)=f(a)\circ_G \Big(f(a)^\dagger\rightharpoonup f(b)\Big)=f(a)\cdot_H f(b),
\end{eqnarray*}
which implies that  $f$ is a homomorphism of post-groups from $(G,\cdot_G,\rhd_G)$ to $(H,\cdot_H,\rhd_H)$.
\end{proof}

Moreover, we have
\begin{thm}\mlabel{pgybef}
Proposition \mref{pgybe} defines a functor $\huaY:\PG\lon
\BG$, and Proposition \mref{ybepg} defines a functor $\huaP:\BG\lon
  \PG$ that is the inverse of $\huaY$, giving an isomorphism between the categories $\PG$ and $\BG$.
\end{thm}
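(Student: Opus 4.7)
The plan is to verify that the two assignments defined in Propositions~\mref{pgybe} and~\mref{ybepg} are functors and that they are mutually inverse on both objects and morphisms. Functoriality is largely packaged in the two propositions already: Proposition~\mref{pgybe} sends a post-group $(G,\cdot_G,\rhd_G)$ to a braided group $\big((G,\circ_G),R_G\big)$ and a post-group homomorphism $\Psi$ to the same underlying map, which is verified to be a braided group homomorphism; dually for Proposition~\mref{ybepg}. Since both constructions preserve the underlying set-theoretic map, preservation of identities and composition is automatic, so I would first state this observation and focus the rest of the argument on objects.

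For $\huaP\circ\huaY=\Id_{\PG}$, I start with a post-group $(G,\cdot_G,\rhd_G)$, form $\huaY(G,\cdot_G,\rhd_G)=\big((G,\circ_G),R_G\big)$ with $a\rightharpoonup b=a\rhd_G b$ and $a\leftharpoonup b=(a\rhd_G b)^\dagger\circ_G a\circ_G b$, and then compute $\huaP$ of this braided group. The induced operations are $a\rhd' b=a\rightharpoonup b=a\rhd_G b$ and $a\cdot' b=a\circ_G(a^\dagger\rightharpoonup b)=a\cdot_G\big(a\rhd_G(a^\dagger\rhd_G b)\big)$. The weighted associativity~\meqref{Post-4} rewrites the inner expression as $\big(a\cdot_G(a\rhd_G a^\dagger)\big)\rhd_G b$, and then the identity $a\rhd_G a^\dagger=a^{-1}$ from~\meqref{reversible} together with~\meqref{Post-3} collapses this to $b$, yielding $a\cdot' b=a\cdot_G b$. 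Hence the post-group structure is recovered unchanged.

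For $\huaY\circ\huaP=\Id_{\BG}$, I begin with a braided group $\big((G,\circ_G),R_G\big)$, $R_G(a,b)=(a\rightharpoonup b,a\leftharpoonup b)$, form the post-group $(G,\cdot_G,\rhd_G)$ of Proposition~\mref{ybepg}, and then compute $\huaY$. One observation already contained in that proof is that the sub-adjacent group of the induced post-group coincides with $(G,\circ_G)$, since $a\cdot_G(a\rhd_G b)=a\circ_G\big(a^\dagger\rightharpoonup(a\rightharpoonup b)\big)=a\circ_G b$ by~\meqref{MG-2}. Thus the underlying group is preserved. The first component of $R'_G(a,b)$ is $a\rhd_G b=a\rightharpoonup b$, matching $R_G$, and the second component $(a\rhd_G b)^\dagger\circ_G a\circ_G b$ equals $a\leftharpoonup b$ by the braided group defining identity $(a\rightharpoonup b)\circ_G(a\leftharpoonup b)=a\circ_G b$. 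Therefore $R'_G=R_G$.

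The argument contains essentially no obstacle beyond bookkeeping; the only subtle point is the first direction, where the weighted associativity~\meqref{Post-4} is invoked together with~\meqref{reversible} and~\meqref{Post-3} to contract $a\rhd_G(a^\dagger\rhd_G b)$ to $b$. Once that identity is in place, both composites are the identity on objects, and since $\huaY$ and $\huaP$ act as the identity on underlying maps of morphisms, they are also the identity on morphisms. This yields the isomorphism of categories $\PG\cong\BG$.
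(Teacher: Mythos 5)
Your proof is correct and follows the same route as the paper, which simply asserts that Propositions \mref{pgybe} and \mref{ybepg} yield $(\huaP\huaY)(G,\cdot,\rhd)=(G,\cdot,\rhd)$ and $(\huaY\huaP)=\Id$ without writing out the computations. Your verification that $a\rhd_G(a^\dagger\rhd_G b)=b$ via \meqref{Post-4}, \meqref{reversible} and \meqref{Post-3}, and that $(a\rhd_G b)^\dagger\circ_G a\circ_G b=a\leftharpoonup b$ via the braided-group compatibility, supplies exactly the details the paper leaves implicit.
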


\begin{proof}
Let $(G,\cdot,\rhd)$ be a post-group. Propositions \mref{pgybe} and \mref{ybepg}
yield $(\huaP\huaY)(G,\cdot,\rhd)=(G,\cdot,\rhd)$. Let $\Psi$ be a
homomorphism of post-groups  from $(G,\cdot_G,\rhd_G)$ to
$(H,\cdot_H,\rhd_H)$. Propositions \mref{pgybe} and
\mref{ybepg} yield $(\huaP\huaY)(\Psi)=\Psi$. Thus, $\huaP\huaY=\Id$.
Similarly, $\huaY\huaP=\Id$. Therefore, the functor $\huaY$ is the
inverse of the functor $\huaP$.
\end{proof}

By Theorem \mref{bgybe},   a braided group $(G,\sigma)$ is a non-degenerate braiding set by forgetting the group structure. So there is a forgetful functor
$$\huaU:\BG\lon \BS$$
from the category of braided groups to the category of non-degenerate braided sets.

Let $(X,R)$ be a non-degenerate braided set. The structure group $G_X$ is defined to be the group generated by the elements of $X$ with the defining relations
\begin{eqnarray}
xy=uv\quad  \mbox{when} \quad R(x,y)=(u,v)\tforall x,y\in X.
\end{eqnarray}
The construction of the structure group $G_X$ was given by Etingof-Schedler-Soloviev in \mcite{ESS}. Further, Lu-Yan-Zhu \mcite{LYZ}  extended the bijective map $R$ to a braided group structure $\bar{R}$ on the structure group $G_X$.
Moreover, Lu-Yan-Zhu's construction of the braided groups is functorial, i.e. there is a structure group functor $\huaF:\BS\lon\BG$   given by
\begin{eqnarray*}
\huaF(X,R)=(G_X,\bar{R}) \tforall (X,R)\in  \BS.
\end{eqnarray*}

\begin{thm}\mlabel{th:bg}{\rm \cite[Theorem 4]{LYZ}}
With the above notations,  the functor $\huaF$ is left adjoint to $\huaU$.
\end{thm}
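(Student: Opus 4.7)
The plan is to produce the adjunction by constructing the unit--counit data directly and checking naturality. The key observation is that for a non-degenerate braided set $(X,R)$, the structure group $G_X = \huaF(X,R)$ comes with a canonical map $\iota_X\colon X \to G_X$ (sending each generator to its equivalence class), and by \cite[Theorem 4]{LYZ}, this $\iota_X$ is a homomorphism of braided sets from $(X,R)$ to $\huaU(G_X,\bar R)$. I would designate this family $\{\iota_X\}_{(X,R)\in \BS}$ as the candidate unit of the adjunction.

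First I would describe the bijection on hom-sets. Given a braided group homomorphism $\phi\colon (G_X,\bar R) \to (G,\sigma)$, I set $\phi^\flat \coloneqq \phi\circ \iota_X\colon X \to G$; since $\iota_X$ is a braided set homomorphism and $\huaU\phi$ preserves $\bar R$, the composite $\phi^\flat$ is a morphism in $\BS$. Conversely, given a braided set homomorphism $f\colon (X,R)\to \huaU(G,\sigma)$, I need to extend $f$ to a group homomorphism $f^\sharp\colon G_X \to G$. By the universal property of the group presented by generators $X$ and relations $xy = uv$ (whenever $R(x,y)=(u,v)$), it suffices to check that $f$ respects these relations in $G$, i.e.\ that $f(x)\cdot_G f(y) = f(u)\cdot_G f(v)$ whenever $R(x,y) = (u,v)$. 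This is precisely where the braided group axiom $(a\rightharpoonup b)(a\leftharpoonup b) = ab$ on $(G,\sigma)$ enters: the braiding $\sigma(f(x),f(y)) = (f(x)\rightharpoonup f(y),\ f(x)\leftharpoonup f(y))$ coincides with $(f(u),f(v))$ since $f$ is a braided-set morphism, and then the axiom yields $f(x)\cdot_G f(y) = f(u)\cdot_G f(v)$.

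Next I would verify that the extension $f^\sharp$ is actually a morphism in $\BG$, not merely in the category of groups. Writing $\bar R(a,b) = (a\rightharpoonup_X b, a\leftharpoonup_X b)$, one must show
\[
(f^\sharp \times f^\sharp)\circ \bar R = \sigma\circ (f^\sharp\times f^\sharp).
\]
On generators $x,y\in X$ this reduces to the braided-set compatibility of $f$. To propagate this identity to all of $G_X$, I would invoke the matched-pair axioms \meqref{MG-2}, \meqref{MG-5}, \meqref{MG-6} (and their analogues for $\leftharpoonup$), which express how the actions behave under group multiplication in each slot; since $f^\sharp$ is a group homomorphism on each slot, this propagates the equality to products and inverses of generators and hence to all of $G_X$. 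This is the main technical step, and is essentially the content of \cite[Theorem 4]{LYZ}.

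Finally I would verify that $\phi\mapsto \phi^\flat$ and $f\mapsto f^\sharp$ are mutually inverse: on the one hand $(f^\sharp)^\flat = f^\sharp\circ \iota_X = f$ by construction on generators; on the other hand $(\phi^\flat)^\sharp = \phi$ since both are group homomorphisms $G_X\to G$ that agree on the generating set $X$. Naturality in both $(X,R)$ and $(G,\sigma)$ is then routine: for a morphism of braided sets $g\colon (X,R)\to (Y,S)$ and a morphism of braided groups $\psi\colon (G,\sigma)\to (G',\sigma')$, the square relating $(\psi\circ\phi\circ \huaF(g))^\flat$ with $\huaU(\psi)\circ \phi^\flat\circ g$ commutes by associativity of composition together with the identity $\huaF(g)\circ \iota_X = \iota_Y\circ g$, which holds by the definition of $\huaF$ on morphisms. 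I expect the principal obstacle to be the compatibility check in the second paragraph; everything else is formal once that is in hand.
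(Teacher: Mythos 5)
The paper offers no proof of this statement at all: it is quoted verbatim from Lu--Yan--Zhu as \cite[Theorem 4]{LYZ}, so there is nothing in the text to compare your argument against line by line. That said, your proposal is a correct reconstruction of the standard universal-property argument, and the two genuinely nontrivial points are exactly where you place them. First, the hom-set bijection in the forward direction rests on the braided-group axiom $(a\rightharpoonup b)(a\leftharpoonup b)=ab$ in the target $(G,\sigma)$: combined with $(f\times f)\circ R=\sigma\circ(f\times f)$ on generators, it shows $f(x)\cdot_G f(y)=f(u)\cdot_G f(v)$ whenever $R(x,y)=(u,v)$, so $f$ kills precisely the defining relations of $G_X$ and extends to $f^\sharp$. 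Second, propagating $(f^\sharp\times f^\sharp)\circ\bar R=\sigma\circ(f^\sharp\times f^\sharp)$ from generators to all of $G_X$ does follow from the matched-pair axioms \meqref{MG-2}, \meqref{MG-3}, \meqref{MG-5}, \meqref{MG-6} by induction on word length; you should add a sentence handling inverses of generators explicitly (e.g.\ $a^{-1}\rightharpoonup h$ is determined by $a\rightharpoonup(-)$ being invertible via \meqref{MG-1}--\meqref{MG-2}, and similarly for $\leftharpoonup$), since $G_X$ is generated by $X$ as a group, not as a monoid. One caveat worth stating: the well-definedness of $\bar R$ on $G_X$ and the fact that $\iota_X$ is a braided-set morphism are themselves the substantial content of Lu--Yan--Zhu's construction, which both you and the paper take as given when treating $\huaF$ as an already-defined functor; your proof is therefore a proof of the adjunction modulo that construction, which is the honest scope of the claim.
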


Moreover,  we have the functor  $\huaU\circ \huaY:\PG\lon
\BS$ from the category of post-groups to the category of non-degenerate braided sets, and the functor $\huaP\circ \huaF:\BS\lon
\PG$ from the category of non-degenerate braided sets to the category of post-groups.

\begin{thm}\mlabel{bspg}
With the above notations,  the functor $\huaP\circ \huaF$ is left adjoint to $\huaU\circ \huaY$.
\end{thm}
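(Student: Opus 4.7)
The plan is to deduce the adjunction by composing the adjunction $\huaF \dashv \huaU$ from Theorem~\mref{th:bg} with the categorical isomorphism between $\PG$ and $\BG$ established in Theorem~\mref{pgybef}. Concretely, for a non-degenerate braided set $(X,R)$ and a post-group $(G,\cdot,\rhd)$, I aim to construct a bijection
\begin{equation*}
\Hom_{\PG}\big((\huaP\circ\huaF)(X,R),\,(G,\cdot,\rhd)\big)\;\cong\;\Hom_{\BS}\big((X,R),\,(\huaU\circ\huaY)(G,\cdot,\rhd)\big)
\end{equation*}
that is natural in both variables.

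First, I would invoke Theorem~\mref{pgybef}: since $\huaY$ and $\huaP$ are mutually inverse functors, for any braided group $(H,\sigma)$ the assignment $\Psi\mapsto\huaY(\Psi)$ yields a natural bijection
\begin{equation*}
\Hom_{\PG}(\huaP(H,\sigma),(G,\cdot,\rhd))\;\xrightarrow{\sim}\;\Hom_{\BG}((H,\sigma),\huaY(G,\cdot,\rhd)),
\end{equation*}
with inverse $\Phi\mapsto\huaP(\Phi)$; in particular, $\huaP$ is itself left adjoint to $\huaY$ (and vice versa) with identity unit and counit.

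Next, I would apply Theorem~\mref{th:bg}, which supplies a natural bijection
\begin{equation*}
\Hom_{\BG}(\huaF(X,R),(H,\sigma))\;\xrightarrow{\sim}\;\Hom_{\BS}((X,R),\huaU(H,\sigma)).
\end{equation*}
Specializing the first bijection at $(H,\sigma)=\huaF(X,R)$ and the second at $(H,\sigma)=\huaY(G,\cdot,\rhd)$ and composing them gives the desired bijection
\begin{equation*}
\Hom_{\PG}(\huaP\huaF(X,R),(G,\cdot,\rhd))\;\cong\;\Hom_{\BG}(\huaF(X,R),\huaY(G,\cdot,\rhd))\;\cong\;\Hom_{\BS}((X,R),\huaU\huaY(G,\cdot,\rhd)).
\end{equation*}
Naturality in $(X,R)$ and in $(G,\cdot,\rhd)$ follows from the naturality of each of the two constituent bijections, since the composition of natural transformations is natural.

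There is no serious obstacle: the argument rests on the general categorical principle that adjunctions compose and that a pair of mutually inverse functors forms an adjoint equivalence on the nose. The only point to be mindful of is that we apply the isomorphism of categories on the codomain side of the Hom, pairing $\huaP$ on the left of $\huaF$ with $\huaY$ on the right of $\huaU$, which is precisely what is needed to turn $\huaF\dashv\huaU$ into $\huaP\circ\huaF\dashv\huaU\circ\huaY$.
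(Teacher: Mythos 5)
Your proposal is correct and follows essentially the same route as the paper, whose proof simply states that the result follows directly from Theorems~\mref{pgybef} and~\mref{th:bg}; you have merely made explicit the composite hom-set bijection
$\Hom_{\PG}(\huaP\huaF(X,R),G)\cong\Hom_{\BG}(\huaF(X,R),\huaY(G))\cong\Hom_{\BS}((X,R),\huaU\huaY(G))$
that the paper leaves implicit. The pairing of $\huaP$ on the left with $\huaY$ on the right is exactly the intended argument.
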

\begin{proof}
It follows directly from Theorems \mref{pgybef} and
 \mref{th:bg}.
\end{proof}

\subsection{Post-groups and skew-left braces}\mlabel{post-group-skew-left-brace}
We now show that there is a one-to-one correspondence between the
sets of post-groups and skew-left braces.

\begin{defi}{\rm \mcite{GV}}
A skew-left brace  $(G,\circ,\cdot)$ consists of a group $(G,\cdot)$ and a group $(G,\circ)$ such that
\vspace{-.1cm}
\begin{eqnarray}
a\circ (b\cdot c)=(a\circ b)\cdot a^{-1}\cdot (a\circ c)\tforall a,b,c\in G.
\end{eqnarray}
Here $a^{-1}$ is the inverse of $a$ in the group $(G,\cdot)$.

Let  $(G,\circ_G,\cdot_G)$ and $(H,\circ_H,\cdot_H)$ be skew-left braces. A homomorphism from $(G,\circ_G,\cdot_G)$ to $(H,\circ_H,\cdot_H)$ is a map $\Psi:G\to H$ such that
\vspace{-.1cm}
\begin{equation}
\mlabel{brace-homo-1}\Psi(a\circ_G b)=\Psi(a)\circ_H \Psi(b),\quad
\Psi(a\cdot_G b)=\Psi(a)\cdot_H \Psi(b) \tforall a,b\in G.
\end{equation}
\end{defi}

Skew-left braces and their homomorphisms form a category $\SLB$.

By Theorem \mref{pro:subad}, a post-group $(G,\cdot,\rhd)$ gives rise to a sub-adjacent group $(G,\circ)$. Moreover, it leads to a  skew-left brace.

\begin{pro}\mlabel{post-to-brace}
Let $(G,\cdot,\rhd)$ be a post-group. Then $(G,\circ,\cdot)$ is a skew-left brace.
Moreover, let $\Psi:(G,\cdot_G,\rhd_G)\to (H,\cdot_H,\rhd_H)$ be a homomorphism of post-groups. Then $\Psi$ induces a homomorphism of skew-left braces from $(G,\circ_G,\cdot_G)$ to $(H,\circ_H,\cdot_H)$.
\end{pro}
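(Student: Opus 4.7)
The plan is to verify directly the skew-left brace identity
$$a\circ(b\cdot c)=(a\circ b)\cdot a^{-1}\cdot (a\circ c)$$
using only the definition $a\circ b = a\cdot(a\rhd b)$ from \eqref{eq:subad-com} together with the fact that $L^\rhd_a$ is an automorphism of $(G,\cdot)$, i.e. \eqref{Post-2}. Indeed, expanding the left-hand side yields
$$a\circ(b\cdot c)=a\cdot\bigl(a\rhd(b\cdot c)\bigr)=a\cdot\bigl((a\rhd b)\cdot(a\rhd c)\bigr),$$
while expanding the right-hand side yields
$$(a\circ b)\cdot a^{-1}\cdot(a\circ c)=\bigl(a\cdot(a\rhd b)\bigr)\cdot a^{-1}\cdot\bigl(a\cdot(a\rhd c)\bigr)=a\cdot(a\rhd b)\cdot(a\rhd c),$$
and the two sides coincide by associativity of $\cdot$. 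Since $(G,\cdot)$ is already a group by assumption and $(G,\circ)$ is a group by Theorem~\ref{pro:subad}\eqref{it:subad1}, this establishes that $(G,\circ,\cdot)$ is a skew-left brace.

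For the functoriality statement, I would simply combine two facts. On one hand, a homomorphism of post-groups $\Psi$ is, by definition \eqref{Post-homo-1}, a group homomorphism for the dot operation. On the other hand, Theorem~\ref{pro:subad}\eqref{it:subad3} shows that $\Psi$ is a group homomorphism from $(G,\circ_G)$ to $(H,\circ_H)$ as well. Together these give \eqref{brace-homo-1}, so $\Psi$ is a homomorphism of skew-left braces.

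There is no real obstacle here: the entire argument is a short bookkeeping exercise because the sub-adjacent group and its compatibility with $\cdot$ have already been packaged in Theorem~\ref{pro:subad}. The only ``content'' is the one-line telescoping verification of the skew-left brace axiom, which reduces the mixed identity to \eqref{Post-2}. The subtler half of the correspondence (recovering $\rhd$ and hence a post-group structure from a skew-left brace) is the converse direction and presumably appears next in the paper; that is where one would need to define $a\rhd b := a^{-1}\cdot(a\circ b)$ and check the weighted associativity \eqref{Post-4}, but it is not required for the current proposition.
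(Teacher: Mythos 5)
Your proof is correct and follows essentially the same route as the paper: the brace identity is verified by expanding both sides via $a\circ b=a\cdot(a\rhd b)$ and applying \eqref{Post-2}, and the homomorphism statement follows from \eqref{Post-homo-1} together with Theorem~\ref{pro:subad}~(iii). The only cosmetic difference is that the paper re-derives the compatibility of $\Psi$ with $\circ$ inline rather than citing Theorem~\ref{pro:subad}~(iii), which is the same computation.
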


\begin{proof}
By the definition of the sub-adjacent group $(G,\circ)$, for $a,b,c\in G$,  we have
\begin{eqnarray*}
a\circ (b\cdot c)&=&a\cdot(a\rhd (b\cdot c))\stackrel{\meqref{Post-2}}{=}a\cdot((a\rhd b)\cdot (a\rhd c))\\
 &=&(a\cdot(a\rhd b))\cdot a^{-1}\cdot a\cdot (a\rhd c)=(a\circ b)\cdot a^{-1}\cdot (a\circ c),
\end{eqnarray*}
which implies that $(G,\circ,\cdot)$ is a skew-left brace.

Moreover, we have
$$
\Psi (a\circ_G b)=\Psi (a\cdot_G(a\rhd_G
b))\stackrel{\meqref{Post-homo-1}}{=}\Psi (a)\cdot_H(\Psi
(a)\rhd_H \Psi (b))=\Psi (a)\circ_H\Psi(b) \tforall a,b\in
G.
$$
Thus,   $\Psi$ is a homomorphism of
skew-left braces from $(G,\circ_G,\cdot_G)$ to $(H,\circ_H,\cdot_H)$.
\end{proof}

\begin{ex}\label{finite-left-brace}
Consider the pre-group $(\huaB_{\mathbb F_p}^n,\cdot,\rhd)$ given in Example \mref{truncation-finite-pre-groups}. Then $(\huaB_{\mathbb F_p}^n,\circ,\cdot)$ is a finite left brace. More precisely, we have
\begin{eqnarray*}
&&(a\cdot b)(\emptyset)=1,~(a\cdot b)(\omega)=a(\omega)+b(\omega),  \\
&&(a\circ b)(\emptyset)=1,~(a\circ b)(\omega)=a(\omega)+\sum_{c\in AC(\omega)}a\big(P^c(\omega)\big)b\big(R^c(\omega)\big),\tforall \omega\in \huaT^n.
\end{eqnarray*}
\end{ex}

\begin{pro}\mlabel{brace-to-post}
Let $(G,\circ,\cdot)$ be a skew-left brace. Define a multiplication $\rhd:G\times G\lon G$ by
\begin{eqnarray}
a\rhd b\coloneqq a^{-1}\cdot(a\circ b) \tforall a,b\in G,
\end{eqnarray}
where $a^{-1}$ is the inverse of $a$ in $(G,\cdot)$. Then $(G,\cdot,\rhd)$ is a post-group.
Moreover, let $\Psi$ be  a  homomorphism of skew-left braces from $(G,\circ_G,\cdot_G)$ to $(H,\circ_H,\cdot_H)$. Then $\Psi$ is a homomorphism of post-groups from $(G,\cdot_G,\rhd_G)$ to $(H,\cdot_H,\rhd_H)$.
\end{pro}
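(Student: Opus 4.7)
The plan is to derive both post-group axioms directly from the skew-left brace identity $a \circ (b \cdot c) = (a \circ b) \cdot a^{-1} \cdot (a \circ c)$, using repeatedly the key observation $a \cdot (a \rhd b) = a \circ b$, which is immediate from the definition of $\rhd$.

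First I would verify that $L^\rhd_a$ is an automorphism of $(G,\cdot)$. Multiplicativity is a one-line application of the brace axiom:
\[
a \rhd (b \cdot c) = a^{-1} \cdot \bigl(a \circ (b \cdot c)\bigr) = a^{-1} \cdot (a \circ b) \cdot a^{-1} \cdot (a \circ c) = (a \rhd b) \cdot (a \rhd c).
\]
Bijectivity follows from writing $L^\rhd_a$ as the composition of left translation by $a$ in $(G,\circ)$ followed by left translation by $a^{-1}$ in $(G,\cdot)$, each a bijection; the inverse sends $c$ to $a^\dagger \circ (a \cdot c)$, where $a^\dagger$ denotes the inverse of $a$ in $(G,\circ)$.

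Next, for the weighted associativity $\bigl(a\cdot(a\rhd b)\bigr)\rhd c = a\rhd(b\rhd c)$, the identity $a \cdot (a \rhd b) = a \circ b$ collapses the left-hand side to $(a \circ b) \rhd c = (a \circ b)^{-1} \cdot \bigl((a \circ b) \circ c\bigr)$. For the right-hand side, I substitute the definition of $b \rhd c$ and apply the brace axiom to the factors $b^{-1}$ and $b \circ c$, obtaining
\[
a \rhd (b \rhd c) = a^{-1} \cdot (a \circ b^{-1}) \cdot a^{-1} \cdot \bigl(a \circ (b \circ c)\bigr).
\]
The two sides then match after using associativity of $\circ$ together with the auxiliary identity $a^{-1} \cdot (a \circ b^{-1}) \cdot a^{-1} = (a \circ b)^{-1}$, which I would prove beforehand by setting $c = b^{-1}$ in the brace axiom. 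That step needs $e$ to be the common unit of $\cdot$ and $\circ$, a standard consequence of the brace axiom obtained by specializing $b = c$ to the unit of $\cdot$.

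The functoriality claim is then immediate: for a skew-left brace homomorphism $\Psi$,
\[
\Psi(a \rhd_G b) = \Psi\bigl(a^{-1} \cdot_G (a \circ_G b)\bigr) = \Psi(a)^{-1} \cdot_H \bigl(\Psi(a) \circ_H \Psi(b)\bigr) = \Psi(a) \rhd_H \Psi(b),
\]
while preservation of $\cdot$ is built into the definition of a skew-left brace homomorphism. The argument is conceptually straightforward; the only subtlety, and the one I would nail down first, is the bookkeeping around the auxiliary identity $a^{-1} \cdot (a \circ b^{-1}) \cdot a^{-1} = (a \circ b)^{-1}$, since it involves manipulating inverses across two distinct group structures on the same set.
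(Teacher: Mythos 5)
Your proof is correct, but it takes a different route from the paper. The paper disposes of the first assertion in one line by citing Guarnieri--Vendramin (\cite[Corollary 1.10]{GV}), which states that $L^\rhd:G\to\Aut(G)$ is an action of $(G,\circ)$ on $(G,\cdot)$; automorphism-ness gives axiom \meqref{Post-2}, and the action property $L^\rhd_{a\circ b}=L^\rhd_a\circ L^\rhd_b$ combined with $a\circ b=a\cdot(a\rhd b)$ gives \meqref{Post-4}. You instead verify everything from scratch: multiplicativity of $L^\rhd_a$ is your one-line application of the brace axiom; bijectivity comes from factoring $L^\rhd_a$ as a $\circ$-translation followed by a $\cdot$-translation; and the weighted associativity reduces, via associativity of $\circ$, to the auxiliary identity $a^{-1}\cdot(a\circ b^{-1})\cdot a^{-1}=(a\circ b)^{-1}$, which you correctly obtain by setting $c=b^{-1}$ in the brace axiom (after checking that the units of $\cdot$ and $\circ$ coincide, which your specialization $b=c=e$ does give). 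I checked each of these steps and they all go through; in effect you have reproved the cited corollary of \mcite{GV}, so your argument is self-contained where the paper's is not, at the cost of the explicit inverse-bookkeeping you flag. The treatment of the homomorphism statement is identical in both.
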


\begin{proof}
Let $(G,\circ,\cdot)$ be a skew-left brace. It is proved in \cite[Corollary 1.10]{GV} that the left multiplication $L^\rhd:G\lon \Aut(G)$ defined by $L^\rhd_ab=a\rhd b$ for all $a,b\in G$ is an action of $(G,\circ)$  on $(G,\cdot)$. Thus, we obtain that $(G,\cdot,\rhd)$ is a post-group.

For the second conclusion, for all $a,b\in G$, we have
$$
\Psi (a\rhd_G b)=\Psi (a^{-1}\cdot_G(a\circ_G b))\stackrel{\meqref{brace-homo-1}}{=}\Psi (a)^{-1}\cdot_H(\Psi (a)\circ_H \Psi (b))=\Psi (a)\rhd_H\Psi(b),
$$
showing that $\Psi$ is a homomorphism of post-groups.  \end{proof}

\begin{thm}\mlabel{functor} Proposition \mref{post-to-brace} gives a functor $\FF:\PG\lon
\SLB$, and Proposition  \mref{brace-to-post}
gives  a functor $\GG:\SLB\lon
  \PG$. Moreover, they give an isomorphism between the categories $\PG$  and $\SLB$.
\end{thm}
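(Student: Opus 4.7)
The plan is to verify the claim in three short steps: (i) check that $\FF$ and $\GG$ are well-defined functors, (ii) compute $\GG\circ\FF$ on objects of $\PG$, and (iii) compute $\FF\circ\GG$ on objects of $\SLB$; the morphism side is handled once we observe that both functors act as the identity on the underlying set-level map.

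First, Proposition~\mref{post-to-brace} assigns to each post-group $(G,\cdot,\rhd)$ the skew-left brace $(G,\circ,\cdot)$ (with $\circ$ the sub-adjacent multiplication) and to each post-group homomorphism $\Psi$ the same underlying map, which is already shown to be a brace homomorphism. Since the identity post-group homomorphism goes to the identity brace homomorphism and composition is preserved at the level of set maps, $\FF$ is a functor; the same verification works for $\GG$ via Proposition~\mref{brace-to-post}.

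Next, I would compute the two round trips on objects. Starting from a post-group $(G,\cdot,\rhd)$, applying $\FF$ produces the skew-left brace with $\circ$-multiplication $a\circ b=a\cdot(a\rhd b)$; applying $\GG$ then yields a post-group $(G,\cdot,\rhd')$ with
\[
a\rhd' b=a^{-1}\cdot(a\circ b)=a^{-1}\cdot a\cdot(a\rhd b)=a\rhd b,
\]
so $\GG\circ\FF=\Id_{\PG}$ on objects. Conversely, starting from a skew-left brace $(G,\circ,\cdot)$, the functor $\GG$ produces the post-group with $a\rhd b=a^{-1}\cdot(a\circ b)$; applying $\FF$ gives a skew-left brace whose new multiplication is
\[
a\circ' b=a\cdot(a\rhd b)=a\cdot a^{-1}\cdot(a\circ b)=a\circ b,
\]
so $\FF\circ\GG=\Id_{\SLB}$ on objects.

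Finally, on morphisms both $\FF$ and $\GG$ keep the underlying map $\Psi:G\to H$ unchanged (only the interpretation of the structure on source and target is altered), hence $\GG\FF(\Psi)=\Psi=\FF\GG(\Psi)$ trivially. Putting these observations together, $\FF$ and $\GG$ are mutually inverse functors, establishing the asserted isomorphism of categories. There is no real obstacle here beyond the two one-line identities above; the only thing to be careful about is to use the correct inverse (the $(G,\cdot)$-inverse $a^{-1}$, not the sub-adjacent inverse $a^\dagger$) when unwinding $\rhd$ from $\circ$, which is exactly the convention fixed in Proposition~\mref{brace-to-post}.
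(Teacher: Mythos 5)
Your proposal is correct and follows essentially the same route as the paper: both establish the isomorphism by checking that the two round trips are the identity on objects (via the two one-line cancellations $a^{-1}\cdot a\cdot(a\rhd b)=a\rhd b$ and $a\cdot a^{-1}\cdot(a\circ b)=a\circ b$) and that both functors fix the underlying map on morphisms. Your version just spells out these computations, which the paper leaves implicit by citing Propositions \mref{post-to-brace} and \mref{brace-to-post}.
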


\begin{proof}
Let $(G,\cdot,\rhd)$ and $(H,\cdot_H,\rhd_H)$ be post-groups, and
$\Psi$ be a homomorphism of post-groups  from $(G,\cdot_G,\rhd_G)$
to $(H,\cdot_H,\rhd_H)$.  By Propositions \mref{post-to-brace} and
\mref{brace-to-post}, we have
$\GG\FF(G,\cdot,\rhd)=(G,\cdot,\rhd)$, and $(\GG\FF)(\Psi)=\Psi$.
Thus, $\GG\FF=\Id$. Similarly, $\FF\GG=\Id$. Therefore, the
functor $\FF$ is the inverse of the functor $\GG$.
\end{proof}

\section{Differentiations of post-Lie groups and post-Lie algebras}\mlabel{post-group-diff}

In this section, we take the base field to be $\mathbb R$. We show that a post-Lie group gives rise to a post-Lie algebra via differentiation, which justifies the terminology of post-groups. In addition, the relations between post-Lie groups and relative Rota-Baxter operators given by Proposition \mref{post-group-to-RB} and Theorem \mref{thm:RBPost} can also be differentiated to give relations between post-Lie algebras and relative Rota-Baxter operators on Lie algebras.

Recall~\mcite{Val} that a {\bf post-Lie algebra} $(\g,[\cdot,\cdot]_\g,\triangleright)$ consists of a Lie algebra $(\g,[\cdot,\cdot]_\g)$ and a multiplication $\triangleright:\g\otimes\g\to\g$ such that
\begin{eqnarray}
\mlabel{Posta-1}x\triangleright[y,z]_\g&=&[x\triangleright y,z]_\g+[y,x\triangleright z]_\g,\\
\mlabel{Posta-2}([x,y]_\g+x\triangleright y-y\triangleright x) \triangleright z&=&x\triangleright(y\triangleright z)-y\triangleright(x\triangleright z)\tforall x, y, z\in \g.
\end{eqnarray}
If the Lie bracket $[\cdot,\cdot]_\g$ in a post-Lie algebra $(\g,[\cdot,\cdot]_\g,\triangleright)$ is zero, then $(\g,\triangleright)$ becomes a  pre-Lie algebra. Thus,  a post-Lie algebra can be viewed as a nonabelian generalization of a pre-Lie algebra \cite{Ba,Bur-1,Ma}.

Let $(\g,[\cdot,\cdot]_\g,\triangleright_\g)$ and $(\h,[\cdot,\cdot]_\h,\triangleright_\h)$ be post-Lie algebras. A homomorphism from $(\g,[\cdot,\cdot]_\g,\triangleright_\g)$ to $(\h,[\cdot,\cdot]_\h,\triangleright_\h)$ is a Lie algebra homomorphism $\psi:\g\to \h$ such that
$$
\psi(x\triangleright_\g y)=\psi(x)\triangleright_\h
\psi(y) \tforall x,y\in \g.
$$
Post-Lie algebras together with post-Lie algebra homomorphisms form a category.

We list some properties of post-Lie algebras for application \cite{BGN}.
\begin{lem}\mlabel{lem:desLie}
  A {post-Lie algebra} $(\g,[\cdot,\cdot]_\g,\triangleright)$ gives rise to a new Lie algebra $(\g,[\cdot,\cdot]_\triangleright)$, called the {\bf sub-adjacent Lie algebra} and denoted by $\g_\triangleright$, where the Lie bracket $[\cdot,\cdot]_\triangleright$ is defined by
\begin{equation}
 [x,y]_\triangleright=[x,y]_\g+x \triangleright y- y \triangleright x\tforall x,y\in\g.
\end{equation}
Moreover, \meqref{Posta-2} is equivalent to the condition
  \begin{equation}\mlabel{eq:pLequi}
  [x,y]_\triangleright\triangleright z=x\triangleright(y\triangleright z)-y\triangleright(x\triangleright z)\tforall x,y,z\in \g.
  \end{equation}
  Consequently, the map $L^\triangleright:\g\to \Der(\g)$ defined by $L^\triangleright_xy=x  \triangleright y$ gives rise to an action of the sub-adjacent Lie algebra $\g_\triangleright$ on the Lie algebra $(\g,[\cdot,\cdot]_\g)$.
\end{lem}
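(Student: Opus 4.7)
The plan is to verify the three assertions in turn, all by direct computation from \meqref{Posta-1} and \meqref{Posta-2}. I would establish them in the order that minimizes repeated work: first the equivalence \meqref{Posta-2}$\iff$\meqref{eq:pLequi}, then the Jacobi identity for $[\cdot,\cdot]_\triangleright$, and finally the action property of $L^\triangleright$.

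\textbf{Step 1 (Equivalence).} Handling the equivalence of \meqref{Posta-2} and \meqref{eq:pLequi} first is essentially free: by the very definition $[x,y]_\triangleright = [x,y]_\g + x\triangleright y - y\triangleright x$, the left-hand sides of the two equations coincide symbol-for-symbol, so they are literally the same identity. Establishing this up front lets me use the more compact form \meqref{eq:pLequi} in the Jacobi calculation.

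\textbf{Step 2 (Lie algebra).} Antisymmetry of $[\cdot,\cdot]_\triangleright$ is immediate from $[y,x]_\g=-[x,y]_\g$ and the obvious cancellation $(x\triangleright y - y\triangleright x)+(y\triangleright x - x\triangleright y)=0$. For the Jacobi identity, I would expand the cyclic sum $\sum_{\mathrm{cyc}}[[x,y]_\triangleright,z]_\triangleright$ using the definition of $[\cdot,\cdot]_\triangleright$ twice, and then group the resulting terms into three families: the terms involving only $[\cdot,\cdot]_\g$, which vanish by the Jacobi identity in $(\g,[\cdot,\cdot]_\g)$; the mixed terms $[x\triangleright y,z]_\g$, $[y\triangleright x,z]_\g$ and $z\triangleright [x,y]_\g$ together with their cyclic companions, which pair up and cancel by the derivation identity \meqref{Posta-1}; and the pure triangle terms $([x,y]_\g + x\triangleright y - y\triangleright x)\triangleright z$ and $z\triangleright(x\triangleright y)$, which by \meqref{eq:pLequi} collapse to iterated $\triangleright$-products of the form $x\triangleright(y\triangleright z)-y\triangleright(x\triangleright z)$ that cancel under cyclic summation.

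\textbf{Step 3 (Action).} Equation \meqref{Posta-1} states exactly that each $L^\triangleright_x$ is a derivation of $(\g,[\cdot,\cdot]_\g)$, so $L^\triangleright$ takes values in $\Der(\g)$. The reformulation \meqref{eq:pLequi} rewrites as $L^\triangleright_{[x,y]_\triangleright} = [L^\triangleright_x,L^\triangleright_y]$ in $\gl(\g)$, giving that $L^\triangleright:\g_\triangleright\to\Der(\g)$ is a Lie algebra homomorphism, i.e. an action. The main (though routine) obstacle is the bookkeeping in the Jacobi computation; the guiding principle is that each of \meqref{Posta-1} and \meqref{Posta-2} enters the argument exactly once, and the roles they play are dual in the sense that \meqref{Posta-1} handles the terms where $\triangleright$ sits inside a $[\cdot,\cdot]_\g$, while \meqref{eq:pLequi} handles the terms where $\triangleright$ is iterated.
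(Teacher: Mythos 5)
Your proposal is correct. The paper states this lemma without proof, citing \mcite{BGN}, so there is no in-text argument to compare against; your direct verification is the standard one, and each step checks out --- the equivalence of \meqref{Posta-2} and \meqref{eq:pLequi} is indeed tautological from the definition of $[\cdot,\cdot]_\triangleright$, the three families of terms in the cyclic Jacobi sum do cancel exactly as you describe (the $[\cdot,\cdot]_\g$-only terms by Jacobi in $\g$, the mixed terms after one application of \meqref{Posta-1} to each $z\triangleright[x,y]_\g$, and the iterated-$\triangleright$ terms pairwise under the cyclic sum), and the action statement is precisely the reformulation $L^\triangleright_{[x,y]_\triangleright}=[L^\triangleright_x,L^\triangleright_y]$ together with \meqref{Posta-1} saying $L^\triangleright_x\in\Der(\g)$.
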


\begin{defi} A post-group $(G,\cdot,\rhd)$ is called a {\bf post-Lie group} if $(G,\cdot)$ is a Lie group and $\rhd$ is a smooth map. 
\end{defi}	

Let $(\g,[\cdot,\cdot]_\g)$ be the Lie algebra of the Lie group $(G,\cdot)$. Denote by $\Aut(\g)$ and $\Der(\g)$ the Lie group of automorphisms and the Lie algebra of derivations on the Lie algebra $(\g,[\cdot,\cdot]_\g)$ respectively.  Let $\exp:\g\to G$ denote the exponential map. Then the relation between the Lie bracket $[\cdot,\cdot]_\g$ and the Lie group multiplication is given by the following fundamental formula:
\begin{equation}\mlabel{eq:expo}
  [u,v]_\g=\frac{d^2}{dt ds}\,\bigg|_{t,s=0}\exp(tu)\exp(sv)\exp(-tu)\tforall u,v\in\g.
\end{equation}

Since $L^\rhd_a\in\Aut(G)$, it follows that $(L^\rhd_a)_{*e}\in \Aut(\g)$. Thus we obtain a map, still denoted by $L^\rhd$, from $G$ to $\Aut(\g)$. Then taking the differentiation, we obtain a map $L^\rhd_{*e}: \g\to \Der(\g)$. The above process can be summarized by the following diagram:
\begin{equation}\mlabel{eq:relation}
\begin{split}
\small{
 \xymatrix{G \ar[rr]^{L^\rhd}\ar[d]_{\text{differentiation}} &  &  \Aut(\g) \ar[d]^{\text{differentiation}}  \\
 \g \ar[rr]^{ L^\rhd_{*e} } & &\Der(\g).}
}
\end{split}
\end{equation}
Define $\triangleright:\g\otimes \g\to \g$ by 
\begin{equation}\mlabel{eq:diff}
  x\triangleright y= L^\rhd_{*e} (x)(y)=\frac{d}{dt}\bigg|_{t=0} L^\rhd_{\exp(tx)}y=\frac{d}{dt}\bigg|_{t=0}\frac{d}{ds}\bigg|_{s=0}L^\rhd_{\exp(tx)}\exp(sy).
\end{equation}

\begin{thm}\mlabel{thm:diffpL}
Let $(G,\cdot,\rhd)$ be a post-Lie group with the smooth multiplication $\rhd$. Then  $(\g,[\cdot,\cdot]_\g,\triangleright)$ is a post-Lie algebra.
Let $\Psi$ be a homomorphism of post-Lie groups  from $(G,\cdot_G,\rhd_G)$ to $(H,\cdot_H,\rhd_H)$. Then $\psi:=\Psi_{*e_G}$ is a homomorphism of the differentiated post-Lie algebras from $(\g,[\cdot,\cdot]_\g,\triangleright_\g)$ to $(\h,[\cdot,\cdot]_\h,\triangleright_\h)$.
Furthermore, the correspondences give a functor from the category of post-Lie groups to the category of post-Lie algebras.
\end{thm}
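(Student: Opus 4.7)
The plan is to derive the two identities defining a post-Lie algebra by differentiating the two defining relations of a post-group, guided by diagram \meqref{eq:relation}. Throughout, I will use that the sub-adjacent group $(G,\circ)$ from Theorem \mref{pro:subad} is itself a Lie group sharing the underlying manifold and the identity $e$ with $G$: the product $a\circ b=a\cdot(a\rhd b)$ is smooth by hypothesis, and a smooth-multiplication group on a manifold automatically has smooth inversion, with explicit formula $a^\dagger=(L^\rhd_a)^{-1}(a^{-1})$.

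To obtain \meqref{Posta-1} I would argue as follows. By \meqref{Post-2} every $L^\rhd_a$ is a Lie group automorphism of $(G,\cdot)$, so its differential at $e$ lies in $\Aut(\g,[\cdot,\cdot]_\g)$. Hence the composition in diagram \meqref{eq:relation} delivers a smooth map $L^\rhd:G\to\Aut(\g)$, and its differential at $e$ must land in the Lie algebra $\Der(\g)$. By the definition \meqref{eq:diff}, this differential is the map $x\mapsto L^\triangleright_x$ with $L^\triangleright_x(y)=x\triangleright y$, so each $L^\triangleright_x$ is a derivation of $(\g,[\cdot,\cdot]_\g)$, which is precisely \meqref{Posta-1}.

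For \meqref{Posta-2} I would instead work with its equivalent form \meqref{eq:pLequi}, namely $L^\triangleright_{[x,y]_\triangleright}=[L^\triangleright_x,L^\triangleright_y]$. The crucial observation is that \meqref{Post-4} is literally $L^\rhd_{a\circ b}=L^\rhd_a\circ L^\rhd_b$, so $L^\rhd:(G,\circ)\to\Aut(G,\cdot)$ is a Lie group homomorphism, whose differential at $e$ is a Lie algebra homomorphism $L^\triangleright:\mathrm{Lie}(G,\circ)\to\Der(\g)$. It then suffices to identify $\mathrm{Lie}(G,\circ)$ with $\g_\triangleright$. For this I will expand $\exp(tx)\circ\exp(sy)=\exp(tx)\cdot(\exp(tx)\rhd\exp(sy))$ to second order at $t=s=0$: since $\exp(tx)\rhd e=e$ by \meqref{Post-1} and $(L^\rhd_{\exp(tx)})_{*e}(y)=y+t(x\triangleright y)+O(t^2)$, the inner factor equals $\exp\bigl(sy+st(x\triangleright y)+O(s^2,t^2s)\bigr)$; the Baker--Campbell--Hausdorff formula for $\cdot$ then produces the bilinear coefficient $B(x,y)=x\triangleright y+\tfrac12[x,y]_\g$. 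Antisymmetrising gives $[x,y]_\circ=B(x,y)-B(y,x)=[x,y]_\g+x\triangleright y-y\triangleright x=[x,y]_\triangleright$, as required. Combined with the homomorphism property above, this yields \meqref{eq:pLequi}, hence \meqref{Posta-2}.

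The homomorphism and functoriality statements are comparatively routine. Standard Lie theory applied to $\Psi:(G,\cdot_G)\to(H,\cdot_H)$ ensures that $\psi=\Psi_{*e_G}$ is a Lie algebra morphism $(\g,[\cdot,\cdot]_\g)\to(\h,[\cdot,\cdot]_\h)$. The hypothesis $\Psi(a\rhd_G b)=\Psi(a)\rhd_H\Psi(b)$ rewrites as $\Psi\circ L^{\rhd_G}_a=L^{\rhd_H}_{\Psi(a)}\circ\Psi$; differentiating first in $b$ at $e_G$ and then setting $a=\exp_G(tx)$ and differentiating at $t=0$ produces $\psi(x\triangleright_\g y)=\psi(x)\triangleright_\h\psi(y)$. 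Functoriality is immediate since $\Id_{*e}=\Id$ and differentiation commutes with composition. The main obstacle in this whole program is the identification $\mathrm{Lie}(G,\circ)\cong\g_\triangleright$: a clean execution requires careful bookkeeping of the BCH expansion together with the first-order expansion of $L^\rhd_{\exp(tx)}$, though no new conceptual ingredient is needed beyond the identities \meqref{Post-1}--\meqref{Post-4} and the diagram \meqref{eq:relation}.
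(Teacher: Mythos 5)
Your proposal is correct, and it reaches \meqref{Posta-2} by a genuinely different organization than the paper's. For \meqref{Posta-1} the two arguments coincide ($L^\rhd_{*e}$ lands in $\Der(\g)$). For \meqref{Posta-2}, the paper verifies \meqref{eq:pLequi} by a bare-hands computation: it writes $x\triangleright(y\triangleright z)-y\triangleright(x\triangleright z)$ as a triple mixed derivative, substitutes \meqref{Post-4} into the subscript of $L^\rhd$, and extracts the $ts$-coefficient via the BCH formula. You instead read \meqref{Post-4} as saying that $a\mapsto (L^\rhd_a)_{*e}$ is a Lie group homomorphism from $(G,\circ)$ to $\Aut(\g)$ (which is Theorem \mref{pro:subad}~\meqref{it:subad2}), so its differential is automatically a Lie algebra homomorphism, and you reduce everything to the identification $\Lie(G,\circ)\cong\g_\triangleright$, obtained by expanding $a\circ b=a\cdot(a\rhd b)$ to second order in $\exp$-coordinates and antisymmetrising the bilinear cross-term. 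The underlying computation (BCH plus the first-order expansion of $L^\rhd_{\exp(tx)}$) is the same, but your packaging buys something: it establishes the identification of $\Lie(G,\circ)$ with $\g_\triangleright$ \emph{before} and independently of the post-Lie structure, whereas the paper proves this only afterwards (the square \meqref{eq:post-sub}) by a longer computation involving $\exp(tx)^\dagger$ and Lemma \mref{lem:diffi}; your BCH route avoids inversion entirely. The price is a few foundational points the paper's direct computation never has to touch --- that $(G,\circ)$ is a Lie group (smooth multiplication on a manifold forces smooth inversion, as you note) and that $a\mapsto(L^\rhd_a)_{*e}$ is smooth into the Lie group $\Aut(\g)$ --- all of which are standard. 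The homomorphism and functoriality parts match the paper's treatment.
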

\begin{proof}
Since $L^\rhd_{*e} (x)\in\Der(\g)$ for all $x\in\g$, it is obvious that \meqref{Posta-1} is satisfied.

Let $x,y,z\in \g$. By \meqref{eq:diff},
\meqref{Post-4} and the BCH formula, we obtain
  \begin{eqnarray*}
    &&x\triangleright(y\triangleright z)-y\triangleright(x\triangleright z)\\
    &=&\frac{d}{dt}\bigg|_{t=0}\frac{d}{ds}\bigg|_{s=0}\frac{d}{dr}\bigg|_{r=0}\Big(L^\rhd_{\exp(tx)}L^\rhd_{\exp(sy)}\exp(rz)-L^\rhd_{\exp(sy)}L^\rhd_{\exp(tx)}\exp(rz)\Big)\\
    &=&\frac{d}{dt}\bigg|_{t=0}\frac{d}{ds}\bigg|_{s=0}\frac{d}{dr}\bigg|_{r=0}\Big(L^\rhd_{\exp(tx)\cdot(\exp(tx)\rhd \exp(sy))}\exp(rz)-L^\rhd_{\exp(sy)\cdot(\exp(sy)\rhd\exp(tx))}\exp(rz)\Big)\\
        &=&\frac{d}{dt}\bigg|_{t=0}\frac{d}{ds}\bigg|_{s=0}\frac{d}{dr}\bigg|_{r=0}L^\rhd_{\exp(tx)\cdot   \exp(sy) }\exp(rz)+\frac{d}{dt}\bigg|_{t=0}\frac{d}{ds}\bigg|_{s=0}\frac{d}{dr}\bigg|_{r=0}L^\rhd_{ \exp(tx)\rhd \exp(sy) }\exp(rz)\\
    &&-\frac{d}{dt}\bigg|_{t=0}\frac{d}{ds}\bigg|_{s=0}\frac{d}{dr}\bigg|_{r=0}L^\rhd_{\exp(sy)\cdot \exp(tx) }\exp(rz)-\frac{d}{dt}\bigg|_{t=0}\frac{d}{ds}\bigg|_{s=0}\frac{d}{dr}\bigg|_{r=0}L^\rhd_{ \exp(sy)\rhd\exp(tx) }\exp(rz)\\
      &=&\frac{d}{dt}\bigg|_{t=0}\frac{d}{ds}\bigg|_{s=0}\frac{d}{dr}\bigg|_{r=0}L^\rhd_{\exp(tx+sy+\frac{1}{2}ts[x,y]_\g+\cdots) }\exp(rz)+ (x\triangleright y)\triangleright z\\
    &&-\frac{d}{dt}\bigg|_{t=0}\frac{d}{ds}\bigg|_{s=0}\frac{d}{dr}\bigg|_{r=0}L^\rhd_{\exp(sy+tx+\frac{1}{2}ts[y,x]_\g+\cdots) }\exp(rz)- (y\triangleright x)\triangleright z\\
      &=&\frac{1}{2}[x,y]_\g\triangleright z+ (x\triangleright y)\triangleright z-\frac{1}{2}[ y,x]_\g\triangleright z- (y\triangleright x)\triangleright z\\
      &=&[x,y]_ \triangleright \triangleright z,
  \end{eqnarray*}
  which implies that \meqref{eq:pLequi} holds. Thus by Lemma \mref{lem:desLie}, $(\g,[\cdot,\cdot]_\g,\triangleright)$ is a post-Lie algebra.

  Since $\Psi$ is a Lie group homomorphism, it follows that $\psi$ is a Lie algebra homomorphism. Moreover, we have
  \begin{eqnarray*}
    \psi(x)\triangleright_\h\psi( y)&=&\frac{d}{dt}\bigg|_{t=0}\frac{d}{ds}\bigg|_{s=0}L^{\rhd_H}_{\exp(t\psi(x))} \exp(s\psi( y))=\frac{d}{dt}\bigg|_{t=0}\frac{d}{ds}\bigg|_{s=0}L^{\rhd_H}_{\Psi(\exp(t x ))} \Psi(\exp(s y))\\
    &=&\frac{d}{dt}\bigg|_{t=0}\frac{d}{ds}\bigg|_{s=0}\Psi(L^{\rhd_G}_{\exp(t x )}  \exp(s y))=\psi(\frac{d}{dt}\bigg|_{t=0}\frac{d}{ds}\bigg|_{s=0} L^{\rhd_G}_{\exp(t x )}  \exp(s y))\\
    &=&\psi(x\triangleright_\g y ),
  \end{eqnarray*}
  which implies that $\psi$ is a homomorphism between post-Lie algebras.

It is straightforward to check that the correspondences indeed define a functor.
\end{proof}

\begin{rmk}
Rump constructed pre-Lie algebras from $\mathbb R$-braces in \cite{Ru14}. This can be regarded as a special case of Theorem~\ref{thm:diffpL} thanks to Theorem \ref{functor} for pre-Lie groups.
\end{rmk}

\begin{rmk}
There are similar constructions of pre-Lie rings from finite braces \cite{SS,Sm22a,Sm22b,Sm22c,Sm22d}. Since braces are special skew-left braces, it is natural to ask for a ``differentiation'' theory for finite nilpotent post-groups.
\end{rmk}
In the sequel, we show that the differentiation of the sub-adjacent  Lie group $(G,\circ)$ given in Theorem \mref{pro:subad} is exactly the sub-adjacent Lie algebra $(\g,[\cdot,\cdot]_\triangleright)$ given in Lemma \mref{lem:desLie}.

\begin{lem}\mlabel{lem:diffi}Let $(G,\cdot,\rhd)$ be a post-Lie group with the smooth multiplication $\rhd$. Then we have
  $$
  \frac{d}{dt}\bigg|_{t=0}\exp(tx)^{\dagger}=-x\tforall x\in\g.
  $$
\end{lem}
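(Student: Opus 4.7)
The plan is to unwind the formula for the $\circ$-inverse from Theorem~\mref{pro:subad} and apply the chain rule at the identity.

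First, recall $a^{\dagger}=(L^\rhd_a)^{-1}(a^{-1})$, where the $-1$ denotes inversion in $(G,\cdot)$. Applied to $a=\exp(tx)$, this gives the explicit formula
\[
\exp(tx)^{\dagger}=(L^\rhd_{\exp(tx)})^{-1}(\exp(-tx)).
\]
Since $\rhd$ is smooth and each $L^\rhd_a$ is a Lie group automorphism of $(G,\cdot)$, the map $\Phi:G\times G\to G$ defined by $\Phi(a,b):=(L^\rhd_a)^{-1}(b)$ is smooth. Thus $\gamma(t):=\exp(tx)^{\dagger}=\Phi(\exp(tx),\exp(-tx))$ is a smooth curve through $e$, and the chain rule yields
\[
\gamma'(0)=d\Phi_{(e,e)}(x,0)+d\Phi_{(e,e)}(0,-x).
\]

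Next I would show that the first partial vanishes and compute the second. For the first, observe that $(L^\rhd_a)^{-1}$ is an automorphism of $(G,\cdot)$, so $\Phi(a,e)=e$ for every $a\in G$. Consequently the curve $t\mapsto \Phi(\exp(tx),e)$ is constantly $e$, so $d\Phi_{(e,e)}(x,0)=0$. For the second, by \meqref{Post-3} we have $L^\rhd_e=\Id$, hence $(L^\rhd_e)^{-1}=\Id$, and therefore
\[
d\Phi_{(e,e)}(0,-x)=\frac{d}{ds}\bigg|_{s=0}(L^\rhd_e)^{-1}(\exp(-sx))=\frac{d}{ds}\bigg|_{s=0}\exp(-sx)=-x.
\]
Combining the two contributions gives $\gamma'(0)=-x$, as required.

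There is no real obstacle here: the only subtlety is the smooth dependence of $(L^\rhd_a)^{-1}$ on $a$, which is handled by smoothness of $\rhd$ together with the fact that $L^\rhd_a\in\Aut(G)$ is a diffeomorphism. Once this is in place, the computation is purely the chain rule, exploiting the two defining properties of a post-group that $L^\rhd_a$ is a group automorphism (yielding one vanishing partial) and that $e$ is a left unit for $\rhd$ (trivializing the other).
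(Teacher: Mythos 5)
Your proposal is correct, but it takes a genuinely different route from the paper. The paper's proof is a two-line argument: it differentiates the identity $\exp(tx)^{\dagger}\circ\exp(tx)=e$ at $t=0$ and invokes the Leibniz rule for the sub-adjacent multiplication $\circ$ (i.e.\ the standard fact that the differential of a Lie group multiplication at $(e,e)$ is addition of tangent vectors), immediately yielding $\frac{d}{dt}\big|_{t=0}\exp(tx)^{\dagger}+x=0$. You instead unwind the explicit inverse formula $a^{\dagger}=(L^\rhd_a)^{-1}(a^{-1})$ from Theorem~\mref{pro:subad} and apply the chain rule to $\Phi(a,b)=(L^\rhd_a)^{-1}(b)$, killing one partial via $L^\rhd_a(e)=e$ and evaluating the other via $L^\rhd_e=\Id$. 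Both computations are valid. What your approach buys is that it makes explicit the differentiability of the curve $t\mapsto\exp(tx)^{\dagger}$ (via joint smoothness of $(a,b)\mapsto(L^\rhd_a)^{-1}(b)$, which follows from the inverse function theorem applied to $(a,b)\mapsto(a,a\rhd b)$), a point the paper's Leibniz-rule argument tacitly assumes; the cost is that you must justify this smoothness, whereas the paper's argument is shorter and works entirely at the level of the sub-adjacent Lie group $(G,\circ)$ without ever touching the inverse of $L^\rhd_a$.
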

\begin{proof}
  By the Leibniz rule, we have
  $$
 0= \frac{d}{dt}\bigg|_{t=0}(\exp(tx)^{\dagger}\circ \exp(tx))=  \frac{d}{dt}\bigg|_{t=0}\exp(tx)^{\dagger}+x,
  $$
  which implies that $
  \frac{d}{dt}\big|_{t=0}\exp(tx)^{\dagger}=-x.
  $
\end{proof}

\begin{pro}Let $(G,\cdot,\rhd)$ be a post-Lie group with the smooth multiplication $\rhd$. Then the Lie algebra of the sub-adjacent Lie group $(G,\circ)$ is the sub-adjacent Lie algebra $(\g, [\cdot,\cdot]_\triangleright)$, giving the following commutative diagram
\begin{equation}\mlabel{eq:post-sub}
\begin{split}
 \xymatrix{\text{post-Lie group}~ (G,\cdot,\rhd) \ar[rr]^{\text{\qquad\qquad sub-adjacent}}\ar[d]_{\text{differentiation}} &  &   (G,\circ) \ar[d]^{\text{differentiation}} \\
\text{post-Lie algebra}~(\g,[\cdot,\cdot]_\g,\triangleright) \ar[rr]^{ \text{\qquad\qquad sub-adjacent}} & & (\g, [\cdot,\cdot]_\triangleright).}
\end{split}
\end{equation}
   \end{pro}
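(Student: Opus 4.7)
The plan is to differentiate the group operation $\circ$ directly and identify the induced Lie bracket on $T_eG = \g$ with $[\cdot,\cdot]_\triangleright$; the commutativity of the diagram \meqref{eq:post-sub} is then immediate. First I would verify that $(G,\circ)$ genuinely is a Lie group: the multiplication $a\circ b = a\cdot(a\rhd b)$ is smooth as a composition of smooth maps, and inversion is smooth via $a^\dagger = (L_a^\rhd)^{-1}(a^{-1})$ together with smoothness of $\rhd$, so there is a well-defined Lie bracket on $T_eG = \g$ coming from $\circ$.

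To compute this bracket, I would work in the logarithm chart of $(G,\cdot)$ centered at $e$ and expand $\exp(tx)\circ\exp(sy)$ to second order in $(t,s)$. The key intermediate step is the identity
$$\exp(tx)\rhd\exp(sy) \;=\; L^\rhd_{\exp(tx)}(\exp(sy)) \;=\; \exp\!\Big(s\,(L^\rhd_{\exp(tx)})_{*e}(y)\Big),$$
which uses that each $L^\rhd_a$ is an automorphism of the Lie group $(G,\cdot)$. By the definition \meqref{eq:diff} of $\triangleright$, one has $(L^\rhd_{\exp(tx)})_{*e}(y) = y + t(x\triangleright y) + O(t^2)$, so in the chart
$$\exp(tx)\rhd\exp(sy) \;=\; sy + st(x\triangleright y) + O(3).$$
Combining this with the BCH formula $\log(a\cdot b) = a+b+\tfrac{1}{2}[a,b]_\g + O(3)$ and substituting $a = tx$, $b = \exp(tx)\rhd\exp(sy)$ produces
$$\log\!\big(\exp(tx)\circ\exp(sy)\big) \;=\; tx + sy + st(x\triangleright y) + \tfrac{ts}{2}[x,y]_\g + O(3).$$

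For any smooth group law expressed in a local chart at $e = 0$ as $F(a,b) = a + b + B(a,b) + O(3)$, the induced Lie bracket on the tangent space at $0$ is $B(x,y) - B(y,x)$. Applied to $\circ$, the bilinear coefficient reads off as $B(x,y) = x\triangleright y + \tfrac{1}{2}[x,y]_\g$, whence
$$[x,y]_\circ \;=\; \bigl(x\triangleright y - y\triangleright x\bigr) + \tfrac{1}{2}\bigl([x,y]_\g - [y,x]_\g\bigr) \;=\; [x,y]_\g + x\triangleright y - y\triangleright x \;=\; [x,y]_\triangleright,$$
matching the bracket of Lemma~\mref{lem:desLie}, which proves the proposition. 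I expect the main bookkeeping difficulty to be verifying that the cross-terms arising when substituting the $\rhd$-expansion into the BCH expansion—typically iterated brackets such as $[x, x\triangleright y]_\g$ or $[x\triangleright y, y]_\g$—all genuinely lie in $O(3)$, so that the bilinear coefficient $B(x,y)$ is isolated unambiguously. If one prefers to avoid the chart entirely, the same answer can be obtained by expanding $\exp(tx)\circ\exp(sy)\circ\exp(tx)^\dagger$ using Theorem~\mref{pro:subad}\meqref{it:subad1}, Lemma~\mref{lem:diffi} and \meqref{eq:diff}, but the combinatorics are comparable.
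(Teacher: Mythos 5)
Your argument is correct, and it reaches the bracket by a genuinely different route from the paper. The paper differentiates the \emph{conjugation} map: it first establishes the identity $a\circ b\circ a^{\dagger}=a\cdot(a\rhd b)\cdot\big(a\rhd(b\rhd a^{\dagger})\big)$, then applies $\frac{d^2}{dt\,ds}\big|_{t,s=0}$ to $\exp(tx)\circ\exp(sy)\circ\exp(tx)^{\dagger}$, splitting the triple product by the Leibniz rule and invoking Lemma~\mref{lem:diffi} and formula \meqref{eq:expo} to collect the three contributions $[x,y]_\g$, $x\triangleright y$ and $-y\triangleright x$. You instead expand the \emph{multiplication} $\exp(tx)\circ\exp(sy)$ to second order in the logarithm chart and antisymmetrize the bilinear coefficient. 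Your key identity $L^\rhd_{\exp(tx)}\exp(sy)=\exp\big(s\,(L^\rhd_{\exp(tx)})_{*e}(y)\big)$ is justified because $L^\rhd_a$ is a Lie group automorphism of $(G,\cdot)$ and hence intertwines $\exp$ with its differential, and your worry about the cross-terms is unfounded: since $F(a,0)=a$ and $F(0,b)=b$ (as $e$ is the unit for $\circ$ by Theorem~\mref{pro:subad}), the quadratic part $B$ is genuinely bilinear, and terms such as $\tfrac12[tx,\,st(x\triangleright y)]_\g$ are of total degree three in $(t,s)$, hence absorbed into $O(3)$. What each approach buys: the paper's conjugation computation stays chart-free and uses only the already-established formula \meqref{eq:expo} together with Lemma~\mref{lem:diffi}, at the cost of manipulating the inverse $a^\dagger$; your computation avoids $\dagger$ entirely but imports the standard (unproved in the paper) fact that the Lie bracket is the antisymmetrization of the second-order jet of the multiplication, plus the degree-two truncation of BCH. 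You also explicitly address the smoothness of $\circ$ and of inversion, which the paper leaves implicit; both are fine, since smoothness of the multiplication on a group manifold already forces smoothness of inversion.
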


   \begin{proof} First for all $a,b\in G$, we have
\begin{eqnarray*}
  a\circ b\circ a^{\dagger}= a\circ (b\circ a^{\dagger})=a\circ (b\cdot (b\rhd a^{\dagger}))=a\cdot( a\rhd (b\cdot (b\rhd a^{\dagger})))=a\cdot(a\rhd b)\cdot ( a \rhd (b\rhd a^{\dagger})).
\end{eqnarray*}
Denote by $\{\cdot,\cdot\}$ the Lie algebra structure of the Lie group $(G,\circ)$. Then by Theorem \mref{pro:subad} and Lemma \mref{lem:diffi}, we have
\begin{eqnarray*}
 \{x,y\} &=&\frac{d^2}{dt ds}\,\bigg|_{t,s=0} \exp(tx)\circ \exp(sy)\circ \exp(tx)^\dagger\\
 &=&\frac{d^2}{dt ds}\,\bigg|_{t,s=0}\exp(tx)\cdot(\exp(tx)\rhd \exp(sy))\cdot ( \exp(tx) \rhd (\exp(sy)\rhd \exp(tx)^{\dagger}))\\
 &=&\frac{d^2}{dt ds}\,\bigg|_{t,s=0}\exp(tx)\cdot(\exp(tx)\rhd \exp(sy))\cdot ( \exp(tx) \rhd  \exp(tx)^{\dagger})\\
 &&+\frac{d^2}{dt ds}\,\bigg|_{t,s=0}\exp(tx)\cdot ( \exp(tx) \rhd (\exp(sy)\rhd \exp(tx)^{\dagger}))\\
 &=&\frac{d^2}{dt ds}\,\bigg|_{t,s=0}\exp(tx)\cdot(\exp(tx)\rhd \exp(sy))\cdot    \exp(tx)^{-1}\\
 &&+\frac{d^2}{dt ds}\,\bigg|_{t,s=0} \exp(sy)\rhd \exp(tx)^{\dagger})\\
 &=& \frac{d^2}{dt ds}\,\bigg|_{t,s=0} \exp(tx) \cdot \exp(sy) \cdot     \exp(tx)^{-1} +\frac{d^2}{dt ds}\,\bigg|_{t,s=0} \exp(tx)\rhd \exp(sy)\\
 &&+\frac{d^2}{dt ds}\,\bigg|_{t,s=0}\exp(sy)\rhd \exp(tx)^{\dagger}\\
 &=&[x,y]_\g+x\triangleright y-y\triangleright x,
\end{eqnarray*}
which implies $\{x,y\} = [x,y]_\triangleright.$
\end{proof}

Recall that given an action $\phi:\g\to \Der(\h)$ of a Lie algebra $(\g,[\cdot,\cdot]_\g)$ on a Lie algebra  $(\h,[\cdot,\cdot]_\h)$, a relative Rota-Baxter operator of weight $1$ on $\g$ with respect to the action $\phi$ is a linear map $B:\h\to \g$ satisfying
\begin{equation} \mlabel{eq:B}
[B(u), B(v)]_\g=B\Big(\phi(B(u))v-\phi(B(v))u+[u,v]_\h\Big)\tforall u, v\in\h.
\end{equation}

Let $G$ and $H$ be Lie groups whose Lie algebras are $(\g,[\cdot,\cdot]_\g)$ and $(\h,[\cdot,\cdot]_\h)$ respectively. Then taking the differentiation we can obtain the following result.

\begin{thm}{\rm(\mcite{GLS})}\mlabel{thm:diffRB}
 Let $\huaB:H\longrightarrow G$ be a   relative Rota-Baxter operator  on a Lie group $(G,\cdot_G)$ with respect to the action $\Phi$. Then $B:=\huaB_{*e_H}:\h\to \g$ is a relative Rota-Baxter operator of weight $1$ on the Lie algebra $\g$ with respect to the action $\phi:=\Phi_*.$
\end{thm}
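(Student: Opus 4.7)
The plan is to verify the Lie-algebra Rota-Baxter identity \meqref{eq:B} by extracting the coefficient of $ts$ in a mixed Taylor expansion of the group identity \meqref{RRBO}, specialised to $h=\exp(tu)$ and $k=\exp(sv)$ for $u,v\in\h$ and small $t,s\in\mathbb R$. The engine is the Baker-Campbell-Hausdorff (BCH) formula applied in both $G$ and $H$; a final antisymmetrisation in $(u,v)$ will eliminate the \emph{a priori} unknown symmetric second-order contribution coming from $\huaB$ itself.

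For the left-hand side, I would write $\huaB(\exp(tu))=\exp(tB(u)+O(t^2))$ in $G$ and similarly in $v$. BCH in $\g$ then gives
$$\log_G\bigl(\huaB(\exp(tu))\cdot_G\huaB(\exp(sv))\bigr)=tB(u)+sB(v)+\tfrac{ts}{2}[B(u),B(v)]_\g+R_1(t,s),$$
where $R_1$ collects pure $t^n$, pure $s^n$, and mixed terms of total degree $\ge 3$. Hence $\partial_t\partial_s|_{0}$ of this expression equals $\tfrac12[B(u),B(v)]_\g$.

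For the right-hand side, I would first analyse the argument of $\huaB$. Since $\Phi:G\to\Aut(H)$ is a Lie group homomorphism with differential $\phi:\g\to\Der(\h)$, and Lie group automorphisms of $H$ intertwine the exponential, the automorphism $\alpha_t:=\Phi(\huaB(\exp(tu)))$ acts as $\alpha_t(\exp(sv))=\exp(sv+st\,\phi(B(u))(v)+O(st^2))$. BCH in $\h$ then yields
$$\log_H\bigl(\exp(tu)\cdot_H\alpha_t(\exp(sv))\bigr)=tu+sv+ts\bigl(\phi(B(u))v+\tfrac12[u,v]_\h\bigr)+R_2(t,s),$$
with $R_2$ again absorbing higher-order or pure-in-one-variable contributions. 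Applying $\huaB$ and writing the Taylor expansion $\log_G\huaB(\exp X)=B(X)+\tfrac12 Q(X,X)+O(X^3)$ for a \emph{symmetric} bilinear form $Q:\h\times\h\to\g$ (the Hessian of $\log_G\circ\huaB\circ\exp$ at the origin), the $ts$-coefficient of $\log_G$ of the right-hand side comes out to $B\bigl(\phi(B(u))v+\tfrac12[u,v]_\h\bigr)+Q(u,v)$.

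Equating the two $ts$-coefficients yields
$$\tfrac12[B(u),B(v)]_\g=B\bigl(\phi(B(u))v+\tfrac12[u,v]_\h\bigr)+Q(u,v),$$
and antisymmetrising in $(u,v)$ kills $Q(u,v)-Q(v,u)=0$, producing precisely
$$[B(u),B(v)]_\g=B\bigl(\phi(B(u))v-\phi(B(v))u+[u,v]_\h\bigr),$$
which is \meqref{eq:B}. The main obstacle is the unknown second-order term in the Taylor expansion of $\huaB\circ\exp$: without the observation that this term is intrinsically symmetric in $(u,v)$, it would block the extraction of the bracket relation. Antisymmetrisation is the decisive trick that dispenses with it cleanly, so no additional information about $\huaB$ beyond its differential $B$ is needed.
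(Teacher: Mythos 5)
Your argument is correct. Note first that the paper itself gives no proof of Theorem~\mref{thm:diffRB}: it states ``taking the differentiation we can obtain the following result'' and cites \mcite{GLS}, so your proposal is a legitimate filling-in of exactly that differentiation. Your computation is sound at every step: $\huaB(e_H)=e_G$ (needed for $\log_G\circ\huaB\circ\exp_H$ to be defined and vanish at the origin) follows from \meqref{RRBO} with $h=k=e_H$, the use of $\alpha_t\circ\exp_H=\exp_H\circ(\alpha_t)_{*e}$ is exactly what makes the $ts$-coefficient of the inner argument computable, and the decisive observation that the Hessian $Q$ of $\log_G\circ\huaB\circ\exp_H$ is symmetric and hence dies under antisymmetrisation is precisely what closes the argument without further knowledge of $\huaB$. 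There is, however, a more structural route that the paper's own machinery suggests and that the cited reference essentially follows: identity \meqref{RRBO} says that $\huaB$ is a Lie group homomorphism from the descendant group $(H,\circ_H)$ of \meqref{eq:descen} to $(G,\cdot_G)$; the Lie algebra of $(H,\circ_H)$ is $\h$ with the bracket $[u,v]_\h+\phi(B(u))v-\phi(B(v))u$ (computed by the same conjugation formula the paper uses to show $\{x,y\}=[x,y]_\triangleright$ in diagram \meqref{eq:post-sub}), and then \meqref{eq:B} is nothing but the statement that the differential of a Lie group homomorphism is a Lie algebra homomorphism. That route buys conceptual clarity and reuses Theorem~\mref{thm:RBPost}; your direct Taylor-expansion argument buys self-containedness and makes explicit why the unknown second-order data of $\huaB$ is irrelevant. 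Both are valid; yours is complete as written.
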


Proposition \mref{post-group-to-RB}, and Theorems \mref{thm:diffpL} and \mref{thm:diffRB} yield the commutative diagram
\begin{equation}\mlabel{eq:diff-id}
\begin{split}
\small{
 \xymatrix{\text{post-Lie group}~ (G,\cdot,\rhd) \ar[rr]^{\text{}}\ar[d]_{\text{differentiation}} &  & \text{relative RB operator} ~\Id:G \to G_\rhd \ar[d]^{\text{differentiation}} \\
\text{post-Lie algebra}~(\g,[\cdot,\cdot]_\g,\triangleright) \ar[rr]^{ \text{}} & &\text{relative RB operator}~ \Id:\g\to\g_\triangleright.}
}
\end{split}
\end{equation}

Let $B:\h\to \g$ be a relative Rota-Baxter operator on a Lie algebra $\g$ with respect to an action $\phi:\g\to \Der(\h)$. Then $(\h,[\cdot,\cdot]_\h,\triangleright)$ is a post-Lie algebra, where $\triangleright:\h\otimes \h\to\h$
is defined by
$$
u \triangleright v=\phi(u)(v)\tforall u,v\in\h.
$$

Similarly, Theorems \mref{thm:RBPost}, \mref{thm:diffpL} and~\mref{thm:diffRB} yield the
commutative diagram
\begin{equation}\mlabel{eq:diff-act}
\begin{split}
\small{
 \xymatrix{\text{relative RB operator}~ H\stackrel{\huaB}{\to} G \ar[rr]^{\text{splitting}}\ar[d]_{\text{differentiation}} &  & \text{post-Lie group}~ (H,\cdot_H,\rhd) \ar[d]^{\text{differentiation}}  \\
\text{relative RB operator}~\h \stackrel{B}{\to} \g \ar[rr]^{ \text{splitting}} & &\text{post-Lie algebra}~(\h,[\cdot,\cdot]_\h,\triangleright).}
}
\end{split}
\end{equation}
In summary, we have the commutative square in the diagram in~\meqref{eq:bigdiag}.
\vspace{-.1cm}
\section{Formal integrations of post-Lie algebras and Lie-Butcher groups}\mlabel{post-group-inte}

In this section, we give the formal integration of connected complete post-Lie algebras using connected complete post-Hopf algebras and post-Lie Magnus expansions. As a byproduct, we show that there is a post-group underlying the Lie-Butcher group.
\vspace{-.1cm}

\subsection{Post-Hopf algebras, post-Lie algebras and post-groups}

In this subsection, first  we recall
the notion of   post-Hopf algebras   introduced in \mcite{LST} as the underlying algebraic structures of   relative Rota-Baxter operators on Hopf algebras, and the relation to post-Lie algebras. Then we show that a  post-Hopf algebra   naturally gives rise to a post-group.

\begin{defi}\mlabel{defi:pH}{\rm \mcite{LST}}
A {\bf post-Hopf algebra} is a pair $(H,\rhd)$, where $H:=(H,\cdot,1,\Delta,\vep,S)$ is a cocommutative Hopf algebra and $\rhd:H\otimes H\to H$ is a coalgebra homomorphism such that,
\begin{enumerate}
\item the following equalities hold:
\begin{eqnarray}
\mlabel{Post-hopf-2}x\rhd (y\cdot z)&=&\sum_{x}(x_{(1)}\rhd y)\cdot(x_{(2)}\rhd z),\\
\mlabel{Post-hopf-4}x\rhd (y\rhd z)&=&\sum_{x}\big(x_{(1)}\cdot(x_{(2)}\rhd y)\big)\rhd z\tforall x, y, z\in H,\quad\Delta(x)=\sum_x x_{(1)}\otimes x_{(2)};
\vspace{-.1cm}
\end{eqnarray}
\item the left multiplication $L^\rhd:H\to \End(H)$ defined by
$L^\rhd_x y= x\rhd y$ is convolution invertible in
$\Hom(H,\End(H))$, that is, there exists a unique
$\beta:H\to\End(H)$ such that
\begin{equation}\mlabel{Post-con}
  \sum_{x}L^\rhd_{x_{(1)}}\circ\beta_{x_{(2)}}=  \sum_{x}\beta_{x_{(1)}}\circ L^\rhd_{x_{(2)}}=\varepsilon(x)\Id_H\tforall x\in H,\quad\Delta(x)=\sum_x x_{(1)}\otimes x_{(2)}.
\end{equation}
\end{enumerate}
\end{defi}

\begin{rmk}
A related structure is a $D$-algebra, which plays an important role in the study of numerical Lie group integrators \mcite{MW,ML}. Recently, the notion of a $D$-bialgebra  was
introduced  in \mcite{MQS} as the universal enveloping algebra of a post-Lie algebra.
\end{rmk}
\vspace{-.2cm}

\begin{thm} \mcite{LST} \mlabel{thm:subHopf}
Let $(H,\rhd)$ be a  post-Hopf algebra. Define
\begin{equation}
    \mlabel{post-rbb-1}
x *_\rhd y=\sum_{x}x_{(1)}\cdot (x_{(2)}\rhd y),~
S_\rhd(x)=\sum_{x}\beta_{\rhd,x_{(1)}}(S(x_{(2)})) \tforall x,y\in H,~\Delta(x)=\sum_x x_{(1)}\otimes x_{(2)}.
\end{equation}
Then
$H_\rhd\coloneqq(H,*_\rhd,1,\Delta,\vep,S_\rhd)$
is a Hopf algebra, called the {\bf sub-adjacent Hopf algebra} of $(H,\rhd)$.
\end{thm}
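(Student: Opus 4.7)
The goal is to equip $H$ with the new multiplication $*_\rhd$ and antipode $S_\rhd$ and check all Hopf algebra axioms. The natural order is: (i) basic identities $1\rhd y=y$ and $x\rhd 1=\vep(x)1$; (ii) associativity and unitality of $*_\rhd$; (iii) compatibility of $\Delta$ and $\vep$ with $*_\rhd$; (iv) the antipode condition for $S_\rhd$.

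First I would establish the ``unit lemma'' $1\rhd y=y$ and $x\rhd 1=\vep(x)1$, the Hopf analogs of \meqref{Post-1} and \meqref{Post-3}. Applying \meqref{Post-hopf-2} with $y=z=1$ gives $x\rhd 1=\sum_x(x_{(1)}\rhd 1)\cdot(x_{(2)}\rhd 1)$, and since $\rhd$ is a coalgebra map, $\Delta(x\rhd 1)=\sum (x_{(1)}\rhd 1)\otimes(x_{(2)}\rhd 1)$; together with the convolution invertibility of $L^\rhd$ (specialized at $x$) one concludes $x\rhd 1=\vep(x)1$. For $1\rhd y$, specializing \meqref{Post-hopf-4} at $x=1$ yields $1\rhd(1\rhd z)=1\rhd z$, and convolution invertibility forces $L^\rhd_1=\Id_H$.

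Next I would prove associativity of $*_\rhd$. Expanding $(x*_\rhd y)*_\rhd z$ via the definition and Sweedler notation, using that $\Delta$ is a homomorphism of $(H,\cdot)$ and that $\rhd$ is a coalgebra homomorphism, one rewrites the inner coproduct and collapses the result by a single application of \meqref{Post-hopf-4}; the outcome matches $x*_\rhd(y*_\rhd z)$ obtained by applying \meqref{Post-hopf-2} once. This is the Hopf version of the calculation in Theorem \mref{pro:subad}\meqref{it:subad1}, with cocommutativity of $\Delta$ used to permute the tensor factors as needed. Unitality $1*_\rhd y=1\cdot(1\rhd y)=y$ and $x*_\rhd 1=\sum x_{(1)}\cdot(x_{(2)}\rhd 1)=\sum x_{(1)}\vep(x_{(2)})=x$ then follow from the unit lemma.

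For the bialgebra structure, $\vep(x*_\rhd y)=\vep(x)\vep(y)$ is immediate since $\vep$ is multiplicative for $\cdot$ and $\vep(x\rhd y)=\vep(x)\vep(y)$ because $\rhd$ is a coalgebra map. To check $\Delta(x*_\rhd y)=\Delta(x)*_\rhd\Delta(y)$, apply $\Delta$ to $\sum x_{(1)}\cdot(x_{(2)}\rhd y)$, use that both $\cdot$ and $\rhd$ are coalgebra homomorphisms, then reorganize the Sweedler indices using cocommutativity — this is the direct analog of property \meqref{Post-2} at the coalgebra level.

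The main obstacle is verifying that $S_\rhd$ defined by $S_\rhd(x)=\sum \beta_{x_{(1)}}(S(x_{(2)}))$ is the two-sided antipode for $*_\rhd$. The plan is to compute, say,
\[
\sum_x x_{(1)}*_\rhd S_\rhd(x_{(2)}) \;=\; \sum_x x_{(1)}\cdot\bigl(x_{(2)}\rhd \beta_{x_{(3)}}(S(x_{(4)}))\bigr),
\]
then pull out $x_{(1)}$ and recognize the remaining expression, via the defining relation \meqref{Post-con} of $\beta$ (namely $\sum L^\rhd_{x_{(1)}}\circ\beta_{x_{(2)}}=\vep(x)\Id$), as $\sum_x x_{(1)}\cdot\bigl(\vep(x_{(2)})S(x_{(3)})\bigr)=\sum x_{(1)}\cdot S(x_{(2)})=\vep(x)1$, using the antipode axiom for the original Hopf algebra $H$. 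The symmetric identity $\sum S_\rhd(x_{(1)})*_\rhd x_{(2)}=\vep(x)1$ requires the other half of \meqref{Post-con}; here cocommutativity of $\Delta$ is essential to move $\beta_{x_{(1)}}$ into position, and one must be careful that the coalgebra-homomorphism property of $\rhd$ lets $\beta$ interact correctly with $\Delta$. Once both sides are checked, all Hopf algebra axioms are in hand, and $H_\rhd$ is a (cocommutative) Hopf algebra as claimed.
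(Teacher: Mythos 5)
First, a point of comparison: the paper does not prove Theorem \mref{thm:subHopf} at all --- it is quoted from \mcite{LST} without proof --- so there is no in-paper argument to measure yours against, and your proposal has to stand on its own. Most of it does. The unit identities, the associativity of $*_\rhd$ via one application each of \meqref{Post-hopf-2} and \meqref{Post-hopf-4}, the compatibility of $\Delta$ and $\vep$ with $*_\rhd$ (with cocommutativity used to reshuffle Sweedler legs), and the right-antipode computation $\sum_x x_{(1)}*_\rhd S_\rhd(x_{(2)})=\vep(x)1$ are all correct as sketched. One small repair: your justification of $x\rhd 1=\vep(x)1$ is off. The convolution invertibility of $L^\rhd$ in $\Hom(H,\End(H))$ does not directly cancel the convolution idempotent $f(x)=x\rhd 1=\sum_x f(x_{(1)})\cdot f(x_{(2)})$; what does is that $f$ is a coalgebra morphism into the Hopf algebra $(H,\cdot)$, hence convolution invertible in $\Hom(H,H)$ with inverse $S\circ f$, whence $f=\vep(\cdot)1$.

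The genuine gap is the left-antipode identity, which you present as the mirror image of the right-antipode computation "using the other half of \meqref{Post-con}." The two sides are not symmetric. In $\sum_x x_{(1)}*_\rhd S_\rhd(x_{(2)})$ the element $S_\rhd(x_{(2)})$ sits in the second slot of $*_\rhd$ and is never comultiplied, so \meqref{Post-con} closes the computation. In $\sum_x S_\rhd(x_{(1)})*_\rhd x_{(2)}$ the very first expansion requires $\Delta\bigl(S_\rhd(x_{(1)})\bigr)$, i.e.\ you must first prove that $\beta$ intertwines with $\Delta$ the way $L^\rhd$ does, namely $\Delta(\beta_x(y))=\sum_x\beta_{x_{(1)}}(y_{(1)})\otimes\beta_{x_{(2)}}(y_{(2)})$, and to finish you also need how $\beta$ interacts with $\cdot$ (equivalently, the identity $\beta_x=L^\rhd_{S_\rhd(x)}$, which follows from reading \meqref{Post-hopf-4} as $L^\rhd_{x*_\rhd y}=L^\rhd_x\circ L^\rhd_y$ together with uniqueness of convolution inverses). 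These auxiliary properties of $\beta$ are true and provable --- each by exhibiting two convolution inverses of the same element in a suitable convolution algebra --- but they constitute a real intermediate lemma, not bookkeeping, and your sketch does not supply them. Alternatively you can bypass $\beta$ entirely at this stage: once $(H,*_\rhd,1,\Delta,\vep)$ is a bialgebra and $S_\rhd$ is a right convolution inverse of $\Id_H$ in $\Hom(H,H_\rhd)$, it suffices to produce any left convolution inverse, since a left and a right inverse in an associative convolution monoid coincide. Either route should be made explicit; as written, the symmetric computation you propose stalls at its first step.
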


Let $(\g,[\cdot,\cdot]_\g,\rhd)$ be a post-Lie algebra. Foissy \mcite{Fo} extended the post-Lie product $\rhd:\g \otimes \g\to \g$,
to the coshuffle Hopf algebra $(T\g,\cdot,\Delta^{\co})$ as follows: for $x,x_1,\dots,x_n,a,a_1,\dots,a_m\in \g$ and $X\in T\g,~{\Delta^{\co}}^{(m-1)}X=\sum_{X}X_{(1)}\otimes\cdots\otimes X_{(m)}$, define
\begin{eqnarray}\mlabel{post-brace-algebra-1}
\mlabel{u-1}1\rhd a&=&a,\\
\mlabel{u-2}x\rhd a&=&x\rhd a, \\
\mlabel{u-3}(x\otimes x_1) \rhd a&=&x\rhd (x_1\rhd a)-(x\rhd x_1)\rhd a,\\
\vspace{-.1cm}
\nonumber&\vdots&\\
\vspace{-.1cm}
\mlabel{u-4}(x\otimes x_1\otimes\cdots \otimes x_n) \rhd a&=&x\rhd \big((x_1\otimes\cdots \otimes x_n)\rhd a\big)\\
\nonumber&&-\sum_{i=1}^{n}\big(x_1\otimes\cdots \otimes x_{i-1}\otimes(x\rhd x_i) \otimes x_{i+1}\otimes\cdots\otimes x_n\big)\rhd a,
\vspace{-.5cm}
\end{eqnarray}
\vspace{-.1cm}
and
\vspace{-.2cm}
\begin{eqnarray}\mlabel{post-brace-algebra-2}
\mlabel{u-5}1\rhd 1&=&1,\\
\mlabel{u-6}x\rhd 1&=&0,\\
\mlabel{u-7}X \rhd (a_1\otimes\cdots\otimes a_m)&=&\sum_{X}(X_{(1)}\rhd a_1)\otimes\cdots\otimes(X_{(m)}\rhd a_m).
\vspace{-.1cm}
\end{eqnarray}
Then $(T\g,\cdot,\Delta^{\co},\rhd)$ is a post-Hopf algebra \mcite{LST}.   Moreover, Foissy \mcite{Fo} and Mencattini-Quesney-Silva \mcite{MQS}   proved that, for the ideal $J$ of $T\g$ generated by   $\big\{x\otimes y-y\otimes x-[x,y]_\g\,| \tforall x,y\in\g\big\},$ one has
$
J\rhd T\g=0$ and $T\g\rhd J\subset J.$
Therefore, the universal enveloping algebra $(U(\g),\cdot,\Delta^{\co},\rhd)$ is  also a post-Hopf algebra.
See \mcite{ELM,LST,MQS,OG} for more studies on the universal enveloping algebras of  post-Lie algebras.
\vspace{-.1cm}

\begin{thm}\mlabel{group-like}
Let $(G,\cdot,\rhd)$ be a post-group. Then the group algebra $\bk[G]$ is a post-Hopf algebra. Conversely,
let $(H,\rhd)$ be a  post-Hopf algebra.   Then the set $G(H)$ of group-like elements is a post-group.
\end{thm}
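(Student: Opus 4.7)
The plan is to prove the two directions separately, in each case transporting the post-group axioms to the post-Hopf axioms (and vice versa) via the bijection between group elements and group-like elements. Throughout I will use the fact that for $a\in G$, one has $\Delta(a)=a\otimes a$ in $\bk[G]$, and dually that for $a\in G(H)$, $a$ satisfies $\Delta(a)=a\otimes a$ and $\vep(a)=1$.

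For the forward direction, given a post-group $(G,\cdot,\rhd)$, I would extend $\rhd:G\times G\to G$ $\bk$-bilinearly to $\rhd:\bk[G]\otimes \bk[G]\to \bk[G]$. First I would check that $\rhd$ is a coalgebra homomorphism: it suffices to test on basis elements $a,b\in G$, where $\Delta(a\rhd b)=(a\rhd b)\otimes (a\rhd b)$ agrees with the coproduct on $\bk[G]\otimes \bk[G]$ applied to $a\otimes b$ followed by $\rhd\otimes\rhd$ (using cocommutativity), and similarly $\vep(a\rhd b)=1=\vep(a)\vep(b)$. The identities \meqref{Post-hopf-2} and \meqref{Post-hopf-4} on basis elements reduce, via $\Delta(a)=a\otimes a$, exactly to \meqref{Post-2} and \meqref{Post-4}. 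For convolution invertibility of $L^\rhd$, I would define $\beta:\bk[G]\to \End(\bk[G])$ by linearly extending $a\mapsto (L^\rhd_a)^{-1}$, which makes sense since Theorem \mref{pro:subad} (or rather the definition of a post-group) guarantees $L^\rhd_a$ is an automorphism of $(G,\cdot)$, hence an invertible $\bk$-linear endomorphism of $\bk[G]$. Then \meqref{Post-con} on a basis element $a$ reduces to $L^\rhd_a\circ (L^\rhd_a)^{-1}=\Id=(L^\rhd_a)^{-1}\circ L^\rhd_a$, which is trivial.

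For the converse direction, given a post-Hopf algebra $(H,\rhd)$, the set $G(H)$ is a group under $\cdot$ with inverse given by the antipode (since $H$ is a cocommutative Hopf algebra). I would first show that $\rhd$ restricts to $G(H)\times G(H)\to G(H)$: for $a,b\in G(H)$, since $\rhd$ is a coalgebra homomorphism we compute $\Delta(a\rhd b)=(a\rhd b)\otimes (a\rhd b)$ and $\vep(a\rhd b)=\vep(a)\vep(b)=1$, so $a\rhd b\in G(H)$. Then \meqref{Post-hopf-2} and \meqref{Post-hopf-4} specialize on group-like elements (using $\Delta(a)=a\otimes a$) to precisely \meqref{Post-2} and \meqref{Post-4}. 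The subtlest point, which I expect to be the main obstacle, is to verify that $L^\rhd_a:G(H)\to G(H)$ is bijective for each $a\in G(H)$. For this I would use the convolution inverse $\beta$: condition \meqref{Post-con} specialized at $a$ gives $L^\rhd_a\circ \beta_a=\beta_a\circ L^\rhd_a=\Id_H$, so $\beta_a$ is the two-sided linear inverse of the coalgebra map $L^\rhd_a$. A short standard argument shows that the linear inverse of an invertible coalgebra homomorphism is again a coalgebra homomorphism, hence $\beta_a$ preserves group-like elements, and therefore $L^\rhd_a$ restricts to a bijection on $G(H)$. Combined with the multiplicativity already established, $L^\rhd_a$ is an automorphism of $(G(H),\cdot)$, and $(G(H),\cdot,\rhd)$ is a post-group.
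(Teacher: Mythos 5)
Your proposal is correct and follows essentially the same route as the paper: the forward direction is the straightforward linear extension, and the converse uses closure of $G(H)$ under $\rhd$ via the coalgebra-homomorphism property, specialization of \eqref{Post-hopf-2}--\eqref{Post-hopf-4} to group-likes, and the convolution inverse $\beta$ to get bijectivity of $L^\rhd_a$. Your observation that $\beta_a$, being the linear inverse of the coalgebra map $L^\rhd_a$, is itself a coalgebra homomorphism and hence preserves group-like elements is a worthwhile detail that the paper's proof leaves implicit when it concludes surjectivity of $L^\rhd_a$ on $G(H)$ from $L^\rhd_x\circ\beta_x=\Id_H$.
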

\vspace{-.5cm}

\begin{proof}
The first result is straightforward. For the second result, since $\rhd:H\otimes H\to H$ is a coalgebra homomorphism, for $x,y\in G(H)$, we have
$
\Delta(x\rhd y)=(x\rhd y)\otimes (x\rhd y).
$
Thus, $G(H)$ is closed under $\rhd$. Further, \meqref{Post-hopf-2} and \meqref{Post-hopf-4} imply \meqref{Post-2} and \meqref{Post-4} respectively. Finally \meqref{Post-con} gives
\begin{equation*}
L^\rhd_{x}\circ\beta_{x}=\beta_{x}\circ L^\rhd_{x}=\varepsilon(x)\Id_H=\Id_H\tforall x\in G(H).
\end{equation*}
Therefore, $(G(H),\cdot,\rhd)$ is a post-group.
\end{proof}
\vspace{-.4cm}

\subsection{Formal integration of connected complete post-Lie algebras}

In this subsection, we give the formal integration of
connected complete  post-Lie algebras in terms of post-groups.

\begin{defi}\mcite{Fr}
A {\bf   filtered vector space} is a pair $(V,\huaF_{\bullet}V)$, where $V$ is a vector space and $\huaF_{\bullet}V$ is a descending
filtration of the  vector space $V$ such that $V=\huaF_0V\supset\huaF_1V\supset\cdots\supset\huaF_n V\supset\cdots$.

Let $(V,\huaF_{\bullet}V)$ and $(W,\huaF_{\bullet}W)$ be filtered  vector spaces. A {\bf homomorphism} $f:(V,\huaF_{\bullet}V)\to (W,\huaF_{\bullet}W)$ is a linear map $f:V\to W$ such that $f(\huaF_n V)\subset \huaF_n W,~n\geq 0.$
\end{defi}
\vspace{-.1cm}

Denote by  $\fVec$ the category of filtered vector spaces.
For filtered vector spaces $(V,\huaF_{\bullet}V)$ and $(W,\huaF_{\bullet}W)$, there is a filtration on $V\otimes W$   given by
\begin{eqnarray}\mlabel{FTP}
\huaF_n(V\otimes W)\coloneqq \sum_{i+j=n}\huaF_iV\otimes\huaF_jW\subset V\otimes W.
\vspace{-.2cm}
\end{eqnarray}
\vspace{-.1cm}
Thus, $(V\otimes W,\huaF_{\bullet}(V\otimes W))$ is a filtered vector space, called the  {\bf tensor product} of filtered vector spaces  $(V,\huaF_{\bullet}V)$ and $(W,\huaF_{\bullet}W)$. For the ground field $\bk$, we define $\huaF_0\bk=\bk,\huaF_n\bk=\{0\},~n\geq 2$. It follows that $(\fVec,\otimes,\bk)$ is a $\bk$-linear  symmetric monoidal category.

\begin{defi}
A {\bf   filtered post-Lie algebra} $(\g,\huaF_{\bullet}\g)$  is a post-Lie algebraic object in the category $(\fVec,\otimes,\bk)$. A filtered post-Lie algebra $(\g,\huaF_{\bullet}\g)$ is called {\bf connected} when we have $\huaF_1\g=\g.$
\end{defi}

\begin{ex}\label{free-post-Lie}
Let $\huaO$ be the set of isomorphism classes of planar rooted trees:
\[
    \huaO= \Big\{\begin{array}{c}
        \scalebox{0.6}{\ab}, \scalebox{0.6}{\aabb},
        \scalebox{0.6}{\aababb}, \scalebox{0.6}{\aaabbb},\scalebox{0.6}{\aabababb},
        \scalebox{0.6}{\aaabbabb},
        \scalebox{0.6}{\aabaabbb}, \scalebox{0.6}{\aaababbb}, \scalebox{0.6}{\aaaabbbb},\scalebox{0.6}{\aababababb},\scalebox{0.6}{\aaabbababb},\scalebox{0.6}{\aabaabbabb},\scalebox{0.6}{\aababaabbb},\scalebox{0.6}{\aaababbabb},\scalebox{0.6}{\aabaababbb},\scalebox{0.6}{\aaabbaabbb},\scalebox{0.6}{\aaabababbb},\scalebox{0.6}{\aaaabbbabb},\scalebox{0.6}{\aabaaabbbb},\scalebox{0.6}{\aaaababbbb},\scalebox{0.6}{\aaaabbabbb},\scalebox{0.6}{\aaabaabbbb},\scalebox{0.6}{\aaaaabbbbb},\ldots
            \end{array}
            \Big\}.
\]
Let $\bk\{\huaO\}$ be the free $\bk$-vector space generated by  $\huaO$. The {\bf left grafting operator} $\rhd:\bk\{\huaO\}\otimes \bk\{\huaO\}\to \bk\{\huaO\}$ is defined by

\begin{eqnarray}\mlabel{free-post-product}
\tau\rhd \omega=\sum_{s\in {\rm Nodes}(\omega)}\tau\circ_{s}\omega\tforall \tau,\omega\in \huaO,
\vspace{-.1cm}
\end{eqnarray}
where $\tau\circ_{s}\omega$ is the planar  rooted tree resulting from attaching the root of $\tau$ to  the node $s$ of the tree $\omega$ from the left. Consider the free Lie algebra $Lie(\bk\{\huaO\})$ generated by the vector space $\bk\{\huaO\}$, and extend the left grafting operator $\rhd$ on $\bk\{\huaO\}$ to the free Lie algebra $Lie(\bk\{\huaO\})$ by \meqref{Posta-1} and \meqref{Posta-2}. Therefore, $(Lie(\bk\{\huaO\}),[\cdot,\cdot],\rhd)$ is a post-Lie algebra, which is the free post-Lie algebra \mcite{ML,Val} generated by  one generator $\{\scalebox{0.6}{\ab}\}$. Moreover, $(Lie(\bk\{\huaO\}),[\cdot,\cdot],\rhd)$ is a weight graded post-Lie algebra, here the weight grading is induced by the number of nodes in the trees. We denote the homogeneous component of weight grading degree $n$ by $\big(Lie(\bk\{\huaO\})\big)_n$. Then we have $Lie(\bk\{\huaO\})=\oplus_{n=1}^{+\infty}\big(Lie(\bk\{\huaO\})\big)_n$. Define a filtration $\huaF_{\bullet}$ on $Lie(\bk\{\huaO\})$ by
\[
\huaF_n\big(Lie(\bk\{\huaO\})\big):=\left\{
\begin{array}{ll}
\oplus_{n=1}^{+\infty}\big(Lie(\bk\{\huaO\})\big)_n, &n=0, \\
\oplus_{k=n}^{+\infty}\big(Lie(\bk\{\huaO\})\big)_k, &n\geq 1.
\end{array}
\right.
\]
Then $\Big((Lie(\bk\{\huaO\}),[\cdot,\cdot],\rhd),\huaF_{\bullet}\Big)$ is a connected filtered post-Lie algebra.
\end{ex}

\begin{ex}\label{finite-post-Lie}
Let $p$ be an odd prime number. Consider the free post-Lie algebra $Lie(\mathbb F_p\{\huaO\})$ on one generator $\{\scalebox{0.6}{\ab}\}$ over the finite  field $\mathbb F_p$. Then similar to the discussion in the above example,
 $\Big((Lie(\mathbb F_p\{\huaO\}),[\cdot,\cdot],\rhd),\huaF_{\bullet}\Big)$ is a connected filtered post-Lie algebra.  Moreover, for any $n\geq 1$, $\huaF_n\big(Lie(\mathbb F_p\{\huaO\})\big)$ is an ideal of the post-Lie algebra $Lie(\mathbb F_p\{\huaO\})$. Thus, we obtain
$$
Lie(\mathbb F_p\{\huaO\})/\huaF_n\big(Lie(\mathbb F_p\{\huaO\})\big)\cong\oplus_{k=1}^{n-1}\big(Lie(\mathbb F_p\{\huaO\})\big)_k
$$
is a finite nilpotent post-Lie algebra over the finite  field $\mathbb F_p$.
\end{ex}

\begin{defi}
A {\bf   filtered post-Hopf algebra} $(H,\rhd,\huaF_{\bullet} H)$  is a post-Hopf  algebraic object in the category $(\fVec,\otimes,\bk)$. A filtered Hopf algebra $(H,\rhd,\huaF_{\bullet} H)$ is called {\bf connected} if $H/\huaF_1 H=\bk.$
\end{defi}
\vspace{-.1cm}

\begin{ex}
Let $(\g,\huaF_{\bullet}\g)$ be a filtered post-Lie algebra. By \meqref{u-1}-\meqref{u-7} and \meqref{FTP},  we deduce that $\Big(\big(T\g,\cdot,\Delta^{\co},\rhd\big),\huaF_{\bullet}(T\g)\Big)$ is a filtered post-Hopf algebra. Consequently,  $\Big(\big(U(\g),\cdot,\Delta^{\co},\rhd\big),\huaF_{\bullet}U(\g)\Big)$ is also a filtered post-Hopf algebra, where the filtration is given by
\begin{eqnarray}
\huaF_{n}U(\g)=(\huaF_{n}(T\g)+J)/J \tforall n\geq 0.
\end{eqnarray}
\end{ex}

\begin{defi} \mcite{Fr}
A {\bf   complete vector space} is a filtered vector space $(V,\huaF_{\bullet}V)$ such that the natural homomorphism
\vspace{-.2cm}
\begin{eqnarray}\mlabel{complete-mor}
\Phi_V:V\to \underleftarrow{\lim}V/\huaF_n V
\end{eqnarray}
is a linear isomorphism of vector spaces. Denote by  $\cVec$ the category of complete vector spaces.
\end{defi}

Let $(V,\huaF_{\bullet}V)$ be a filtered vector space. Then the vector space $\widehat{V}=\underleftarrow{\lim}V/\huaF_n (V)$ is a complete vector space with the filtration given by
\begin{eqnarray}
\huaF_n(\widehat{V})=\ker \pi_n,\,\,\pi_n:\underleftarrow{\lim}V/\huaF_n (V)\lon V/\huaF_n (V).
\end{eqnarray}
We call the complete vector space $\big(\widehat{V},\huaF_\bullet\widehat{V}\big)$ the {\bf completion} of the filtered vector space $(V,\huaF_{\bullet}V)$.
Let $(V,\huaF_{\bullet}V)$ and $(W,\huaF_{\bullet}W)$ be complete vector spaces. The completion of the filtered vector space $(V\otimes W,\huaF_{\bullet}(V\otimes W))$ is called the {\bf complete tensor product} of $(V,\huaF_{\bullet}V)$ and $(W,\huaF_{\bullet}W)$.
We denote the complete tensor product of $(V,\huaF_{\bullet}V)$ and $(W,\huaF_{\bullet}W)$ by $(V\hat{\otimes}W,\huaF_{\bullet}(V\hat{\otimes}W))$. Moreover,  $(\cVec,\hat{\otimes},\bk)$ is a $\bk$-linear symmetric monoidal category, and the completion functor $\widehat{(-)}$ is a  symmetric monoidal functor from the  category $\fVec$ to the  category $\cVec$. See \cite[Section 7.3]{Fr}  for more details about the complete tensor product of complete vector spaces.

\begin{defi}
A {\bf   complete  post-Lie algebra} $(\g,\huaF_{\bullet}\g)$  is a post-Lie algebraic object in the category $(\cVec,\hat{\otimes},\bk)$. A complete post-Lie algebra $(\g,\huaF_{\bullet}\g)$ is called {\bf connected} if $\huaF_1\g=\g.$
\end{defi}

We easily have an explicit description of a connected complete post-Lie algebra.

\begin{pro}\label{pronilpoten-post-lie}
A connected complete post-Lie algebra is equivalent to a filtered vector space $(\g,\huaF_{\bullet}\g)$, where $\g$ is a post-Lie algebra, such that
\begin{itemize}
	\item[\rm(i)] $\g=\huaF_0\g=\huaF_1\g$; 
	\item[\rm(ii)]
	for all $m,n\ge 0$, we have
	$	[\huaF_{m}\g,\huaF_{n}\g]\subset \huaF_{m+n}\g,~~~~\huaF_{m}\g\rhd\huaF_{n}\g\subset \huaF_{m+n}\g$;
	\item[\rm(iii)]
	$\g$ is complete with respect to this filtration, i.e. there is an isomorphism of vector spaces
	$
	\g\cong\underleftarrow{\lim}~\g/\huaF_n\g
	$.
\end{itemize}
\end{pro}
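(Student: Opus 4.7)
The plan is to unpack what it means to be a post-Lie algebraic object in the $\bk$-linear symmetric monoidal category $(\cVec,\hat{\otimes},\bk)$, and then match the resulting structure against (i)--(iii). A post-Lie algebra in any symmetric monoidal category is an object $\g$ equipped with morphisms $[\cdot,\cdot]_\g\colon\g\otimes\g\to\g$ and $\rhd\colon\g\otimes\g\to\g$ satisfying the Lie-algebra axioms together with \meqref{Posta-1}--\meqref{Posta-2} encoded as commutative diagrams. So I would first record that a connected complete post-Lie algebra amounts to a complete vector space $(\g,\huaF_\bullet\g)$ with $\huaF_1\g=\g$, together with morphisms $[\cdot,\cdot]_\g,\rhd\colon\g\hat{\otimes}\g\to\g$ in $\cVec$ satisfying these identities; the connectedness hypothesis is then exactly (i), and the completeness hypothesis is exactly (iii) via the isomorphism \meqref{complete-mor}.

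Next, I would show that (ii) is the translation of the morphism-in-$\cVec$ requirement for $[\cdot,\cdot]_\g$ and $\rhd$. The filtration on $\g\hat{\otimes}\g$ is by construction the completion of the filtration \meqref{FTP} on $\g\otimes\g$; in particular the natural map $\g\otimes\g\to\g\hat{\otimes}\g$ sends $\huaF_i\g\otimes\huaF_j\g$ into $\huaF_{i+j}(\g\hat{\otimes}\g)$, and $\sum_{i+j=n}\huaF_i\g\otimes\huaF_j\g$ has dense image in $\huaF_n(\g\hat{\otimes}\g)$. Using this density together with continuity of maps in $\cVec$, I would deduce that a continuous linear map $\mu\colon\g\hat{\otimes}\g\to\g$ preserves the filtration if and only if $\mu(\huaF_i\g\otimes\huaF_j\g)\subset\huaF_{i+j}\g$ for all $i,j\ge 0$; applied to both $[\cdot,\cdot]_\g$ and $\rhd$ this is precisely (ii). Conversely, given $(\g,\huaF_\bullet\g)$ satisfying (i)--(iii) with post-Lie operations on the underlying vector space, the same compatibility lets the operations extend uniquely and continuously to $\g\hat{\otimes}\g\to\g$, and the post-Lie identities, being equalities of morphisms $\g^{\otimes 3}\to\g$, extend by continuity to $\g^{\hat{\otimes} 3}\to\g$ since the image of $\g^{\otimes 3}$ is dense there.

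The main obstacle I expect is verifying the density/compatibility statement linking the tensor filtration \meqref{FTP} with the completion functor, i.e.\ that $\huaF_n(\g\hat{\otimes}\g)$ is the closure of $\sum_{i+j=n}\huaF_i\g\otimes\huaF_j\g$ and that continuous multilinear operations transport algebraic identities from $\g$ to its completion. This is a standard fact about the symmetric monoidal structure on $\cVec$ worked out in \mcite{Fr}, which I would invoke rather than reprove, keeping the argument a clean unwinding of definitions.
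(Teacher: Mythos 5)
Your proposal is correct and is exactly the definitional unwinding the paper has in mind: the paper states this proposition without proof (``We easily have an explicit description\dots''), and the intended argument is precisely to identify (i) with connectedness, (iii) with the completeness isomorphism \meqref{complete-mor}, and (ii) with the requirement that $[\cdot,\cdot]_\g$ and $\rhd$ be morphisms in $(\cVec,\hat{\otimes},\bk)$ for the tensor filtration \meqref{FTP}, with the density/extension facts about $\hat{\otimes}$ delegated to \mcite{Fr}. Nothing further is needed.
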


We introduce the notion of a nilpotent post-Lie algebra and illustrate the fact that a nilpotent post-Lie algebra is naturally a connected complete  post-Lie algebra.

Let $(\g,[\cdot,\cdot],\rhd)$ be a post-Lie algebra. Define
\begin{eqnarray*}
\g^{[0]}=\g^{[1]}=\g,\,\,\,\,\g^{[2]}=[\g,\g]+\g\rhd\g,\,\,\,\,\g^{[n]}=\sum_{k=1}^{n-1}[\g^{[k]},\g^{[n-k]}]+\sum_{k=1}^{n-1}\g^{[k]}\rhd\g^{[n-k]},\,\,\,\,n=3,\cdots.
\end{eqnarray*}
Then we obtain that  $\g^{[\bullet]}$ is a descending
filtration of the vector space $\g$ and
\begin{eqnarray*}
[\g^{[m]},\g^{[n]}]\subset\g^{[m+n]},\,\,\,\g^{[m]}\rhd\g^{[n]}\subset\g^{[m+n]},\,\,\,\forall m,n\ge 0.
\end{eqnarray*}
Thus $(\g,\g^{[\bullet]})$ is a connected filtered post-Lie algebra.  Conversely, let  $(\g,\huaF_{\bullet}\g)$   be a connected  filtered post-Lie algebra. By $\huaF_{1}\g=\g$, we deduce that $\g^{[n]}\subset \huaF_{n}\g,~\,n\geq 1.$  
\begin{defi}
A post-Lie algebra $(\g,[\cdot,\cdot],\rhd)$ is called {\bf nilpotent} if there is $n\geq 1$ such that $\g^{[n]}=0$.
\end{defi}

\begin{rmk}
Let $(\g,\huaF_{\bullet}\g)$ be a connected complete  post-Lie algebra. By $\g=\huaF_1\g$ and (ii) in Proposition \ref{pronilpoten-post-lie}, we deduce that $\g/\huaF_{n}\g$ is a nilpotent post-Lie algebra. On the other hand, recall that a pronilpotent algebra is the inverse limit of nilpotent algebras. So by (iii) in Proposition \ref{pronilpoten-post-lie},  we find that a connected complete  post-Lie algebra $\g$ is a pronilpotent post-Lie algebra.
\end{rmk}

Let $(\g,[\cdot,\cdot],\rhd)$ be a nilpotent post-Lie algebra. Since there is $n\geq 1$ such that $\g^{[n]}=0$, it follows that $\g=\underleftarrow{\lim}~\g/\g^{[n]}$. Thus the following result is obvious.

\begin{pro}\label{completion-nilpotent-post-Lie}
Let $(\g,[\cdot,\cdot],\rhd)$ be a nilpotent post-Lie algebra. Then  $(\g,\g^{[\bullet]})$ is a connected complete post-Lie algebra.
\end{pro}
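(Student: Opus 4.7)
The plan is to verify the three conditions in Proposition~\ref{pronilpoten-post-lie} for the filtration $\g^{[\bullet]}$ on a nilpotent post-Lie algebra $(\g,[\cdot,\cdot],\rhd)$. The proof is largely a bookkeeping exercise that pieces together what has already been observed in the excerpt, so I do not expect any substantial obstacle; the only point requiring care is the completeness statement.

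First, I would note that conditions (i) and (ii) are immediate from the construction of $\g^{[\bullet]}$ given in the paragraph preceding the definition of nilpotency. Indeed, the very definition sets $\g^{[0]}=\g^{[1]}=\g$, which is (i). The inclusions $[\g^{[m]},\g^{[n]}]\subset\g^{[m+n]}$ and $\g^{[m]}\rhd\g^{[n]}\subset\g^{[m+n]}$ are exactly those written down in the excerpt just before the nilpotency definition, which give (ii). So up to this point $(\g,\g^{[\bullet]})$ is a connected filtered post-Lie algebra whether or not $\g$ is nilpotent.

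The heart of the argument is condition (iii), completeness. Here I would use the hypothesis: let $N\geq 1$ be such that $\g^{[N]}=0$. Because $\g^{[\bullet]}$ is a descending filtration, we have $\g^{[k]}=0$ for every $k\geq N$, so the canonical projections $\g/\g^{[k+1]}\to\g/\g^{[k]}$ are isomorphisms for all $k\geq N$. Consequently the inverse system $\{\g/\g^{[k]}\}_{k\geq 1}$ stabilizes at $k=N$, and the canonical map
\[
\g=\g/\g^{[N]}\ \longrightarrow\ \underleftarrow{\lim}\,\g/\g^{[k]}
\]
is an isomorphism of vector spaces. This is exactly the required completeness isomorphism $\Phi_\g$ from \eqref{complete-mor}, so $(\g,\g^{[\bullet]})$ lies in $\cVec$.

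Combining these three items with the fact that $\g$ already carries a post-Lie algebra structure whose operations are continuous for the trivial reason that they respect the filtration, I conclude that $(\g,\g^{[\bullet]})$ is a connected complete post-Lie algebra in the sense of Proposition~\ref{pronilpoten-post-lie}. The argument is entirely formal, and the only conceptual point to flag is that \emph{completeness in the filtered sense is automatic for nilpotent filtrations} because the inverse limit stabilizes after finitely many steps; no genuine limiting procedure is required.
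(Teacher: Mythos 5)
Your proposal is correct and follows essentially the same route as the paper: the paper likewise notes that $(\g,\g^{[\bullet]})$ is already a connected filtered post-Lie algebra by construction, and then observes that $\g^{[n]}=0$ for some $n$ forces $\g=\underleftarrow{\lim}\,\g/\g^{[k]}$, which is exactly your stabilization argument spelled out in more detail.
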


\begin{ex}
The completion of  $(Lie(\bk\{\huaO\}),\huaF_{\bullet}Lie(\bk\{\huaO\}))$ is a connected complete post-Lie algebra $(\widehat{Lie}(\bk\{\huaO\}),\huaF_{\bullet}\widehat{Lie}(\bk\{\huaO\}))$. More precisely, we have $\widehat{Lie}(\bk\{\huaO\})=\prod_{n=1}^{+\infty}\big(Lie(\bk\{\huaO\})\big)_n$ and
\[
\huaF_{n}\Big(\widehat{Lie}(\bk\{\huaO\})\Big):=\left\{
\begin{array}{ll}
\prod_{n=1}^{+\infty}\big(Lie(\bk\{\huaO\})\big)_n, &n=0, \\
\prod_{k=n}^{+\infty}\big(Lie(\bk\{\huaO\})\big)_k, &n\geq 1.
\end{array}
\right.
\]
\end{ex}

\begin{ex}
We consider the pre-Lie algebra structure on the vector space of power series ring $x^2\bk [[x]]$  given by
$
x^m\rhd x^n=nx^{m+n-1},$ for all $m,n\geq 2.$
Define \[
\huaF_n(x^2\bk [[x]]):=\left\{
\begin{array}{ll}
x^2\bk [[x]], &n=0, \\
x^{n+1}\bk [[x]], &n\geq 1.
\end{array}
\right.
\]
Then, $(x^2\bk [[x]],\huaF_{\bullet}x^2\bk [[x]])$ is a connected complete  pre-Lie algebra.
\end{ex}

\begin{ex}\label{finite-pre-Lie}
Let $p$ be an odd prime number. Similar to the discussion of the above example, $(x^2\mathbb F_p [[x]],\huaF_{\bullet}x^2\mathbb F_p[[x]])$ is a connected complete  pre-Lie algebra. Moreover, for $n\geq 1$, $\huaF_n(x^2\mathbb F_p [[x]])=x^{n+1}\mathbb F_p [[x]]$ is an ideal of the pre-Lie algebra $x^2\mathbb F_p [[x]]$. Thus, we conclude that the space
$$
x^2\mathbb F_p [[x]]/\huaF_n(x^2\mathbb F_p [[x]])\cong\oplus_{k=2}^{n}\mathbb F_px^k
$$
is a  nilpotent pre-Lie algebra of dimension $n-1$ over the finite  field $\mathbb F_p$.
\end{ex}

\begin{defi}
A {\bf complete post-Hopf algebra} $(H,\rhd,\huaF_{\bullet} H)$  is a post-Hopf  algebraic object in the category $(\cVec,\hat{\otimes},\bk)$. A complete Hopf algebra $(H,\rhd,\huaF_{\bullet} H)$ is called {\bf connected} if $H/\huaF_1 H=\bk.$
\end{defi}

\begin{ex}
Let $(\g,\huaF_{\bullet}\g)$ be a complete  post-Lie algebra. Then  $\Big(\big(U(\g),\cdot,\Delta^{\co},\rhd\big),\huaF_{\bullet}U(\g)\Big)$ is a filtered post-Hopf algebra. Since the completion functor $\widehat{(-)}$ is a  symmetric monoidal functor, we deduce that $\Big(\big(\widehat{U}(\g),\cdot,\Delta^{\co},\rhd\big),\huaF_{\bullet}\widehat{U}(\g)\Big)$ is a complete post-Hopf algebra.
\end{ex}

Let $(\g,\huaF_{\bullet}\g)$ be a connected complete  post-Lie algebra. Define the left multiplication $L^\rhd:\g\longrightarrow\gl(\g)$ by $L^\rhd_xy=x\rhd y,$ for all $x,y\in\g.$ By the completeness condition, $\exp(L^\rhd_x)=\sum_{n=0}^{+\infty}\frac{(L^\rhd_x)^n}{n!}\in GL(\g)$
is well-defined for all $x\in\g.$
Since $(\g,\huaF_{\bullet}\g)$ is connected, we deduce that $\Big(\big(\widehat{U}(\g),\cdot,\Delta^{\co},\rhd\big),\huaF_{\bullet}\widehat{U}(\g)\Big)$ is a connected complete post-Hopf algebra.  Forgetting the operation $\rhd$ gives a connected complete Hopf algebra $\Big(\big(\widehat{U}(\g),\cdot,\Delta^{\co}\big),\huaF_{\bullet}\widehat{U}(\g)\Big)$. By the Milnor-Moore
Theorem \cite[Theorem 7.3.26]{Fr} of connected complete Hopf algebras, the Lie algebra of primitive elements $P\big(\widehat{U}(\g),\cdot,\Delta^{\co}\big)$ is exactly $(\g,[\cdot,\cdot]_\g)$.  By \cite[Proposition 8.1.5]{Fr}, the exponential map
\begin{eqnarray}\mlabel{exp-log-1}
\exp:P\big(\widehat{U}(\g),\cdot,\Delta^{\co}\big)\lon G\big(\widehat{U}(\g),\cdot,\Delta^{\co}\big)
\end{eqnarray}  gives a canonical isomorphism from the set of primitive elements to the set of group-like elements of the connected complete Hopf algebra $\Big(\big(\widehat{U}(\g),\cdot,\Delta^{\co}\big),\huaF_{\bullet}\widehat{U}(\g)\Big)$.
Its inverse map is the logarithm function $\log$.

The connected filtered post-Hopf algebra
$\Big(\big(U(\g),\cdot,\Delta^{\co},\rhd\big),\huaF_{\bullet}U(\g)\Big)$
gives rise to the sub-adjacent connected filtered Hopf algebra
$\Big(\big(U(\g),*_\rhd,\Delta^{\co}\big),\huaF_{\bullet}U(\g)\Big)$
as well as the connected complete Hopf algebra $
\Big(\big(\widehat{U}(\g),*_\rhd,\Delta^{\co}\big),\huaF_{\bullet}\widehat{U}(\g)\Big).
$ Denote by $\exp_{*_\rhd}$ and $\log_{*_\rhd}$ the corresponding
exponential map and logarithm map.
 The {\bf post-Lie Magnus expansion} \mcite{CP,CEO,MQS}  is given by
\begin{eqnarray}\mlabel{post-Lie Magnus expansion}
 \Omega:\g\lon \g, \quad \Omega(x):=\log_{*_\rhd}(\exp(x)) \tforall x\in \g.
\end{eqnarray}

Let $(\g,\huaF_{\bullet}\g)$ be a connected complete  Lie algebra. Recall that the Baker-Campbell-Hausdorff formula of $(\g,\huaF_{\bullet}\g)$ is a group structure $\BCH:\g\times\g\lon \g$ on $\g$ which is given by
\begin{eqnarray*}
\BCH(x,y)= \log\big(\exp(x)\cdot\exp(y)\big)\tforall x,y\in\g.
\end{eqnarray*}

Utilizing the post-Lie Magnus expansion and the Baker-Campbell-Hausdorff formula, we obtain the formal integration   of  connected complete  post-Lie algebras.
\begin{thm}\mlabel{integration}
Let $(\g,\huaF_{\bullet}\g)$ be a connected complete  post-Lie algebra. Then there is a post-group structure on the set $\g$, which is given by
\begin{equation}
\mlabel{inte-1}x\cdot y=\BCH(x,y),\quad
x\rhd y=\exp(L^\rhd_{\Omega(x)})(y) \tforall x,y\in \g.
\end{equation}
The post-group $(\g,\cdot,\rhd)$ is called the {\bf formal integration} of the connected complete  post-Lie algebra $(\g,\huaF_{\bullet}\g)$.
\end{thm}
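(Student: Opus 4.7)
The plan is to lift the problem to the connected complete post-Hopf algebra $\widehat{U}(\g)$, extract a post-group on its set of group-like elements via Theorem \mref{group-like}, and transport that structure to $\g$ through the exponential bijection. First I would observe that since $(\g,\huaF_\bullet\g)$ is a connected complete post-Lie algebra, the Foissy-type construction upgrades it to a connected complete post-Hopf algebra $\bigl(\widehat{U}(\g),\cdot,\Delta^{\co},\rhd\bigr)$, as recorded just before the statement. By the Milnor--Moore theorem for connected complete Hopf algebras \cite[Theorem 7.3.26]{Fr}, the exponential map $\exp\colon \g\to G\bigl(\widehat{U}(\g)\bigr)$ for the underlying Hopf algebra $(\widehat{U}(\g),\cdot,\Delta^{\co})$ is a bijection with inverse $\log$; applying the same theorem to the sub-adjacent Hopf algebra $(\widehat{U}(\g),*_\rhd,\Delta^{\co})$, which is again connected complete with the same primitive space $\g$, yields a second bijection $\exp_{*_\rhd}\colon \g\to G\bigl(\widehat{U}(\g)\bigr)$. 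The post-Lie Magnus expansion is then characterised by $\exp_{*_\rhd}(\Omega(x))=\exp(x)$ for all $x\in\g$.

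Next I would invoke (the evident complete version of) Theorem \mref{group-like} to obtain a post-group structure on $G\bigl(\widehat{U}(\g)\bigr)$, and transport it to $\g$ along $\exp$. For the underlying group, the BCH identity $\exp(x)\cdot\exp(y)=\exp(\BCH(x,y))$, valid in any connected complete Hopf algebra, shows that the transported multiplication is precisely $x\cdot y=\BCH(x,y)$, which gives the first formula of \meqref{inte-1}.

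The heart of the argument is computing the transported $\rhd$. By the post-Hopf axiom \meqref{Post-con}, the map $L^\rhd$ is a Hopf-algebra action of the sub-adjacent Hopf algebra $(\widehat{U}(\g),*_\rhd,\Delta^{\co})$ on $\widehat{U}(\g)$, so for any primitive $z\in\g$ of the sub-adjacent Hopf algebra, $L^\rhd_z$ is both an algebra derivation and a coalgebra coderivation on $\widehat{U}(\g)$ that preserves $\g$, while for any group-like element $g$, $L^\rhd_g$ is a Hopf algebra automorphism of $\widehat{U}(\g)$. Functoriality of this action under the exponential maps gives the intertwining identity $L^\rhd_{\exp_{*_\rhd}(z)}=\exp\bigl(L^\rhd_z\bigr)$. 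Specialising to $z=\Omega(x)$ and using $\exp_{*_\rhd}(\Omega(x))=\exp(x)$ gives
\[
\exp(x)\rhd\exp(y)=L^\rhd_{\exp(x)}\bigl(\exp(y)\bigr)=\exp\bigl(L^\rhd_{\Omega(x)}\bigr)\bigl(\exp(y)\bigr)=\exp\Bigl(\exp\bigl(L^\rhd_{\Omega(x)}\bigr)(y)\Bigr),
\]
where the last equality uses that the algebra automorphism $\exp\bigl(L^\rhd_{\Omega(x)}\bigr)$ of $\widehat{U}(\g)$ commutes with the power series $\exp$. Applying $\log$ identifies the transported operation as $x\rhd y=\exp(L^\rhd_{\Omega(x)})(y)$, the second formula of \meqref{inte-1}. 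The post-group axioms \meqref{Post-2} and \meqref{Post-4} for $(\g,\cdot,\rhd)$ then follow automatically from those for $\bigl(G(\widehat{U}(\g)),\cdot,\rhd\bigr)$, since the transport along a bijection preserves post-group structures.

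The main obstacle will be the rigorous verification, in the complete setting, of the intertwining identity $L^\rhd_{\exp_{*_\rhd}(z)}=\exp(L^\rhd_z)$ for primitive $z$, together with the statements that $L^\rhd_z$ is simultaneously an algebra derivation and a coalgebra coderivation on $\widehat{U}(\g)$. These are essentially formal consequences of the post-Hopf axioms \meqref{Post-hopf-2}, \meqref{Post-hopf-4} and the cocommutativity of $\Delta^{\co}$, but need to be justified with care in the filtered/complete topology on $\huaF_\bullet\widehat{U}(\g)$ so that all the exponential series converge and commute with $\Delta^{\co}$ as required. Once these compatibilities are in hand, the rest of the proof is a clean transport argument.
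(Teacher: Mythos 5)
Your proposal is correct and follows essentially the same route as the paper: pass to the connected complete post-Hopf algebra $\widehat{U}(\g)$, apply Theorem \mref{group-like} to get a post-group on its group-like elements, identify those with $\exp(\g)$ via the Milnor--Moore/exp--log correspondence, and transport the structure back to $\g$, with the key identity $\exp_{*_\rhd}(\Omega(x))\rhd y=\exp(L^\rhd_{\Omega(x)})(y)$ (your ``intertwining identity'', an immediate consequence of \meqref{Post-hopf-4} saying $L^\rhd$ is an algebra map from the $*_\rhd$-product to composition) and the group-like/algebra-morphism property \meqref{Post-hopf-2} giving $\exp(x)\rhd\exp(y)=\exp\big(\exp(L^\rhd_{\Omega(x)})(y)\big)$. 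The paper's proof is exactly this computation, so no further comparison is needed.
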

\begin{proof}
Since $\Big(\big(\widehat{U}(\g),\cdot,\Delta^{\co},\rhd\big),\huaF_{\bullet}\widehat{U}(\g)\Big)$ is a connected complete post-Hopf algebra, by Theorem \mref{group-like}, we deduce that $G\big(\widehat{U}(\g),\cdot,\Delta^{\co},\rhd\big)$ is a post-group. By \meqref{exp-log-1}, we obtain
$$
G\big(\widehat{U}(\g),\cdot,\Delta^{\co}\big)=\big\{\exp(x)\,\big| \tforall x\in \g\big\}.
\vspace{-.2cm}
$$
Thus, we have
\begin{eqnarray*}
\exp(x)\cdot\exp(y)=\exp\Big(\log\big(\exp(x)\cdot\exp(y)\big)\Big)=\exp\big(\BCH(x,y)\big) \tforall x,y\in\g.
\end{eqnarray*}
For all $x,y\in\g$, we have
\begin{eqnarray}\mlabel{Magnus expansion-to-inte}
\exp(L^\rhd_{\Omega(x)})(y)=\sum_{n=0}^{+\infty}\frac{(L^\rhd_{\Omega(x)})^n(y)}{n!}\stackrel{\meqref{u-3},\meqref{post-rbb-1}}{=}\exp_{*_\rhd}(\Omega(x))\rhd y\stackrel{\meqref{post-Lie Magnus expansion}}{=}\exp(x)\rhd y.
\end{eqnarray}
Moreover, we have
\begin{eqnarray*}
\exp(x)\rhd \exp(y)&=&\exp(x)\rhd \sum_{n=0}^{+\infty}\frac{y^n}{n!}=\sum_{n=0}^{+\infty}\frac{1}{n!}\exp(x)\rhd y^n\\
&\stackrel{\meqref{Post-hopf-2}}{=}&\sum_{n=0}^{+\infty}\frac{1}{n!}\underbrace{(\exp(x)\rhd y)\cdots(\exp(x)\rhd y)}_n\stackrel{\meqref{Magnus expansion-to-inte}}{=}\exp\Big(\exp(L^\rhd_{\Omega(x)})(y)\Big).
\end{eqnarray*}
By \meqref{exp-log-1}, transporting the post-group structure on $G\big(\widehat{U}(\g),\cdot,\Delta^{\co},\rhd\big)$ to that on the set $\g$, we obtain a post-group $(\g,\cdot,\rhd)$, where $\cdot$ and $\rhd$ are given by \meqref{inte-1}.
\end{proof}

\begin{rmk}
The  integration of nilpotent pre-Lie algebras was studied in
\mcite{Sm22a}. Since a nilpotent pre-Lie algebra is a connected complete pre-Lie algebra, the formal
integration of connected complete  post-Lie algebras extends the one given in \mcite{Sm22a}. Recently, there were  fruitful investigations on the relationship between finite braces and pre-Lie rings, see \cite{Sm22b,Sm22c,Sm22d} for more details. On the other hand, the
integration of post-Lie algebras was studied in \mcite{MQS,MQ} using $D$-bialgebras and post-Lie Magnus
expansions.
We use post-Hopf algebras and post-groups to study the integration of connected complete  post-Lie algebras.
\end{rmk}

The main tool used in our formal integration procedure \meqref{inte-1} is the  Baker-Campbell-Hausdorff
formula and the post-Lie Magnus expansion. On the other hand, it is known that the Baker-Campbell-Hausdorff formula also holds for nilpotent Lie algebras with orders $p^n$ and finite $p$-groups with orders $p^n$ provided $p > n + 1$. Applying this formula, various results on the relations between finite braces and pre-Lie algebras were obtained. Comparing the two approaches suggests the following question:

\smallskip

\noindent
{\bf Problem.} Find suitable constructions of ``differentiation'' and ``formal integration'' for finite post-groups and post-Lie algebras.

\smallskip

We end this subsection with providing a class of connected complete pre-Lie algebras from operads and giving their  formal integrations.

For an operad $\huaP$, a pre-Lie algebra structure \cite[Theorem 1.7.3]{KM} on $\prod_{n=1}^{+\infty}\huaP(n)_{\mathbb S_{n}}$ is given by
\vspace{-.3cm}
\begin{eqnarray}\mlabel{operad-to-pre-lie}
(\bar{a}\rhd\bar{b})_n=\sum_{k=1}^{n}\sum_{i=1}^{k}\overline{\gamma(b_{k};\Id,\ldots,a_{n+1-k},\ldots,\Id)}.
\end{eqnarray}
Consider the pre-Lie subalgebra $\widehat{\huaP}^+_{\mathbb S}\coloneqq \prod_{n=2}^{+\infty}\huaP(n)_{\mathbb S_{n}}$ and define a descending
filtration on $\widehat{\huaP}^+_{\mathbb S}$ by
\[
\huaF_k(\widehat{\huaP}^+_{\mathbb S}):=\left\{
\begin{array}{ll}
\prod_{n=2}^{+\infty}\huaP(n)_{\mathbb S_{n}}, &\mbox {$k=0$, }\\
\prod_{n=k+1}^{+\infty}\huaP(n)_{\mathbb S_{n}}, &\mbox {$k\ge 1$.}
\end{array}
\right.
\]

\begin{pro}\mlabel{operad-to-complete-pre-lie}
 With the above notations, $\big(\widehat{\huaP}^+_{\mathbb S},\huaF_{\bullet}(\widehat{\huaP}^+_{\mathbb S})\big)$ is a connected complete  pre-Lie algebra.
\end{pro}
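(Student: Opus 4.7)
The plan is to verify the three defining conditions of a connected complete pre-Lie algebra (equivalently, the conditions in Proposition~\mref{pronilpoten-post-lie} applied to the abelian post-Lie case, where the Lie bracket condition is vacuous). The pre-Lie axiom for the full product $\prod_{n=1}^{+\infty}\huaP(n)_{\mathbb S_n}$ under $\rhd$ defined by~\meqref{operad-to-pre-lie} is \cite[Theorem 1.7.3]{KM}, so I would first check that $\widehat{\huaP}^+_{\mathbb S}$ is a pre-Lie subalgebra. Since any $\bar{a},\bar{b}\in\widehat{\huaP}^+_{\mathbb S}$ have $a_1=b_1=0$, specializing~\meqref{operad-to-pre-lie} at $n=1$ gives $(\bar{a}\rhd\bar{b})_1 = \overline{\gamma(b_1;a_1)} = 0$, so $\bar{a}\rhd\bar{b}$ again lies in $\widehat{\huaP}^+_{\mathbb S}$.

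The key step is the filtration compatibility $\huaF_m(\widehat{\huaP}^+_{\mathbb S})\rhd\huaF_n(\widehat{\huaP}^+_{\mathbb S})\subset \huaF_{m+n}(\widehat{\huaP}^+_{\mathbb S})$ for all $m,n\ge 0$. Unwinding the definition (in which $\huaF_0 = \huaF_1$), elements $\bar{a}\in\huaF_m$ and $\bar{b}\in\huaF_n$ satisfy $a_j = 0$ for all $j\le\max(m,1)$ and $b_k = 0$ for all $k\le\max(n,1)$. A summand of $(\bar{a}\rhd\bar{b})_l$ in~\meqref{operad-to-pre-lie} can be nonzero only if simultaneously $k\ge\max(n,1)+1$ and $l+1-k\ge\max(m,1)+1$, which adds up to $l\ge\max(m,1)+\max(n,1)+1\ge m+n+1$. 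Hence $(\bar{a}\rhd\bar{b})_l=0$ for all $l\le m+n$, placing $\bar{a}\rhd\bar{b}\in\huaF_{m+n}$ as required.

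Connectedness is immediate from the definition: $\huaF_1(\widehat{\huaP}^+_{\mathbb S}) = \prod_{n=2}^{+\infty}\huaP(n)_{\mathbb S_n} = \widehat{\huaP}^+_{\mathbb S}$. Completeness is essentially formal: the quotient $\widehat{\huaP}^+_{\mathbb S}/\huaF_k(\widehat{\huaP}^+_{\mathbb S})$ is canonically identified with the finite product $\prod_{n=2}^{k}\huaP(n)_{\mathbb S_n}$, and the infinite direct product $\prod_{n=2}^{+\infty}\huaP(n)_{\mathbb S_n}$ is tautologically the inverse limit of these truncations along the canonical projections. The main obstacle is the index bookkeeping in the filtration step, where the two-case definition of $\huaF_k$ at $k=0$ versus $k\ge 1$ (with $\huaF_0=\huaF_1$) needs to be handled cleanly so that the threshold inequalities combine correctly in all edge cases.
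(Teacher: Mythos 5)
Your proposal is correct and follows essentially the same route as the paper's proof: verify the filtration compatibility $\huaF_m\rhd\huaF_n\subset\huaF_{m+n}$ directly from the index structure of \eqref{operad-to-pre-lie}, then note that connectedness and completeness are immediate from the definition of $\huaF_\bullet$ and the identification of $\widehat{\huaP}^+_{\mathbb S}$ with the inverse limit of its truncations. The paper simply asserts the filtration step without the explicit threshold bookkeeping, which you supply correctly (including the $\huaF_0=\huaF_1$ edge case).
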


\begin{proof}
By \meqref{operad-to-pre-lie}, for all $\bar{a}\in \huaF_{k}(\widehat{\huaP}^+_{\mathbb S}),~\bar{b}\in \huaF_{l}(\widehat{\huaP}^+_{\mathbb S})$,   we have $\bar{a}\rhd\bar{b}\in  \huaF_{k+l}(\widehat{\huaP}^+_{\mathbb S})$. Therefore,
\begin{eqnarray*}
\huaF_{k}(\widehat{\huaP}^+_{\mathbb S})\rhd \huaF_l(\widehat{\huaP}^+_{\mathbb S})\subset \huaF_{k+l}(\widehat{\huaP}^+_{\mathbb S}),\,\,\,\,k,l=1,2,\ldots,
\end{eqnarray*}
which implies that  $(\widehat{\huaP}^+_{\mathbb S},\huaF_{\bullet}(\widehat{\huaP}^+_{\mathbb S}))$ is a filtered pre-Lie algebra. Moreover, since $\huaP^+_{\mathbb S}=\huaF_1(\huaP^+_{\mathbb S})$ and $\widehat{\huaP}^+_{\mathbb S}\cong\underleftarrow{\lim}\widehat{\huaP}^+_{\mathbb S}/\huaF_n (\widehat{\huaP}^+_{\mathbb S})$, we deduce that $(\widehat{\huaP}^+_{\mathbb S},\huaF_{\bullet}(\widehat{\huaP}^+_{\mathbb S}))$ is a connected complete pre-Lie algebra.
\end{proof}

Theorem \mref{integration} and Proposition \mref{operad-to-complete-pre-lie} give a formal integration of the above pre-Lie algebra:

\begin{thm}\mlabel{O-to-pre-Lie-algebra-to-pre-group-cor}
Let $\huaP$ be an operad. Then there is a pre-group structure on the set $\widehat{\huaP}^+_{\mathbb S}$  given by
\vspace{-.1cm}
$$
x\cdot y=x+y,\quad
x\rhd y=\exp(L^\rhd_{\Omega(x)})(y) \tforall x,y\in \widehat{\huaP}^+_{\mathbb S}.
$$
\end{thm}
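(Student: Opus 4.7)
The plan is to derive this corollary directly from Theorem~\mref{integration} by recognizing that a pre-Lie algebra is precisely a post-Lie algebra whose underlying Lie bracket is trivial.

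First, by Proposition~\mref{operad-to-complete-pre-lie}, the pair $\big(\widehat{\huaP}^+_{\mathbb S}, \huaF_\bullet(\widehat{\huaP}^+_{\mathbb S})\big)$ is a connected complete pre-Lie algebra. Viewing this pre-Lie algebra as a post-Lie algebra with zero Lie bracket $[\cdot,\cdot]_\g = 0$, I can invoke Theorem~\mref{integration} to transport the pre-Lie structure to a post-group structure on the underlying set $\widehat{\huaP}^+_{\mathbb S}$, with operations given by $x\cdot y = \BCH(x,y)$ and $x\rhd y = \exp(L^\rhd_{\Omega(x)})(y)$.

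The key simplification is that when the Lie bracket vanishes, the Baker–Campbell–Hausdorff formula collapses to ordinary addition: $\BCH(x,y) = x + y$ for all $x,y \in \widehat{\huaP}^+_{\mathbb S}$. This is immediate from the BCH expansion, whose higher-order terms are iterated commutators. Consequently, $(\widehat{\huaP}^+_{\mathbb S},\cdot)$ is an abelian group, and the resulting post-group is in fact a pre-group in the sense of Section~\mref{post-group}. The formula for $\rhd$ is already in the form asserted by the statement.

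There is no genuine obstacle here; the statement is essentially a specialization of the integration theorem to the pre-Lie setting combined with the explicit description of the connected complete pre-Lie algebra attached to an operad. The only point worth verifying is that $\Omega(x)$ and $\exp(L^\rhd_{\Omega(x)})(y)$ make sense in $\widehat{\huaP}^+_{\mathbb S}$, which follows from the completeness established in Proposition~\mref{operad-to-complete-pre-lie} together with the filtration property $\huaF_k \rhd \huaF_l \subset \huaF_{k+l}$; this guarantees convergence of both the post-Lie Magnus expansion~\meqref{post-Lie Magnus expansion} and the exponential series defining $\exp(L^\rhd_{\Omega(x)})$.
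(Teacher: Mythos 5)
Your proposal is correct and follows exactly the route the paper takes: the paper derives this theorem as an immediate consequence of Theorem~\mref{integration} applied to the connected complete pre-Lie algebra of Proposition~\mref{operad-to-complete-pre-lie}, with the abelian group law $x\cdot y=x+y$ arising because $\BCH$ reduces to addition when the Lie bracket vanishes. Your additional remarks on convergence of $\Omega$ and $\exp(L^\rhd_{\Omega(x)})$ are consistent with the completeness hypotheses already built into those results.
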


\subsection{Post-groups and Lie-Butcher groups}
We show that there is a post-group structure underlying the Lie-Butcher group. We also give an explicit description of the Lie-Butcher group structure using the above formal integration.

Since $\Big((Lie(\bk\{\huaO\}),[\cdot,\cdot],\rhd),\huaF_{\bullet}Lie(\bk\{\huaO\})\Big)$ is a connected filtered post-Lie algebra,   the universal enveloping algebra
$\Big((\bk\langle \huaO\rangle,\cdot,\Delta^{\co},\rhd),\huaF_{\bullet}\bk\langle \huaO\rangle\Big)$ is a connected filtered post-Hopf algebra. Its completion is the connected complete post-Hopf algebra $$\Big((\bk\langle\langle\huaO\rangle\rangle,\cdot,\Delta^{\co},\rhd),\huaF_{\bullet}\bk\langle\langle\huaO\rangle\rangle\Big).$$ Its elements are called {\bf Lie-Butcher series}.
The group of group-like elements of the sub-adjacent complete Hopf algebra $\Big((\bk\langle\langle\huaO\rangle\rangle,*_\rhd,\Delta^{\co}),\huaF_{\bullet}\bk\langle\langle\huaO\rangle\rangle\Big)$ given in Theorem \mref{thm:subHopf} is called the {\bf Lie-Butcher group} \cite{MF}, and denoted by $G_{\LB}$.

\begin{thm}\mlabel{thm:LB}
The set of the group-like elements $G(\bk\langle\langle\huaO\rangle\rangle,\cdot,\Delta^{\co},\rhd)$ of the complete post-Hopf algebra $(\bk\langle\langle\huaO\rangle\rangle,\cdot,\Delta^{\co},\rhd)$ is a post-group. Moreover,
the sub-adjacent  group  of the post-group $G(\bk\langle\langle\huaO\rangle\rangle,\cdot,\Delta^{\co},\rhd)$ is the  Lie-Butcher group $G_{\LB}$, giving  the commutative diagram
  $$\small{
 \xymatrix{\text{complete post-Hopf alg.}~(\bk\langle\langle\huaO\rangle\rangle,\cdot,\Delta^{\co},\rhd)  \ar[rr]^{\qquad sub\mbox{-}adjacent}\ar[d]_{\text{group-like}}^{\text{elements}} &  & \text{complete Hopf alg.}~ (\bk\langle\langle\huaO\rangle\rangle,*_\rhd,\Delta^{\co}) \ar[d]_{\text{group-like}}^{\text{elements}}  \\
\text{post-group}~G(\bk\langle\langle\huaO\rangle\rangle,\cdot,\Delta^{\co},\rhd)  \ar[rr]^{\qquad sub\mbox{-}adjacent} & &\mbox{Lie-Butcher group}~ G_{\LB}.}
}
 $$
\end{thm}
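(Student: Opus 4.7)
The strategy is to combine two tools already established in the paper: Theorem \ref{group-like}, which extracts a post-group from any post-Hopf algebra by passing to group-like elements, and Theorem \ref{thm:subHopf}, which produces the sub-adjacent Hopf algebra of a post-Hopf algebra. Since the discussion just preceding the theorem identifies $(\bk\langle\langle\huaO\rangle\rangle,\cdot,\Delta^{\co},\rhd)$ as a connected complete post-Hopf algebra, the verification in Theorem \ref{group-like} applies directly to its set of group-like elements $G(\bk\langle\langle\huaO\rangle\rangle,\cdot,\Delta^{\co},\rhd)$: cocommutativity together with $\rhd$ being a coalgebra morphism force $\Delta^{\co}(x\rhd y)=(x\rhd y)\hat{\otimes}(x\rhd y)$, so the set is closed under $\rhd$; the identities \eqref{Post-hopf-2} and \eqref{Post-hopf-4} collapse to the post-group axioms \eqref{Post-2} and \eqref{Post-4} once $\Delta^{\co}(x)=x\hat{\otimes} x$; and the convolution inverse $\beta$ of $L^\rhd$ supplies $a^\dagger$ in the sub-adjacent group, since $L^\rhd_x\circ\beta_x=\beta_x\circ L^\rhd_x=\varepsilon(x)\Id=\Id$ for group-like $x$.

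For the identification of the sub-adjacent group, I would compare the two products on the same underlying set of group-like elements. Theorem \ref{pro:subad} gives the sub-adjacent group operation on the post-group by $x\circ y = x\cdot(x\rhd y)$, while the formula \eqref{post-rbb-1} defining the sub-adjacent Hopf algebra product reads $x*_\rhd y = \sum_{x}x_{(1)}\cdot(x_{(2)}\rhd y)$. When $x$ is group-like, $\Delta^{\co}(x)=x\hat{\otimes} x$ collapses this sum to $x*_\rhd y = x\cdot(x\rhd y)$, so $\circ$ and $*_\rhd$ agree on $G(\bk\langle\langle\huaO\rangle\rangle,\cdot,\Delta^{\co},\rhd)$. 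Since the property of being group-like depends only on the coalgebra structure, and the coproduct $\Delta^{\co}$ is unchanged when passing to the sub-adjacent Hopf algebra, the underlying set of $G(\bk\langle\langle\huaO\rangle\rangle,\cdot,\Delta^{\co},\rhd)$ coincides with $G(\bk\langle\langle\huaO\rangle\rangle,*_\rhd,\Delta^{\co})$, which is $G_{\LB}$ by definition. Hence the sub-adjacent group of the post-group is exactly the Lie-Butcher group, and this is precisely the commutativity asserted by the square in the statement.

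I do not foresee any serious obstacle, but the one point I would spell out carefully is that Theorems \ref{group-like} and \ref{thm:subHopf} are stated for ordinary post-Hopf algebras. I would therefore check once at the outset that in the complete setting the relevant sums converge in the filtration topology on $\bk\langle\langle\huaO\rangle\rangle$. This is immediate: both $\rhd$ and $\Delta^{\co}$ preserve the filtration $\huaF_\bullet\bk\langle\langle\huaO\rangle\rangle$, so the convolution inverse $\beta$ of $L^\rhd$ can be constructed term-by-term modulo $\huaF_n$ and assembled using completeness, and all the identities used in the proofs of Theorems \ref{group-like} and \ref{thm:subHopf} transfer to the complete tensor product $\hat{\otimes}$ verbatim.
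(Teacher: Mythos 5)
Your proposal is correct and follows essentially the same route as the paper, whose proof simply invokes Theorem \mref{group-like} directly; your additional details (the collapse of $x*_\rhd y=\sum_x x_{(1)}\cdot(x_{(2)}\rhd y)$ to $x\cdot(x\rhd y)$ on group-like elements, and the observation that group-likeness depends only on the unchanged coproduct $\Delta^{\co}$) are exactly the computations implicit in that one-line proof. The remark about transferring Theorems \mref{group-like} and \mref{thm:subHopf} to the complete setting is a reasonable point of care, though the paper treats this as automatic since the constructions are carried out as algebraic objects in the symmetric monoidal category $(\cVec,\hat{\otimes},\bk)$.
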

\begin{proof}
  It follows from  Theorem \mref{group-like} directly.
\end{proof}

By Theorem \mref{integration} and the fact that $P(\bk\langle\langle\huaO\rangle\rangle,\cdot,\Delta^{\co})=\widehat{Lie}(\bk\{\huaO\})$, we have the following concrete characterization of the post-group $G(\bk\langle\langle\huaO\rangle\rangle,\cdot,\Delta^{\co},\rhd)$ and Lie-Butcher group $G_{\LB}$.
\begin{thm}
The set of group-like elements $G(\bk\langle\langle\huaO\rangle\rangle)$ is equal to $\{\exp(x)\,|\,  x\in \widehat{Lie}(\bk\{\huaO\})\}$ and the post-group structure is given by
\begin{eqnarray*}
\exp(x)\cdot \exp(y)&=&\exp\big(\BCH(x,y)\big),\\
\exp(x)\rhd \exp(y)&=&\exp\Big(\exp(L^\rhd_{\Omega(x)})(y)\Big) \tforall x,y\in \widehat{Lie}(\bk\{\huaO\}).
\end{eqnarray*}
Moreover, the Lie-Butcher group $G_{\LB}=\big\{\exp(x)\,\big| x\in \widehat{Lie}(\bk\{\huaO\})\big\}$ is given by
\begin{eqnarray*}
\exp(x)*_\rhd \exp(y)=\exp\Big(\BCH\big(x,\exp(L^\rhd_{\Omega(x)})(y)\big)\Big) \tforall x,y\in \widehat{Lie}(\bk\{\huaO\}).
\end{eqnarray*}
\end{thm}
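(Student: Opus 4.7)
The plan is to recognize $(\bk\langle\langle\huaO\rangle\rangle,\cdot,\Delta^{\co})$ as the completed universal enveloping algebra $\widehat{U}(Lie(\bk\{\huaO\}))$ of the free post-Lie algebra from Example \mref{free-post-Lie}, and then read off the three formulas from the integration theorem (Theorem~\mref{integration}) applied to the connected complete post-Lie algebra $\widehat{Lie}(\bk\{\huaO\})$. In particular I never have to touch the planar rooted trees combinatorially; everything is extracted from general statements already established earlier in the paper.

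First I would set up the bijection $G(\bk\langle\langle\huaO\rangle\rangle)=\exp\big(\widehat{Lie}(\bk\{\huaO\})\big)$. Forgetting $\rhd$, the triple $(\bk\langle\langle\huaO\rangle\rangle,\cdot,\Delta^{\co})$ is a connected complete cocommutative Hopf algebra, and the preceding discussion identifies its primitive Lie algebra with $\widehat{Lie}(\bk\{\huaO\})$. The Milnor--Moore theorem in the completed setting (\cite[Thm.~7.3.26]{Fr}) together with \cite[Prop.~8.1.5]{Fr}, already cited around \meqref{exp-log-1}, then shows that $\exp:\widehat{Lie}(\bk\{\huaO\})\to G(\bk\langle\langle\huaO\rangle\rangle)$ is a bijection with inverse $\log$.

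Next I would derive the post-group formulas. Theorem~\mref{group-like} tells me that $G(\bk\langle\langle\huaO\rangle\rangle,\cdot,\Delta^{\co},\rhd)$ is a post-group, and Theorem~\mref{integration}, applied to the connected complete post-Lie algebra $\big(\widehat{Lie}(\bk\{\huaO\}),\huaF_{\bullet}\big)$, produces a post-group on $\widehat{Lie}(\bk\{\huaO\})$ whose operations are $\BCH(x,y)$ and $\exp(L^\rhd_{\Omega(x)})(y)$. The proof of Theorem~\mref{integration} is precisely the transport of the post-group structure along $\exp$, so the very same calculation (see \meqref{Magnus expansion-to-inte} and the line after it) yields
\[
\exp(x)\cdot\exp(y)=\exp(\BCH(x,y)),\qquad
\exp(x)\rhd\exp(y)=\exp\big(\exp(L^\rhd_{\Omega(x)})(y)\big),
\]
for all $x,y\in\widehat{Lie}(\bk\{\huaO\})$.

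Finally, for the Lie-Butcher group $G_{\LB}$, I invoke Theorem~\mref{thm:LB} which identifies $G_{\LB}$ with the sub-adjacent group of the post-group $G(\bk\langle\langle\huaO\rangle\rangle,\cdot,\Delta^{\co},\rhd)$. Since $\exp(x)$ is group-like, the formula \meqref{post-rbb-1} collapses to $\exp(x)*_\rhd\exp(y)=\exp(x)\cdot(\exp(x)\rhd\exp(y))$, and substituting the two formulas from the previous paragraph gives
\[
\exp(x)*_\rhd\exp(y)=\exp\!\Big(\BCH\big(x,\exp(L^\rhd_{\Omega(x)})(y)\big)\Big).
\]
The only real subtlety is justifying that all of the exponentials, logarithms, $\BCH$, $\Omega$, and $\exp(L^\rhd_{\Omega(x)})$ converge in the respective completions; this is handled by the connectedness $\huaF_1\widehat{Lie}(\bk\{\huaO\})=\widehat{Lie}(\bk\{\huaO\})$ and the filtration compatibilities already recorded in Example~\mref{free-post-Lie}, so no new analytic work is required.
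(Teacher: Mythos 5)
Your proposal is correct and follows essentially the same route as the paper, which presents this theorem as a direct consequence of Theorem \mref{integration} together with the identification $P(\bk\langle\langle\huaO\rangle\rangle,\cdot,\Delta^{\co})=\widehat{Lie}(\bk\{\huaO\})$ and the sub-adjacency of $G_{\LB}$ from Theorem \mref{thm:LB}. Your derivation of the $*_\rhd$ formula from \meqref{post-rbb-1} on group-like elements is exactly the intended argument.
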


\noindent
{\bf Acknowledgments.}  This work is supported by
NSFC (11931009,11922110,12001228,12271265), Sino-Russian Mathematical Center and the Fundamental Research Funds for the Central Universities and Nankai Zhide Foundation. We give our warmest thanks to the referee for the very helpful suggestions that improve the paper.

\noindent
{\bf Declaration of interests. } The authors have no conflicts of interest to disclose.

\noindent
{\bf Data availability. } Data sharing is not applicable to this article as no new data were created or analyzed in this study.

\end{document}